\documentclass[11pt]{article}
\usepackage[plain,vlined]{algorithm2e}
\usepackage{amsmath}
\usepackage{amssymb}
\usepackage{amsthm}
\usepackage[english]{babel}
\usepackage{calc}
\usepackage{changepage}
\usepackage{cite}
\usepackage{color}
\usepackage{comment}
\usepackage{dsfont}
\usepackage[thinc]{esdiff}
\usepackage{enumitem}
\usepackage{fancyhdr}
\usepackage[T1]{fontenc}
\usepackage[margin=0.8in,letterpaper,top=2cm, bottom=2cm]{geometry} % decreases margins
\usepackage{graphicx} 
\usepackage[final]{hyperref} 
\usepackage[utf8]{inputenc}
\usepackage{latexsym}
\usepackage{longtable}
\usepackage{lmodern}
\usepackage{mathrsfs}
\usepackage{mathtools}
\usepackage{mleftright}
\usepackage{multicol}
\usepackage{subcaption}
\usepackage{tikz}
\usetikzlibrary{arrows,automata,positioning}
\usetikzlibrary{arrows.meta}
\usetikzlibrary{positioning}
\usepackage[normalem]{ulem}
\usepackage{url}
\usepackage{xypic}
\usepackage{xcolor}
\usepackage{xspace}

\newtheorem{theorem}{Theorem}[section]
\newtheorem{corollary}[theorem]{Corollary}
\newtheorem{definition}[theorem]{Definition}
\newtheorem{prop}[theorem]{Proposition}
\newtheorem{lemma}[theorem]{Lemma}
\newtheorem{remark}[theorem]{Remark}
\newtheorem{example}[theorem]{Example}

\usepackage{xcolor}

\newcommand{\dom}{\text{dom}}

\newcommand{\cP}{\mathcal{P}}

\newcommand{\cG}{\mathcal{G}}

\newcommand{\cF}{\mathcal{F}}

\newcommand{\R}{\mathbb{R}}
\newcommand{\N}{\mathbb{N}}
\newcommand{\CAT}{\mathrm{CAT}}
\newcommand{\vertiii}[1]{{\left\vert\kern-0.25ex\left\vert\kern-0.25ex\left\vert #1 
		\right\vert\kern-0.25ex\right\vert\kern-0.25ex\right\vert}}

\newcommand{\proj}{\mathscr{P}}

\DeclareMathOperator{\Var}{Var}

\DeclareMathOperator{\prox}{prox}

\DeclareMathOperator*{\argmin}{arg\,min}
\DeclareMathOperator{\dist}{dist}

\theoremstyle{plain}

\theoremstyle{definition}
\newtheorem*{definition*}{Definition}
\newtheorem*{remark*}{Remark}

\newcommand{\eps}{\varepsilon}

\renewcommand{\d}{\mathrm{d}}

\newcommand{\supp}{\text{\rm supp}}

\newcommand{\Ent}{{\rm Ent}}
\renewcommand{\P}{\mathcal{P}}

\newcommand{\simone}[1]{{\color{blue}[Simo:   #1]}}
\title{\textbf{On the stability of proximal operators in Wasserstein spaces under different notions of convexity}}
\author{
	Simone Di Marino\footnotemark[1]\thanks{Dipartimento di Matematica, Università di Genova, Via Dodecaneso 35, 16146 Genova, Italy.}
	\thanks{\texttt{simone.dimarino@unige.it}} \and 
Sara Farinelli\footnotemark[1]
	\thanks{\texttt{sara.farinelli@edu.unige.it}}\and
	Emanuele Naldi\footnotemark[1]
    \thanks{\texttt{emanuele.naldi@edu.unige.it}} 
	}

\date{\today}

\begin{document}
\maketitle

\begin{abstract}
\noindent The proximal operator  is a fundamental tool in variational analysis and optimization. In the setting of a Hilbert space, given a proper, lower semicontinuous convex functional, its proximal operator is non-expansive, that is, $1$-Lipschitz continuous. In the Wasserstein setting, the contraction properties of this operator have been investigated from different perspectives by Carlen and Craig and  Adve and Mészáros, among others, and are not completely understood. In this paper, we study the stability properties of proximal maps, with a particular focus on non-expansivity, under various notions of convexity of the functional that can be considered in the Wasserstein space.
\end{abstract}
\tableofcontents

%%%%%%%%%%%%%%%%%%%%%%%%%%%%%%%%%%%%INTRO 
%%%%%%%%%%%%%%%%%%%%%%%%%%%%%%%%
\section*{Introduction}

The proximal operator is a fundamental tool in variational analysis and optimization. In the setting of a Hilbert space $\mathcal{H}$, given a proper, lower semicontinuous, and convex functional $\cF: \mathcal{H} \to (-\infty, +\infty]$, the proximal map, defined by
\begin{equation}
    \prox_{\tau \cF}(x) \coloneqq \argmin_{y \in \mathcal{H}} \left\{ \cF(y) + \frac{1}{2\tau}\|x-y\|^2 \right\},
\end{equation}
enjoys several strong regularity properties. Key among these is nonexpansivity: the map is $1$-Lipschitz continuous, satisfying $\|\prox_{\tau \cF}(x) - \prox_{\tau \cF}(y)\| \leq \|x-y\|$ for all $x, y \in \mathcal{H}$. {This stability is one of the cornerstones for proving  the convergence of proximal algorithms and discrete approximations of gradient flows.}

When moving from general Hilbert spaces to the Wasserstein space of probability measures $\cP_2(\R^d)$ endowed with the $W_2$ metric, the situation becomes more intricate. The Wasserstein proximal map, which drives the Jordan-Kinderlehrer-Otto scheme (\cite{JKO},\cite{AGS08}), is defined as
\begin{equation}
    \prox_{\tau \cF}^W(\mu) \coloneqq \argmin_{\nu \in \cP_2(\R^d)} \left\{ \cF(\nu) + \frac{1}{2\tau}W_2^2(\mu,\nu) \right\}.
\end{equation}
On the one hand, in the one-dimensional case, it is well-known that the $2$-Wasserstein space $(\cP_2(\R), W_2)$ is isometric to a convex subset of the Hilbert space $L^2([0,1];\R)$ and so is a $\CAT(0)$ space (see \cite{Ot}). By exploiting this geometric property, it follows that the proximal map of geodesically convex functionals is non-expansive (see \cite{Jo}). On the other hand, for $d > 1$, the space $W_2(\R^d)$ is no longer $\CAT(0)$ (see \cite{Ot}), and whether the proximal operator of a geodesically convex functional retains this $1$-Lipschitz property is a notoriously open question. This has been explicitly stated as an open problem in several seminal works, notably by Ambrosio, Gigli, and Savaré \cite[Remark 4.0.5 (c)]{AGS08}, Carlen and Craig \cite{CarlenCraig2013}, and Craig \cite{Craig2016}, who notes that ``it is unknown if such a contraction holds in the Wasserstein case.'' While true non-expansivity   remains unproven, some partial results exist. ``Almost'' contraction inequalities have been established for functionals which are convex along generalized geodesics by incorporating the variation of the functional itself into a modified distance \cite[Lemma 4.2.4]{AGS08}, \cite[Theorem 1.3]{CarlenCraig2013}: they prove that $\Delta_{i,\tau} (\prox_{\tau \cF}^W(\mu), \prox_{\tau \cF}^W(\nu)) \leq  \Delta_{i,\tau}(\mu, \nu) $, where 
\begin{equation}\label{eqn:DeltaAGS} \Delta_{1,\tau}(\mu, \nu) =  W_2^2(\mu, \nu) + \tau (\cF(\mu)+\cF(\nu)),
\end{equation}
\begin{equation}\label{eqn:DeltaCC}
\Delta_{2,\tau}(\mu, \nu) =  W_2^2(\mu, \nu) + \tau^2 (|\nabla \cF|^2(\mu)+|\nabla \cF|^2(\nu)).
\end{equation}
While these estimates, when used iteratively, provide non-expansivity   of the gradient flow in the regime $\tau \to 0$, they degenerate when the two measures are close. %in particular they do not privide stability informations in this case. 
The regularity of $\prox_{\tau \cF}^W$ is closely related to the specific notion of convexity satisfied by the functional $\cF$ and to recover full nonexpansivity, one must require stronger, more restrictive notions of convexity.\\

\begin{comment}
The objective of this paper is to understand the stability properties of the proximal map, with a particular focus on non-expansivity  , under different notions of convexity of the functional. Here we collect the main results of our work:
\end{comment}

In light of these considerations, the objective of this paper is to understand the stability properties of the proximal map, with a particular focus on non-expansivity, under different notions of convexity of the functional. We underline that every main result of the manuscript is true in $\P_2(\mathcal{H})$ for an Hilbert space $\mathcal{H}$; we decided to focus the exposition on $\P_2(\R^d)$ for simplicity and because it is the one of interest in the applications. Our main contributions can be summarized as follows:

\begin{itemize}
    \item we provide a proof that for totally convex functionals,  the proximal map is non-expansive (this fact was already mentioned in \cite{DiNaVi}). This class of functionals which includes potential energies and interaction energies, is however rather rigid; in fact, using a monotonicity property with respect to the convex order, we show that neither are internal energy functionals totally convex, nor can they be approximated by totally convex functionals;

    \item we explore the concept of weak non-expansivity  , {introduced in \cite{adve2020nonexpansiveness}}, which holds for every geodesically convex functional: thanks to it we can prove that if one of the measures is $\delta$-close to a Dirac delta  or a minimizer of the functional then the proximal map is $(1+\delta)$-Lipschitz;
    \item we introduce the new concept of convexity along \emph{2-base generalized geodesics}: under this assumption the proximal map is non-expansive;
    \item we extend a result of Roudneff-Chupin, proving that for any functional convex along generalized geodesics, the proximal map is locally $\frac 12$-H\"{o}lder.
\end{itemize}

While the study of non-expansiveness is  naturally motivated by its  role in the convergence of proximal algorithms, different notions of stability are equally relevant; for instance, Lipschitz continuity is a key assumption in the convergence of the method presented in \cite{Fe}. 

The paper is organized as follows. In Section~\ref{sec:prelim}, we introduce the main notation and concepts used throughout the paper, specifically the definitions of convexity in the Wasserstein space, the proximal operator and the convex order of measures.

In Section~\ref{sec:totallyconvex} we investigate the class of \emph{totally convex} functionals, recently introduced by Cavagnari, Savaré, and Sodini \cite{CSS_2023} (see also \cite{PiSa}). A functional is totally convex if it is convex along any coupling, a condition strictly stronger than classical notions of convexity, namely standard convexity and convexity along generalized geodesics. It has already been observed that this strengthened convexity assumption can bring many Hilbert space properties to the convex functional. The recent development of optimization methods exploiting total convexity
\cite{Tanaka2023,BonetDC2026} further motivates the study of this class of functionals. In Section~\ref{seubsec:totconv-ne} we prove that it is indeed sufficient to guarantee that the Wasserstein proximal map is non-expansive (1-Lipschitz). In Section~\ref{seubsec:totconv-ne} we prove that the Wasserstein proximal map for totally convex functions is non-expansive (1-Lipschitz). In Section~\ref{subsec:totconv-mon} we prove that certain functionals defined via the convex order are totally convex, and, more importantly, that every totally convex functional is monotonically increasing with respect to the convex order $\preccurlyeq_C$, i.e., if $\mu \preccurlyeq_C \nu$, then $\cF(\mu) \leq \cF(\nu)$. 
This monotonicity imposes severe structural constraints on the class of totally convex functionals (for example forcing their minimizers to be Dirac masses): we use these properties to prove that internal energy functionals are not totally convex and cannot be approximated by totally convex funcionals. In Section~\ref{subsec:nonconvmoreau} we apply the previous results to find a class of geodesically convex functionals (namely those for which delta Diracs are not in their domain) with a non-convex Moreau envelope.

In Section~\ref{sec:weaklyconvex}, we turn our attention to the exploration of less restrictive assumptions that can guarantee generalized forms of non-expansivity.  In the first part of the section, we focus on weaker convexity conditions, showing that generalized forms of non-expansivity   can still be ensured without requiring total convexity.
 We introduce a notion of weak non-expansivity   (generalizing the definition in \cite{adve2020nonexpansiveness}): an operator $A: \mathcal{P}_2(\mathbb{R}^d) \to \mathcal{P}_2(\mathbb{R}^d)$ is said to be weakly non-expansive if for every $\mu,\nu \in \mathcal{P}_2(\mathbb{R}^d)$ there exists a nonempty subset $\Gamma_A(\mu,\nu) \subseteq \Gamma(\mu, \nu)$ such that 
$$W^2_2(A(\mu), A(\nu)) \leq \inf \left\{ \int_{\mathbb{R}^d \times \mathbb{R}^d} \|x-y\|^2 \, d\gamma(x,y) \; : \; \gamma \in \Gamma_A(\mu, \nu)\right\}.$$
We then prove that $\prox_{\tau \mathcal{F}}^W$ is weakly non-expansive when $\mathcal{F}$ satisfies suitable convexity assumptions. We present two possible choices for $\Gamma_A(\mu,\nu)$: the first assumes geodesic convexity (in Section~\ref{subsec:weak-geo}), while the second relies on convexity along generalized geodesics (in Section~\ref{subsec:weak-ggeo}). Both constructions are based on the concept of \emph{$3$-geodesic plans}, a key notion that we further exploit in the subsequent section.
\begin{comment}
In Section~\ref{sec:weaklyconvex} we explore other less restrictive assumptions which can guarantee some kind of non-exapansivity. We first introduce a notion of weak non-expansivity   (generalizing the definition in \cite{adve2020nonexpansiveness} where the authors were interested in the $W_2$-projection into the set of measures with density $\rho \leq 1$): an operator $A: \P_2(\R^d) \to \P_2(\R^d)$ is said weakly non-expansive if for every $\mu,\nu \in \P_2(\R^d)$ there exists $\Gamma_A(\mu,\nu) \subseteq \Gamma(\mu, \nu)$ nonempty such that 
$$ W^2_2(A(\mu), A(\nu)) \leq  \inf  \left\{ \int\|x-y\|^2 \, d\gamma \; : \; \gamma \in \Gamma_A(\mu, \nu)\right\}.$${We then prove that $\prox_{\tau \cF}^W$ is weakly non-expansive when $\cF$ is geodesically convex. We show two possible choices for $\Gamma_A(\mu,\nu)$, one in the case of geodesic convexity and one in the case of convexity along generalized geodesics, both based on a concept of \emph{$3$-geodesic plans}.}
\end{comment} 
In Section~\ref{subsec:two base}, we define a new notion of \emph{convexity along $(k-1)$-base generalized geodesics} (which are curves associated to $k$-geodesics plans), which generalizes the concepts of geodesic convexity, recovered for $k=1$, and convexity along generalized geodesics, obtained for $k=2$. We prove that the proximal operator of a functional that is convex along $2$-base generalized geodesics is non-expansive. We then discuss how large this class of functionals is. In the second part, we impose specific assumptions directly on the measures. In Section~\ref{subsec:almostlip}, we first analyze the cases where either $\mu =\delta_x$ or $\mu \in \argmin \mathcal{F}$. Under these conditions, the non-expansivity   of the proximal operator holds (see \cite{adve2020nonexpansiveness} and \cite{CarlenCraig2013}, respectively). We show that a quantitative relaxation of these statements can be established, yielding a conditional $(1+\delta)$-Lipschitz estimate. We conclude in Section~\ref{subsec:affine} by proving that non-expansivity holds if the optimal map between $\mu$ and $\nu$ is a dilation.

%pre sez 3
\begin{comment}
In the last section, Section \ref{sec:loc hol}, of this paper, we address the quantitative non conditional continuity of the proximal operator. Notice in fact that one of the drawbacks of \eqref{eqn:DeltaCC} and \eqref{eqn:DeltaAGS} is than
when $W_2(\mu,\nu)  \to 0$ the estimate does not go to zero, even though it is known that the proximal operator is continuous \cite[Lemma~4.1.2~(i)]{AGS08}. 
The other drawback is that they make sense respectively
only on the domain of $\cF$ and on the domain of $\nabla \cF$ and  This provides no information of course for the case when the measures are outside the domain of $\cF$. % is the indicator function  of a geodesically convex set, and so the proximal operator  becomes a projection operator.
In this case, in specific scenarios like density-constrained projections, the proximal map is known to be locally H\"older, though Lipschitzianity remains unknown \cite[Corollary 5.3, Remark 5.1]{DePhilippis2016}. We address both drawbacks: in Theorem~\ref{thm:hold-cont} we prove that the proximal map of any functional that is convex along generalized geodesics is locally $\frac{1}{2}$-H\"older continuous on the whole $\P_2(\R^d)$.\\
\end{comment}

In Section \ref{sec:loc hol}, the last section of this paper, we address the quantitative non conditional continuity of the proximal operator.
Notice that the estimates \eqref{eqn:DeltaCC} and \eqref{eqn:DeltaAGS} fail to imply continuity as $W_2(\mu,\nu) \to 0$, even though the proximal operator is known to be continuous \cite[Lemma~4.1.2~(i)]{AGS08}. In addition, these estimates require the measures to belong to the domain of $\cF$ and to the domain of $\nabla \cF$, respectively. Consequently, they provide no information for the case when the measures are outside the domain of $\cF$. Quantitative continuity has previously been proven only in specific scenarios, such as density-constrained projections, where the proximal map is known to be locally H\"older continuous (although its Lipschitz continuity remains unknown \cite[Corollary 5.3, Remark 5.1]{DePhilippis2016}). Here, we address both of these drawbacks by showing that local $\frac{1}{2}$-H\"older continuity holds in full generality: the proximal map of any functional that is convex along generalized geodesics is locally $\frac{1}{2}$-H\"older continuous on the whole $\P_2(\R^d)$.\\

%%%% INTRODUZIONE DI PRISM (CHATGPT) %%%%

\newpage
\noindent
\section{Preliminaries}\label{sec:prelim}
\subsection*{Notations}
We summarize below the main notation adopted in this paper.
\begin{longtable}{@{} p{4.0cm} p{12.8cm} @{}}

$\|\cdot\|$ & Euclidean norm of $\R^d$; \\

$\mathcal{L}^d$ &  Lebesgue measure of $\R^d$;\\ 

$\cP(\R^d)$ & set of Borel probability measures on $\R^d$;\\

$\cP_2(\R^d)$ & set of  Borel probability measures with finite second moment (see Section \ref{sec:prel-was});\\

$\cP_2^{a.c.}(\R^d)$ & subset of $\cP_2(\R^d)$ of probabilities absolutely continuous with respect to  $\mathcal{L}^d$;\\

$\supp(\mu)$ &  support of $\mu$;\\

$M(\mu)$ &  barycenter of $\mu$, $\int_{\R^d}x\, d\mu(x)$ (see Section \ref{subsec:totconv-mon});\\

$M_2(\mu)$ &  second moment of $\mu$;\\

 $\Var(\mu)$ & variance of $\mu$, $\int_{\mathbb{R}^d} \|x - M(\mu)\|^2 \, d\mu(x)$ (see Section \ref{subsec:totconv-mon});\\
 
{ $\pi^i: (\R^d)^{k}\to \R^d$} & the projection on the $i$-th component for $i=1,\dots,k$;\\

{ $\pi^{x}: (\R^d)^{k}\to \R^d$} & projection on the  component associated to coordinate $x$ (see Section \ref{sec:weaklyconvex});\\

{ $\pi^{i,j}: (\R^d)^{k}\to (\R^d)^2$} & projection   $(x_1,\dots,x_i, \dots, x_j, \dots, x_k)\mapsto (x_i,x_j)$ for $i,\,j=1,\dots,k$;\\

{ $\pi^{x,\tilde x}: (\R^d)^{k}\to (\R^d)^2$} & projection   $(x,\tilde x,\tilde y, y)\mapsto (x,\tilde x)$ (see Section \ref{sec:weaklyconvex});\\
 $\pi_t^{i\to j}$ & the convex combination of  components $i$ and $j$, $\pi_t^{i\to j}:= (1-t)\pi^i+t\pi^j$;\\
 
  $\pi_t^{x\to \tilde x}$ & convex combination of  components associated to coordinates $x$ and $\tilde x$, $\pi_t^{x\to \tilde x}:= (1-t)\pi^x+t\pi^{\tilde x}$ (see Section \ref{sec:weaklyconvex});\\
 $\Gamma(\mu_1,\dots, \mu_k)$ & the set of transport plans having $i$-th marginal equal to $\mu_i$ for any $i=1, \dots, k$ (see Section \ref{sec:prel-was});\\
 
 $\Gamma_{o}(\mu,\nu)$ & the set of optimal transport plans between $\mu$ and $\nu$ (see Section \ref{sec:prel-was}); \\

$\proj_K$ & Wasserstein  projection on the set $K$ (see Section \ref{subsec:prox});\\
 $\pi_t$ & $t$-interpolation between the two components when the space is a product of two spaces;\\
 $D(\cF)$ & the domain of a functional $\cF$ (see Section \ref{sec:prel-was});\\
 $\mathbf{1}_{K}$ & indicator function of the set $K$ (see \eqref{eqn:indicatorset}); \\
\end{longtable}

%--------------- Convexity in P_2 ---------

\subsection{Wasserstein space and  convexity }\label{sec:prel-was}
In this section we recall some basics on the Wasserstein space and   the classical notions of convexity for functionals in this setting that will be used throughout the paper.  A complete and detailed reference on this topic can be found in \cite{AGS08} (see also \cite{Sant}). We then recall a notion of convexity introduced more recently in \cite{CSS_2023}.\\

Let $\cP(\R^d)$ be the set of Borel probability measures on $\R^d$. Consider the complete and separable metric space $(\cP_2(\R^d), W_2)$, defined as $\cP_2(\R^d)\coloneqq \{\mu\in \cP(\R^d)\mid \int_{\R^d}\|x\|^2\d\, \mu<+\infty\}$, and for $\mu$ and $\nu$  in $\cP_2(\R^d)$, one has 
\begin{align}\label{eq:dwas}
W_2^2(\mu,\nu)\coloneqq \inf_{\gamma\in \Gamma(\mu,\nu)}\int_{\R^d\times \R^d}\|x-y\|^2\d \, \gamma, 
\end{align}
 with $\Gamma(\mu,\nu)\coloneqq \left\{\gamma \in \cP(\R^d\times\R^d)\mid \pi^1_{\#}\gamma= \mu, \, \pi^2_{\#}\gamma= \nu\right\}$ and $\pi^{i}: \R^d\times \R^d\to \R^d $, the projection on the $i$-th component, $\pi^i:(x_1,x_2)\mapsto x_i $ for $i=1,\, 2$. The set of $\gamma \in \Gamma(\mu,\nu)$ realizing the infimum in \eqref{eq:dwas} is $\Gamma_o(\mu,\nu)$. For completeness we define here, for $\mu_1,\dots, \mu_k\in \cP(\R^d)$, the set $\Gamma(\mu_1,\dots, \mu_k)\coloneqq \{\gamma\in \cP((\R^d)^k)\mid\pi^{i}_{\sharp}\gamma=\mu_i\,\, \forall\,\,  i=1,\dots, k\}$. \\
 $(\cP_2(\R^d), W_2)$ is a geodesic metric space. A curve $(\mu_t)_{t\in[0,1]}$ is constant speed geodesic between $\mu_0$ and $\mu_1$ if and only if there exists $\gamma\in \Gamma_o(\mu_0,\mu_1)$, such that $\mu_t= (\pi_t^{1\to 2})_{\#}\gamma$, with $\pi_t^{1\to 2}: \R^d\times \R^d\to \R^d$, $\pi_t^{1\to 2}:(x_1,x_2)\mapsto (1-t)x_1+tx_2$.\\

Let $\cF:\cP_2(\R^d) \to (-\infty,+\infty]$, we denote by $D(\cF)\coloneqq\{\mu\in \cP_2(\R^d)\mid \cF(\mu)<+\infty\}$, the domain of $\cF$. We say that $\cF$ is proper if $D(\cF)\neq\emptyset$.
For functionals defined on $\cP_2(\R^d)$, the standard notion of convexity is that of geodesic convexity, which naturally extends the classical convexity in linear spaces to any geodesic metric space. %In the sequel we define a more general concept of $\lambda$-convexity; a function is then convex when it is $\lambda$-convex with $\lambda=0$.

\begin{definition}[Convexity along  geodesics]
	Let $\cF: \cP_2(\R^d) \to (-\infty,+\infty]$ be a proper functional and let $\lambda \in \R$. $\cF$ is $\lambda$-convex along  geodesics if for every $\mu_0,\mu_1 \in \cP_2(\R^d)$  there exists a  geodesic $(\mu_s)_{s\in[0,1]}$ between $\mu_0$ and $\mu_1$  along which
	\begin{equation}
		\cF(\mu_s)\leq (1-s)\cF(\mu_0)+s\cF(\mu_1) -\frac \lambda2  s(1-s)W_2^2(\mu_0,\mu_1) \quad \text{for every } s \in [0,1].
	\end{equation}
    $\cF$ is said to be convex along  geodesics if it is $\lambda$-convex along geodesics with $\lambda=0$.
\end{definition}
%We recall that a curve $(\mu_s)_{s\in [0,1]}\subset  \cP_2(\R^d)$ is a constant speed geodesic between $\mu_0$ and $\mu_1$ if and only if there exists $\gamma\in \Gamma_o(\mu_0,\mu_1)$ such that $\mu_s=(\pi_t^{1\to 2})_{\#}\gamma$. \\

A stronger notion of convexity, which has turned out to be of relevance in the constext of minimizing movements the Wasserstein spaces, is the convexity along a bigger class of curves, called generalized geodesics. 
\begin{definition}[Generalized geodesic]
	A generalized geodesic between $\mu_0,\mu_1\in \cP_2(\R^d)$ with base $\nu\in \cP_2(\R^d)$ is a curve $(\mu_t)_{t\in [0,1]}$ in $\cP_2(\R^d)$  defined by
	\[\mu_t = (\pi_t^{2\to 3})_{\#} \gamma \quad t\in [0,1],\]
    where $\gamma \in \Gamma(\nu,\mu_0,\mu_1)$ such that $\pi_{\#}^{1,2}\gamma \in \Gamma_{o}(\nu,\mu_0)$ and $\pi_{\#}^{1,3}\gamma \in \Gamma_{o}(\nu,\mu_1)$. Moreover $\pi^{2,3}_{\sharp}\gamma \in \Gamma(\mu_0,\mu_1)$ is called the \emph{coupling associated} to the generalized geodesic $(\mu_t)_{t \in [0,1]}$
	%where $\pi_t^{2\to 3}:= (1-t)\pi^2+t\pi^3$, $\gamma \in \Gamma(\nu,\mu_0,\mu_1)$, $\pi_{\#}^{1,2}\gamma \in \Gamma_{o}(\nu,\mu_0)$ and $\pi_{\#}^{1,3}\gamma \in \Gamma_{o}(\nu,\mu_1)$, with $\pi^{i}$, for $i=1,2,3$, the projection on the $i$-th component,  $\pi^{1,2}:(x,y,z)\to (x,y)$ and $\pi^{1,3}:(x,y,z)\to (x,z)$. Moreover $\pi^{2,3}_{\sharp}\gamma \in \Gamma(\mu_0,\mu_1)$ is called the \emph{coupling associated} to the generalized geodesic $(\mu_t)_{t \in [0,1]}$.
\end{definition}

\begin{definition}[Convexity along generalized geodesics]
	Let $\cG: \cP_2(\R^d) \to (-\infty,+\infty]$ be a proper functional. $\cG$ is $\lambda$-convex along outer (respectively inner) generalized geodesics if for every $\mu_0,\mu_1 \in \cP_2(\R^d)$ and $\nu \in \cP_2(\R^d)$ (respectively $\nu \in D(\cG)$) there exists a generalized geodesic $(\mu_s)_{s\in[0,1]}$ between $\mu_0$ and $\mu_1$ with base $\nu$, along which
	\begin{equation}\label{eq:convgg}
		\cG(\mu_s)\leq (1-s)\cG(\mu_0)+s\cG(\mu_1)  -\frac \lambda2 s(1-s) \int \| x- y\|^2 \,d \gamma \quad \text{for every } s \in [0,1],
	\end{equation}
    where $\gamma \in \Gamma(\mu_0,\mu_1)$ is the coupling associated to the generalized geodesic $(\mu_s)_s$. 
        When $\lambda=0$ we simply say that $\cG$ is  convex along  generalized geodesics.
\end{definition}
Since, given $\mu_0$ and $\mu_1$, the generalized geodesics between $\mu_0$ and $\mu_1$ with base $\mu_0$ are the geodesics between $\mu_0$ and $\mu_1$, convexity along generalized geodesics implies convexity along geodesics. 

In the following we will use convexity along \emph{outer} generalized geodesics; for this reason, in what follows we shall simply refer to it as convexity along generalized geodesics.

In \cite{AGS08}, only convexity along \emph{inner} generalized geodesics is studied. However, most of the functionals considered in the literature that are convex along inner generalized geodesics are also convex along outer generalized geodesics: in particular, lower semicontinuous functionals with dense domain that are convex along inner generalized geodesics are also convex along outer generalized geodesics.
Nevertheless, the two definitions are not equivalent in general, as the following example shows.
To fix the notations, we define here the indicator function of a set $K$, $ \mathbf{1}_{K}$
\begin{align}\label{eqn:indicatorset}
 \mathbf{1}_{K}(\mu)\coloneqq
\begin{cases}
 0 \quad &\text{if } \mu\in K, \\ 
 +\infty &\text{if } \mu\notin K.
\end{cases}
 \end{align}
 We have that $\mathbf{1}_{K}$ is lower semicontinuous whenever $K$ is closed; moreover we say that $K$ is (geodesically, generalized geodesically or totally) convex if $\mathbf{1}_{K}$ is (geodesically, generalized geodesically or totally) convex.
 
 \begin{example}[Convexity w.r.t. outer or inner generalized geodesics is different] \label{ex:in-not out}
   % An example of a functional which is convex along inner generalized geodesics, but not convex along outer generalized geodesics is the following.
   
   Let $\P_{\frac{1}{2}}(\R^d)$ be the set of $\frac{1}{2}$-quantized probability measures, that is 
    \begin{align*}
    \P_{\frac{1}{2}}(\R^d)\coloneqq\left\{\frac{1}{2}(\delta_{x_1}+\delta_{x_2}) \text{ with } x_1,\, x_2\, \text{not necessarily distinct }\right\}.
    \end{align*}
    The indicator function of this set  $\mathbf{1}_{\P_{\frac{1}{2}}(\R^d)}$  is convex along inner generalized geodesics. This follows from the fact that for any couple $\mu_0,\,\mu_1\in \P_{\frac{1}{2}}(\R^d)$, and for any $\nu= \frac 12 (\delta_{x_1} + \delta_{x_2} )\in\P_{\frac{1}{2}}(\R^d)$, there exists a generalized geodesic $(\mu_s)_{s\in [0,1]}$, with base $\nu$ such that $\mu_s\in\P_{\frac{1}{2}}(\R^d) $ for any $s\in [0,1]$. %it is obvious if $x_1=x_2$, otherwise
   By  using the existence of a Monge map between uniformly distributed discrete measures (Birkhoff theorem) one can build a generalized geodesic of the Monge form $\mu_s=(T_s)_\sharp \nu\in \P_{\frac 12}(\R^d)$. 
    
    However $\P_{\frac 12}(\R^d)$ is not convex along outer generalized geodesics for $d>1$: it suffices to consider $\mu_0=\frac 12 (\delta_{2e_1}+\delta_{-2e_1})$, $\mu_1= \frac 12 (\delta_{2e_2}+\delta_{-2e_2}) $ and $\nu$ the uniform measure on the unit ball. Since $\nu \ll \mathcal{L}^d$, there exists a unique generalized geodesic $\mu_s$ with base $\nu$, for which
    $$\mu_{\frac 12}=\frac 14 ( \delta_{e_1+e_2}+\delta_{e_1-e_2}+\delta_{e_2-e_1}+\delta_{-e_1-e_2} ) \not \in \P_{\frac 12} (\R^d).$$
    Similarly, for every $n \in \N$, $\P_{\frac 1n} (\R^d)$ is also convex along inner generalized geodesics but not along outer generalized geodesics.
\end{example}
We finally recall a stronger notion of convexity called  total convexity introduced recently in \cite[Definition 5.1]{CSS_2023}.
We call curve induced by a coupling between two probability measures $\mu$ and $\nu$ (possibly equal), any curve $(\mu_t)_{t\in [0,1]}$, of the form 
\begin{equation*}
 \mu_t\coloneqq (\pi^{1\to 2}_t)_{\sharp}\gamma,\quad  \text{with}\quad\gamma\in \Gamma(\mu,\nu).
\end{equation*}

\begin{definition}[Total convexity]
	Let $\cG: \cP_2(\R^d) \to (-\infty,+\infty]$ be a proper functional. $\cG$ is totally $\lambda$-convex if for every $\mu_0,\mu_1 \in \cP_2(\R^d)$, $\cG$ is $\lambda$-convex along any curve induced by a coupling, that is, for any $\gamma \in \Gamma(\mu_0,\mu_1)$ and $\mu_s\coloneqq(\pi_s^{1\to2})_{\#}\gamma$ it holds
	\begin{equation}\label{eqn:totconv}
		\cG(\mu_s)\leq (1-s)\cG(\mu_0)+s\cG(\mu_1) - \frac{\lambda}{2} s(1-s)\int \|x-y\|^2\, d\gamma \quad \text{for every } s \in [0,1].
	\end{equation}
       % with $\mu_s\coloneqq(\pi_s^{1\to2})_{\#}\gamma$.
       $\cG$ is totally convex if it is totally $\lambda$-convex with $\lambda=0$.
\end{definition}

We want to remark that for every definition of convexity in this section we have
\begin{equation}\label{eqn:equiv_lambdaconv} \cG \text{ is $\lambda$-convex if and only if }\cG- \frac{\lambda}2 M_2 \text{ is convex}\end{equation}
where $M_2(\mu)=\int_{\R^d} \|x\|^2 \, d\mu$ is the second moment functional. Notice that \eqref{eqn:equiv_lambdaconv} is a similar characterization of $\lambda$-convex functions in $\R^d$, with the $1$-convex function $\frac 12M_2$ in $\P_2(\R^d)$ playing the role of the $1$-convex function $x \mapsto \frac 12 \|x\|^2$ in $\R^d$.
%Observe that the indicator function  of a set $K\subseteq \P_2(\R^d)$, $\mathbf{1}_{K}$,  is totally convex if and only if $K$  for any $\mu_1, \, \mu_2\in K$, and any interpolating curve $t\mapsto \mu_t$ induced by a coupling between them, we have  $\mu_t\in K$. 

%\textcolor{red}{Let $\pi^t: \R^d\times \R^d\to \R^d$ be the interpolation map,
%\begin{equation*}
%\pi^t:(x,y)\mapsto (1-t)x+ty.
%\end{equation*}}
%----------------------Proximal operator --------------------
\subsection{Proximal operator}\label{subsec:prox}
Let $\mathcal{F}:\cP_2(\R^d)\to (-\infty, +\infty]$ be proper and lower semicontinuous. The proximal operator associated to $\mathcal{F}$ of step $\tau>0$ is 
\begin{equation}\label{eq:prox}
    \prox_{\tau \cF}^W(\mu) \coloneqq \argmin_{\nu \in \cP_2(\R^d)} \left\{ \mathcal{F}(\nu) + \frac{1}{2\tau}W_2^2(\mu,\nu) \right\}.
\end{equation}
A particular case of proximal operator which has been investigated in the literature is that of projection operators. Let $K\subseteq \cP_2(\R^d)$, then 
%be a closed set and denote by $\mathbf{1}_{K}$ the indicator function of the $K$:
%\begin{align*}
%\mathbf{1}_{K}(\nu)=
%\begin{cases}
%0 &\text{if } \nu\in K,\\
%+\infty&\text{if not}.
%\end{cases}
%\end{align*}
%Then 
\begin{align}\label{eq:prox=proj}
\prox_{\tau \mathbf{1}_K}^W(\mu)=   \proj_K(\mu)
\end{align}
where  $\mathbf{1}_K$ is the indicator function of $K$ as in \eqref{eqn:indicatorset} and $\proj_K$ is the $W_2$-projection on the set $K$.

If $\mathcal{F}$ is lower semi-continuous for the weak convergence and there exists $\alpha,\, \beta>0$ and $\bar \mu\in \cP_2(\R^d)$ such that 
$\mathcal{F}(\mu)\geq -\frac{\alpha}2 W_2^2(\bar \mu, \mu)-\beta$ for any $\mu\in \cP_2(\R^d)$, then for $\tau <\frac{1}{\alpha}$ the proximal operator \eqref{eq:prox} is well defined in the sense that the set of minimizers in \eqref{eq:prox} is non empty, see for example \cite[Corollary~2.2.2]{AGS08}. In general this operator can be multivalued, but as soon as $\mathcal{F}$ is $\lambda$-convex and $\lambda \tau > - 1$ this cannot happen.
\begin{lemma}
Let $\mathcal{F}:\cP_2(\R^d)\to (-\infty, +\infty]$ be proper,  weakly lower semi-continuous and $\lambda$-convex along outer generalized generalized geodesics. If $\lambda \tau >-1$ then for every $\mu \in \P_2(\R^d)$ we have that $\prox_{\tau \cF}^{W}(\mu)$ is single valued and nonempty.
\end{lemma}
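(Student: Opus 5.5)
The plan has two parts: existence of a minimizer, which comes from the compactness/lower semicontinuity argument of \cite[Corollary~2.2.2]{AGS08}, and uniqueness, which comes from strict convexity of the objective along a generalized geodesic with base $\mu$.

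\textbf{Existence.} Set $\Phi(\nu)\coloneqq \cF(\nu)+\frac{1}{2\tau}W_2^2(\mu,\nu)$. To use \cite[Corollary~2.2.2]{AGS08}, I first need a quadratic lower bound $\cF(\nu)\geq -\frac{\alpha}{2}W_2^2(\bar\mu,\nu)-\beta$ with some $\alpha<1/\tau$. Since $\lambda\tau>-1$, this holds for any $\alpha>\max(-\lambda,0)$ with $\alpha<1/\tau$.

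This bound follows from $\lambda$-convexity in the standard way. Fix $\bar\mu\in D(\cF)$. For $\lambda\ge0$, convexity plus lower semicontinuity give an affine-type lower bound, which is quadratic with arbitrarily small constant. In general, use $\lambda$-convexity along generalized geodesics with base $\bar\mu$, together with the equivalence \eqref{eqn:equiv_lambdaconv}, to reduce to the case $\lambda=0$.

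If tracking the sharp constant is delicate, there is a simpler route. Take a minimizing sequence $\nu_n$ for $\Phi$. Coercivity gives bounded second moments, hence tightness, and weak lower semicontinuity of $\cF$ and of $W_2^2(\mu,\cdot)$ then gives a minimizer. The coercivity needed is exactly the bound above with some $\alpha<1/\tau$.

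\textbf{Uniqueness.} Let $\nu_0\neq\nu_1$ be two minimizers. Apply the convexity hypothesis with base $\mu$, which is admissible precisely because we assume convexity along \emph{outer} generalized geodesics, so $\mu$ need not lie in $D(\cF)$. This gives $\gamma\in\Gamma(\mu,\nu_0,\nu_1)$, with optimal projections $\pi^{1,2}_\#\gamma$ and $\pi^{1,3}_\#\gamma$, and the curve $\nu_s=(\pi_s^{2\to3})_\#\gamma$. Along this curve
\[
\cF(\nu_s)\le(1-s)\cF(\nu_0)+s\cF(\nu_1)-\tfrac{\lambda}{2}s(1-s)\int\|y-z\|^2\,d\gamma .
\]
Using the plan $(\pi^1,\pi_s^{2\to3})_\#\gamma$ as a competitor and expanding the Hilbert identity for $\|x-((1-s)y+sz)\|^2$, we get
\[
W_2^2(\mu,\nu_s)\le(1-s)W_2^2(\mu,\nu_0)+sW_2^2(\mu,\nu_1)-s(1-s)\int\|y-z\|^2\,d\gamma .
\]
Adding the two inequalities with the weight $\frac{1}{2\tau}$ on the second yields
\[
\Phi(\nu_s)\le(1-s)\Phi(\nu_0)+s\Phi(\nu_1)-\tfrac{1}{2}\Big(\tfrac1\tau+\lambda\Big)s(1-s)\int\|y-z\|^2\,d\gamma .
\]
Since $\nu_0\neq\nu_1$, we have $\int\|y-z\|^2\,d\gamma\ge W_2^2(\nu_0,\nu_1)>0$. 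The coefficient $\frac1\tau+\lambda$ is positive exactly because $\lambda\tau>-1$. So at $s=\frac12$ we get $\Phi(\nu_{1/2})<\min\Phi$, a contradiction.

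\textbf{Main obstacle.} The uniqueness step is routine. The only real obstacle is the existence step, specifically deriving the quadratic lower bound from $\lambda$-convexity with a constant that stays strictly below $1/\tau$ in the full range $\lambda\tau>-1$.
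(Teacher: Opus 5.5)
Your proof is correct. The paper gives no argument of its own. It defers to \cite[Lemma~4.1.2(i)]{AGS08}, noting only that outer convexity lets the base point of the generalized geodesic be the given $\mu$ even when $\mu\notin\overline{D(\cF)}$.

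That cited argument rests on exactly the inequality you derive: the $(\frac1\tau+\lambda)$-convexity of $\Phi_\mu=\cF+\frac1{2\tau}W_2^2(\mu,\cdot)$ along a generalized geodesic with base $\mu$. So your uniqueness step coincides with the paper's. The difference lies in existence. In the metric framework of that reference, existence comes out of the same inequality rather than from compactness. Applied at $s=\frac12$ to two terms $\nu_n,\nu_m$ of a minimizing sequence, it gives
\[
\tfrac18\bigl(\tfrac1\tau+\lambda\bigr)W_2^2(\nu_n,\nu_m)\le\tfrac12\Phi_\mu(\nu_n)+\tfrac12\Phi_\mu(\nu_m)-\inf\Phi_\mu\longrightarrow0 .
\]
Hence minimizing sequences are Cauchy, and completeness of $(\P_2(\R^d),W_2)$ plus $W_2$-lower semicontinuity finish the job. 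Your direct method via tightness instead uses the \emph{weak} lower semicontinuity in the hypothesis, which is legitimate since it is assumed. It uses convexity only for uniqueness and coercivity. The Cauchy route gets existence and uniqueness in one stroke, and needs only $W_2$-lower semicontinuity once the lower bound is known.

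Both routes need $\inf\Phi_\mu>-\infty$, and this is the step you leave vaguest. The claim that convexity plus lower semicontinuity give an ``affine-type lower bound'' is not automatic in a metric space. With weak lower semicontinuity it is short:
\begin{enumerate}
\item The closed ball $\{W_2(\cdot,\bar\mu)\le r\}$ is narrowly compact, because second moments are bounded and $W_2$ is narrowly lower semicontinuous. Hence $\cF\ge m_r>-\infty$ on it.
\item Apply $\lambda$-convexity along a geodesic from $\bar\mu$ to $\nu$, evaluated at $t=r/W_2(\bar\mu,\nu)$. This gives $\cF(\nu)\ge\frac\lambda2W_2^2(\bar\mu,\nu)-B\,W_2(\bar\mu,\nu)-A$.
\item This yields the quadratic bound for every $\alpha>\max\{-\lambda,0\}$. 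In particular it holds for some $\alpha<1/\tau$, since $\lambda\tau>-1$.
\end{enumerate}
No reduction through \eqref{eqn:equiv_lambdaconv} is needed.
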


The proof of this fact when $\mu \in \overline{D(\cF)}$ for a $\lambda$-convex function along \emph{inner} generalized geodesics can be found in \cite[Lemma~4.1.2(i)]{AGS08}. The proof for any $\mu \in \P_2(\R^d)$ if $\cF$ is $\lambda$-convex function on \emph{outer} generalized geodesics follows verbatim the same proof but using the outer generalized geodesics. Moreover, as explained in the following remark (which follows a counterexample illustrated in \cite{DiNaVi}), convexity along outer generalized geodesics is needed in order to have uniqueness for a general $\mu$.
\begin{remark}
Let us consider $K=\P_{\frac 12} (\R^d)$ as in Example \eqref{ex:in-not out}: there it is shown that $\textbf{1}_{K}$ is convex along inner generalized geodesic but is \emph{not} convex along outer generalized geodesics.
Let $\mu=\chi_B\mathcal{L}^d$ be the characteristic function of a ball $B$ with $\mathcal{L}^d(B)=1$. Then $\prox^{W_2}_{\tau \textbf{1}_{K} }(\mu)$ is not single-valued; in fact
$$\prox^{W}_{\tau \textbf{1}_{K} }(\mu)=\left\{\frac{1}{2}\delta_{R_{\theta}x}+\frac{1}{2}\delta_{R_{\theta}(-x)}\mid\theta \in [0,2\pi]\right\},$$
where $R_{\theta}$ is the rotation by angle $\theta$ and {$x$ is the barycenter of $\mu_{|\{x_1>0\}}$}. 
   % \textcolor{gray}{
	%Let $C$ be the set of measures composed by one or two deltas. Then $C$ is convex along inner generalized geodesics but $J_{\gamma \cG}$ is not Lipschitz. For example, given as $\mu$ the Gaussian with mean zero and the identity as covariance matrix, the projection onto $C$ is not unique.} 
    %\sara{Vogliamo formalizzare questo? ovvero il fatto che il grafico è chiuso?}
    %\textcolor{gray}{Moreover, taking Gaussians that go to round Gaussian in one direction or the other. They jump. The projection would then probably have a closed graph, but not Lipschitz...}
    
    Following \cite[Lemma~4.1.1]{AGS08} one could show that the graph of the multivalued operator $\mu \mapsto \prox^{W}_{\tau \textbf{1}_{K} }(\mu)$ is actually closed; however it is not continuous even in its uniqueness domain. In order to show it, it is sufficient to consider $\mu^\theta_{\eps}=\chi_{E_{\eps}^{\theta}} \mathcal{L}^d$, where $E_{\eps}^{\theta}$ are unit volume ellipses lightly stretched in the direction $\theta$; as $\eps \to 0$ for every $\theta$ we will have
    $$\prox_{\tau \textbf{1}_{K}}^{W_2}(\mu^{\theta}_{\eps}) \to \frac{1}{2}\delta_{R_{\theta}x}+\frac{1}{2}\delta_{R_{\theta}(-x)}, $$
    while having $W_2(\mu^{\theta}_{\eps}, \mu^{\theta'}_{\eps}) \lesssim \eps \to 0  $.
\end{remark}

%---------------- Conv order------------
\subsection{Convex order of probability measures}
Here we recall the notion of convex order, that is a notion of partial ordering on the set of probability measures $\cP(\R^d)$ and in particular in  $\cP_2(\R^d)$ (see e.g.\cite{Strassen65}).  

\begin{definition}[Convex order]
Given  $\mu,\nu\in\P_2(\R^d)$, $\mu$ and $\nu$ are in convex order $\mu\preccurlyeq_C\nu$ if $\int_{\R^d}f\d \mu\leq \int_{\R^d}f\d \nu$ for any convex function $f$. 
\end{definition}
An important characterization in terms of Markov's kernels is the following Theorem, for which we refer e.g. to \cite[Theorem 2]{Strassen65}.
\begin{theorem}[Characterization of convex order]\label{thm:char_of_con_ord}
$\mu,\nu\in\P_2(\R^d)$ are in convex order $\mu\preccurlyeq_C\nu$ if and only if there exists a family  $\{\eta_{z}\}_{z\in \R^d}$ of probability measures of $\R^d$, such that 
\begin{itemize}
\item for any $B\subseteq \R^d$, $z\mapsto \eta_{z}(B)$ is $\mu$ measurable,  (the family is weakly Borel);
\item for any $z\in \R^d$, $M(\eta_z)=z$;
\item $\nu= \int_{\R^d}\eta_{z}\d \mu(z)$ i.e. for any for any $B\subseteq \R^d$, $\nu(B)= \int_{\R^d}\eta_{z}(B)\d \mu(z)$.
\end{itemize}
\end{theorem}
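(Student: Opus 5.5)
The implication ``kernel $\Rightarrow$ convex order'' I would do directly with Jensen's inequality. The converse I would prove first for finitely supported measures by linear-programming duality (Farkas' lemma), and then pass to general measures by approximation and compactness.

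\emph{Sufficiency.} Let $f$ be convex. Then $f$ is bounded below by an affine function, so $\int f\,d\eta_z$ and $\int f\,d\nu$ are well defined in $(-\infty,+\infty]$, because the second moments are finite. By Jensen's inequality and $M(\eta_z)=z$,
\[
\int f\,d\nu=\int\!\!\int f\,d\eta_z\,d\mu(z)\ \ge\ \int f(M(\eta_z))\,d\mu(z)=\int f\,d\mu .
\]
Hence $\mu\preccurlyeq_C\nu$.

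\emph{Necessity, finite case.} Let $\mu=\sum_i a_i\delta_{x_i}$ and $\nu=\sum_j b_j\delta_{y_j}$. I look for $p_{ij}\ge0$ with
\[
\sum_j p_{ij}=a_i,\qquad \sum_i p_{ij}=b_j,\qquad \sum_j p_{ij}(y_j-x_i)=0 .
\]
This is a finite linear feasibility problem. If it were infeasible, Farkas' lemma would give $\varphi_i\in\R$, $\psi_j\in\R$ and $h_i\in\R^d$ such that
\[
\varphi_i+\psi_j+h_i\cdot(y_j-x_i)\ge0 \quad\text{for all } i,j, \qquad \sum_i a_i\varphi_i+\sum_j b_j\psi_j<0 .
\]
Set $c(y)\coloneqq\max_i\bigl(-\varphi_i-h_i\cdot(y-x_i)\bigr)$. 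This is convex as a maximum of affine functions. It satisfies $c(y_j)\le\psi_j$, and, taking the index $i$ itself in the maximum, $c(x_i)\ge-\varphi_i$. Therefore
\[
\int c\,d\nu\le\sum_j b_j\psi_j<-\sum_i a_i\varphi_i\le\int c\,d\mu ,
\]
which contradicts $\mu\preccurlyeq_C\nu$. So a martingale coupling $\pi$ exists, and disintegrating $\pi$ with respect to its first marginal gives the kernel $\eta_z$.

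\emph{General case.} I sandwich the data by finitely supported measures that respect the convex order.
\begin{itemize}
\item Approximation from below: take a finite partition into cells and collapse each cell of $\mu$ onto its barycenter, giving $\mu_n$. Conditional expectation gives $\mu_n\preccurlyeq_C\mu$.
\item Approximation from above: cover a large ball with a simplicial grid of mesh $1/m$ and split each point $y$ onto the vertices of its simplex with barycentric weights. This gives a finitely supported $\nu_m$ with $\nu\preccurlyeq_C\nu_m$, since the split is itself a martingale kernel.
\end{itemize}
By transitivity $\mu_n\preccurlyeq_C\nu_m$, so the finite case gives martingale couplings $\pi_{n,m}\in\Gamma(\mu_n,\nu_m)$.

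Since $W_2(\mu_n,\mu)\to0$ and $W_2(\nu_m,\nu)\to0$, the second moments of the marginals are uniformly bounded. Hence the $\pi_{n,m}$ are tight, and I extract a weak limit $\pi\in\Gamma(\mu,\nu)$. The martingale constraint is written as
\[
\int g(x)\cdot(y-x)\,d\pi_{n,m}=0 \qquad\text{for every } g\in C_b(\R^d;\R^d).
\]
It passes to the limit because $|g(x)\cdot(y-x)|\le C(\|x\|+\|y\|)$, and this is uniformly integrable when the second-moment convergence is uniform. Disintegrating $\pi$ then gives the kernel.

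The main obstacle I expect is the upward approximation $\nu_m$ when $\nu$ has unbounded support. Points outside the ball must also be handled while keeping $\nu\preccurlyeq_C\nu_m$, $\nu_m$ finitely supported, and $W_2(\nu_m,\nu)\to0$. My first attempt is to replace the ball by a large simplex $S_R$ and map each point outside $S_R$ to a convex combination of vertices of a dilated simplex whose hull contains it. The mass involved is small, but its second-moment contribution has to be controlled. As a fallback I would treat compactly supported $\mu$ and $\nu$ first, and then truncate by conditioning $\nu$ on large sets in a way compatible with the convex order.
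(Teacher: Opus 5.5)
Your sufficiency argument and the finite case are correct. The Farkas alternative is set up properly, the convex function $c$ is exactly the right dual certificate, and the limit passage in the martingale constraint is fine.

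The gap is in the general case, and it is not just a technical obstacle: the upward approximation you need \emph{cannot exist} when $\supp\nu$ is unbounded. Suppose $\nu\preccurlyeq_C\nu_m$ with $\nu_m$ finitely supported, and put $K=\mathrm{conv}(\supp\nu_m)$, a compact convex set. The function $f=\dist(\cdot,K)$ is convex and $1$-Lipschitz, so $\int f\,d\nu\le\int f\,d\nu_m=0$, and $\nu$ is concentrated on $K$. Hence any ``dilated simplex'' construction for the points outside $S_R$ necessarily produces infinitely many atoms. The smallness of the mass there is irrelevant; the issue is the support. Your finite linear program then no longer applies.

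Your fallback, truncating to compact supports while keeping the order, is precisely the missing step, and it is left unspecified. Collapsing the tail of $\nu$ onto a barycenter moves $\nu$ \emph{down} in convex order, so $\mu\preccurlyeq_C\nu_R$ is no longer guaranteed. Modifying $\mu$ accordingly would require knowing which part of $\mu$ feeds the tail of $\nu$, i.e.\ essentially the martingale coupling you are trying to build. As written, your argument proves the theorem only for compactly supported marginals.

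One way to close the gap inside your framework is to stop requiring the discrete approximants to be in convex order and make the Farkas step quantitative. For finitely supported $\mu_n=\sum_i a_i\delta_{x_i}$ and $\nu_n=\sum_j b_j\delta_{y_j}$, minimize the defect $\sum_i\|\sum_j p_{ij}(y_j-x_i)\|_1$ over couplings.
\begin{itemize}
\item This is a feasible linear program. Its dual maximizes $-\sum_i a_i\varphi_i-\sum_j b_j\psi_j$ under exactly your constraints plus $\max_k|h_{i,k}|\le 1$.
\item Your construction of $c$ then bounds the optimal value by $\int c\,d\mu_n-\int c\,d\nu_n$ for a convex, $\sqrt d$-Lipschitz $c$. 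When $\mu\preccurlyeq_C\nu$ this is at most $\sqrt d\,\bigl(W_1(\mu_n,\mu)+W_1(\nu_n,\nu)\bigr)$.
\item Take for both $\mu_n$ and $\nu_n$ your barycentric cell collapses. These converge in $W_2$ even with unbounded support, using one unbounded cell, so the optimal couplings have vanishing defect.
\item The defect equals $\sup\{\int g(x)\cdot(y-x)\,d\pi : g\in C_b(\R^d;\R^d),\ \max_k\sup|g_k|\le 1\}$. It is therefore lower semicontinuous along your uniformly integrable sequence, and the weak limit is a martingale coupling.
\end{itemize}
Alternatively, you can work directly on $\Gamma(\mu,\nu)$ with Kantorovich duality for the cost $g(x)\cdot(y-x)$ and a minimax theorem.

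For comparison, the paper gives no proof of this statement: it simply quotes Strassen's theorem, whose proof is functional-analytic (Hahn--Banach). Your route, once repaired, is a self-contained elementary alternative.
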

The characterization reads well in terms of random variable: $\mu \preccurlyeq_C \nu$ if there exist a probability space $(\Omega, \mathbb{P})$ and random variables $X,Y \in L^2(\mathbb{P};\R^d)$ whose laws are respectively $\mu$ and $\nu$, and 
$\mathbb{E} ( Y | X) =X$.
%%%%%%%%%%%%%%%%%%%%%%%%%%%%%%%%%%%%%%%%%%%%
%%%%%%%%%%%% Sect 1 %%%%%%%%%%%%%%%%%%%%%%%%
%%%%%%%%%%%%%%%%%%%%%%%%%%%%%%%%%%%%%%%%%%%%
\section{Totally convex functionals, convex order of measures and proximal operator}\label{sec:totallyconvex}

In this section we explore the totally convex functionals. First we prove, as a consequence of results in \cite{CSS_2023} and \cite{PiSa}, that for this class of functional we have in fact non-expansivity   of the proximal operator.

Then we prove a rigidity result for totally convex functionals, namely that they are monotone with respect to the convex order, see Theorem~\ref{thm:mon-conv}. This rigidity will imply not only that many functionals $\cG$ which are convex with respect to generalized geodesics are not totally convex, but also that
\begin{itemize}
    \item $\prox_{\tau \cG}^{W_2}$ cannot be aprroximated by $\prox_{\tau \cG_n}^{W_2}$ for a sequence of $\cG_n$ totally convex functionals (Proposition~\ref{prop:prox_not_approx})
    \item $\cG$ cannot approximated by totally convex functionals $\cG_n$.
\end{itemize}
This last point will allow us to prove that for a quite general class of $\cG$ convex with respect to generalized geodesics, we have that for every $\lambda$, $\cG^{\tau}$ is not even $\lambda$-convex along generalized geodesic for $\tau$ sufficiently small.

%\simone{introduzione}

\subsection{Non expansivity of the proximal operator of totally convex functionals} \label{seubsec:totconv-ne}
\begin{prop}\label{prop:tot-conv-non exp}
Let $\cF:\cP_2(\R^d)\to (-\infty,+\infty]$ be proper, lower semicontinuous and totally convex. Then $\prox_{\tau \cF}^{W}(\mu):\cP(\R^d)\to (-\infty,+\infty]$ is non expansive. 
\end{prop}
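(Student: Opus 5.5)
Given $\mu,\nu\in\P_2(\R^d)$, set $\bar\mu=\prox^W_{\tau\cF}(\mu)$ and $\bar\nu=\prox^W_{\tau\cF}(\nu)$. The plan is to transfer the classical Hilbert-space argument for firm non-expansivity by lifting everything to a common probability space, following the Lagrangian viewpoint of \cite{CSS_2023,PiSa}. Total convexity, unlike geodesic convexity, allows interpolation along arbitrary couplings, so we are free to choose the couplings.

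\textbf{Step 1 (lifting).} Take $\gamma\in\Gamma_o(\mu,\nu)$ and optimal plans $\gamma_\mu\in\Gamma_o(\mu,\bar\mu)$, $\gamma_\nu\in\Gamma_o(\nu,\bar\nu)$. By gluing, obtain $\boldsymbol\gamma\in\Gamma(\mu,\nu,\bar\mu,\bar\nu)$ with $\pi^{1,2}_\#\boldsymbol\gamma=\gamma$, $\pi^{1,3}_\#\boldsymbol\gamma=\gamma_\mu$ and $\pi^{2,4}_\#\boldsymbol\gamma=\gamma_\nu$. Equivalently, there are random variables $X,Y,\bar X,\bar Y$ on a common space whose laws are $\mu,\nu,\bar\mu,\bar\nu$, with $(X,\bar X)$ and $(Y,\bar Y)$ optimal and $\mathbb E\|X-Y\|^2=W_2^2(\mu,\nu)$.

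\textbf{Step 2 (variational inequality).} Compare $\bar\mu$ with the competitor $\rho_s=\mathrm{law}((1-s)\bar X+s\bar Y)$, $s\in(0,1]$. This curve is induced by the coupling $\mathrm{law}(\bar X,\bar Y)$, so total convexity gives $\cF(\rho_s)\le(1-s)\cF(\bar\mu)+s\cF(\bar\nu)$. For the distance term, the coupling $\mathrm{law}(X,(1-s)\bar X+s\bar Y)$ gives
\[
W_2^2(\mu,\rho_s)\le \mathbb E\|X-\bar X-s(\bar Y-\bar X)\|^2 ,
\]
while $W_2^2(\mu,\bar\mu)=\mathbb E\|X-\bar X\|^2$ holds with equality, by optimality of $(X,\bar X)$. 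Insert both facts into the minimality of $\bar\mu$, divide by $s$, and let $s\downarrow0$. This yields
\[
\tau\bigl(\cF(\bar\mu)-\cF(\bar\nu)\bigr)\le \mathbb E\langle X-\bar X,\bar Y-\bar X\rangle .
\]
Symmetrically, comparing $\bar\nu$ with $\mathrm{law}((1-s)\bar Y+s\bar X)$ gives $\tau(\cF(\bar\nu)-\cF(\bar\mu))\le\mathbb E\langle Y-\bar Y,\bar X-\bar Y\rangle$. Both quantities are finite because $\bar\mu,\bar\nu\in D(\cF)$.

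\textbf{Step 3 (Hilbert conclusion).} Summing the two inequalities gives $\mathbb E\|\bar X-\bar Y\|^2\le\mathbb E\langle X-Y,\bar X-\bar Y\rangle$, which is firm non-expansivity in $L^2$. Cauchy--Schwarz then gives $\mathbb E\|\bar X-\bar Y\|^2\le\mathbb E\|X-Y\|^2$. Since $\mathrm{law}(\bar X,\bar Y)\in\Gamma(\bar\mu,\bar\nu)$, we conclude
\[
W_2^2(\bar\mu,\bar\nu)\le\mathbb E\|\bar X-\bar Y\|^2\le W_2^2(\mu,\nu).
\]

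\textbf{Main obstacle.} The delicate point is the gluing in Step 1. One random vector must realise simultaneously the optimal pairs $(X,\bar X)$ and $(Y,\bar Y)$ and the optimal pair $(X,Y)$. Iterated gluing achieves this: first glue $\gamma$ and $\gamma_\mu$ along the $\mu$-marginal, then glue the result with $\gamma_\nu$ along the $\nu$-marginal. In this construction the pair $(\bar X,\bar Y)$ is not optimal. That is harmless precisely because total convexity permits interpolation along any coupling, and this is where geodesic or generalized-geodesic convexity would fail.

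A secondary point is the passage to the limit $s\downarrow0$, which only requires expanding the square, since all terms are in $L^2$. The proof also uses that the proximal map is single-valued. This follows from the earlier lemma, since total convexity implies convexity along outer generalized geodesics.
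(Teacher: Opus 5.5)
Your argument is correct apart from a sign slip in Step 2. Expanding the square and letting $s\downarrow 0$ gives
\[
\tau\bigl(\cF(\bar\mu)-\cF(\bar\nu)\bigr)\le \mathbb E\langle X-\bar X,\bar X-\bar Y\rangle,\qquad \tau\bigl(\cF(\bar\nu)-\cF(\bar\mu)\bigr)\le \mathbb E\langle Y-\bar Y,\bar Y-\bar X\rangle .
\]
You wrote $\bar Y-\bar X$ and $\bar X-\bar Y$ in the second slots instead. Summing your versions gives the reverse of the inequality in Step 3. Summing the corrected ones gives exactly $\mathbb E\|\bar X-\bar Y\|^2\le \mathbb E\langle X-Y,\bar X-\bar Y\rangle$, and the rest goes through as you say.

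Your route differs from the paper's proof of this proposition. The paper builds no competitor in $\P_2(\R^d)$. It invokes \cite[Proposition 5.4]{CSS_2023} to identify $\prox^W_{\tau\cF}(X_\#\mathbb P)$ with $\prox^{L^2}_{\tau\hat\cF}(X)_\#\mathbb P$, where $\hat\cF$ is the convex Lagrangian lift. It then applies Hilbert non-expansivity and optimizes over lifts via \eqref{eq:was-hil}. That proof is very short, and it gets existence and uniqueness of the proximal point for free from Hilbert theory. It also gives the $\lambda$-convex version (Remark~\ref{rem:totlambda}) by quoting a Hilbert result. However, it rests on a nontrivial external identification theorem.

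Your proof is self-contained. It shows that total convexity is needed along only one non-optimal coupling: the one between $\bar\mu$ and $\bar\nu$ obtained by gluing optimal plans along the chain $\bar\mu\to\mu\to\nu\to\bar\nu$. This is precisely the argument the paper gives later for Theorem~\ref{thm:tbgg_ne}. So you are in effect deriving the proposition as a special case of convexity along 2-base generalized geodesics. Keeping the term $-\frac\lambda2 s(1-s)\mathbb E\|\bar X-\bar Y\|^2$ gives the factor $1+\lambda\tau$ directly.

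You also never actually use single-valuedness. Your argument works for arbitrary $\bar\mu\in\prox^W_{\tau\cF}(\mu)$ and $\bar\nu\in\prox^W_{\tau\cF}(\nu)$, and taking $\mu=\nu$ proves uniqueness. What you do take for granted is existence of a minimizer, which the lifting route supplies automatically.
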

To prove the result, we use the Lagrangian lifting of $\cP_2(\R^d)$: for a reference see e.g. \cite{CSS_2023}. We refer to the notations of \cite[Section 2.3]{PiSa}.
Given $(\Omega,\mathbb{P})$ a probability space with $\mathbb{P}$ nonatomic, consider the map 
\begin{align*}
\iota:\, &\mathcal{H}\to \cP_2(\R^d),\\
&X\mapsto X_\sharp\mathbb{P},
\end{align*}
which satisfies $W_2(\iota(X_1),\iota (X_2))\leq \|X_1-X_2\|_{\mathcal{H}}$.

As stated in \cite[Lemma 2.7]{PiSa} this map is surjective. Moreover, for any couple $\mu_1,\,\mu_2\in \cP_2(\R^d)$ one has that 
\begin{align}\label{eq:was-hil}
W_2(\mu_1, \mu_2) = \inf_{\substack{X_1, X_2 \in \mathcal{H}\\ (X_i)_{\#}\mathbb{P} = \mu_i}}\|X_1-X_2\|_{\mathcal{H}}. 
\end{align}
Indeed by \cite[Lemma 2.7]{PiSa}, which guarantees that for any $\gamma\in \Gamma(\mu_1,\mu_2)$, there exists $X_{1,\gamma}, X_{2,\gamma}$, such that $(X_{1,\gamma}, X_{2,\gamma})_{\#}\mathbb{P}= \gamma$ and therefore $\int_{\R^d\times \R^d}|x-y|^2\gamma = \|X_{1,\gamma}- X_{2,\gamma}\|^2_{\mathcal{H}}$. \\
For any $\cF:\cP_2(\R^d)\to (-\infty,+\infty]$, we denote by $\hat{\cF}$ its Lagrangian lifting, that is 
\begin{align*}
\hat \cF:\, &\mathcal{H} \to (-\infty,+\infty],\\
&X\to \cF(X_{\#} \mathbb{P}). 
\end{align*}
\begin{lemma}
Let $\cF:\cP_2(\R^d)\to(-\infty,+\infty]$ be proper, lower semicontinuous and totally convex. Then $\prox_{\tau \cF}^W(X_{\#}\mathbb{P})=\prox^{L^2}_{\tau \hat \cF}(X)_{\#}\mathbb{P}$.
\end{lemma}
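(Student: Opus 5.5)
Set $\mu=X_\sharp\mathbb{P}$ and let $\bar\nu=\prox_{\tau\cF}^W(\mu)$. The latter is well defined and single valued by the Lemma in Section~\ref{subsec:prox}, since total convexity implies convexity along outer generalized geodesics. The plan has three parts: show that $\hat\cF$ is a proper, lower semicontinuous, convex functional on the Hilbert space $\mathcal{H}=L^2(\Omega,\mathbb{P};\R^d)$, so that $\prox^{L^2}_{\tau\hat\cF}(X)$ exists and is unique; show that the infimum of the Lagrangian problem equals the infimum of the Wasserstein problem; and deduce that the law of the Hilbert minimizer is the Wasserstein minimizer.

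\emph{Step 1 (convexity and lower semicontinuity of $\hat\cF$).} Take $X_0,X_1\in\mathcal{H}$ and let $\gamma=(X_0,X_1)_\sharp\mathbb{P}\in\Gamma(\mu_0,\mu_1)$. Then $((1-s)X_0+sX_1)_\sharp\mathbb{P}=(\pi_s^{1\to2})_\sharp\gamma$. Total convexity \eqref{eqn:totconv} with $\lambda=0$ therefore gives exactly $\hat\cF((1-s)X_0+sX_1)\le(1-s)\hat\cF(X_0)+s\hat\cF(X_1)$. For lower semicontinuity, if $X_n\to X$ in $\mathcal{H}$ then $W_2(\iota(X_n),\iota(X))\le\|X_n-X\|_{\mathcal{H}}\to0$. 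Lower semicontinuity of $\cF$ then yields $\hat\cF(X)\le\liminf\hat\cF(X_n)$. Properness is immediate from the surjectivity of $\iota$. Hence $\hat\cF+\frac1{2\tau}\|X-\cdot\|^2$ is strongly convex and lsc, and it has a unique minimizer $Y^*=\prox^{L^2}_{\tau\hat\cF}(X)$.

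\emph{Step 2 (comparison of the two problems).} For every $Y\in\mathcal{H}$ we have $\hat\cF(Y)+\frac1{2\tau}\|X-Y\|^2\ge\cF(Y_\sharp\mathbb{P})+\frac1{2\tau}W_2^2(\mu,Y_\sharp\mathbb{P})$, so the Hilbert infimum dominates the Wasserstein infimum. For the reverse inequality, fix an optimal plan $\gamma\in\Gamma_o(\mu,\bar\nu)$. We need $\bar Y\in\mathcal{H}$ such that $(X,\bar Y)_\sharp\mathbb{P}=\gamma$ with the \emph{given} first component $X$. Then $\|X-\bar Y\|^2=W_2^2(\mu,\bar\nu)$ and the two infima coincide. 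Once this is established, $\bar Y$ is a minimizer of the Hilbert problem, so $\bar Y=Y^*$ by uniqueness, and $(Y^*)_\sharp\mathbb{P}=\bar\nu$, which is the claim.

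\emph{Main obstacle: constructing $\bar Y$ with $X$ fixed.} \cite[Lemma 2.7]{PiSa} produces a pair $(X_\gamma,Y_\gamma)$ with law $\gamma$, but here the first component must be the prescribed $X$. If $\mathbb{P}$ is nonatomic conditionally on $X$ (for instance if $\Omega$ is standard and $X$ does not generate the whole $\sigma$-algebra in an atomic way), one can disintegrate $\gamma=\int\gamma_x\,d\mu(x)$ and realize $\bar Y$ as a measurable selection $\bar Y(\omega)=T(X(\omega),U(\omega))$. Here $U$ is uniform and independent of $X$, and $T(x,\cdot)$ pushes Lebesgue measure onto $\gamma_x$. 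In general such a $U$ may not exist, for example if $X$ is injective on a standard space, so I would instead argue by approximation. By \cite[Lemma 2.7]{PiSa}, for any $\varepsilon>0$ there is $Y_\varepsilon$ with $Y_\varepsilon{}_\sharp\mathbb{P}=\bar\nu$ and $\|X-Y_\varepsilon\|\le W_2(\mu,\bar\nu)+\varepsilon$. This is the standard fact that couplings realized by maps from a fixed nonatomic variable are dense (it is essentially \eqref{eq:was-hil} with one variable fixed, obtained by approximating $\gamma$ with plans induced by measure-preserving rearrangements). We have $\hat\cF(Y_\varepsilon)=\cF(\bar\nu)$, so the Hilbert infimum is at most the Wasserstein infimum plus $O(\varepsilon)$, and hence the two infima coincide. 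Moreover, $(Y_\varepsilon)$ is a minimizing sequence for a $\frac1\tau$-strongly convex functional, so it is Cauchy and converges strongly to $Y^*$. Since $\iota$ is $1$-Lipschitz, $(Y^*)_\sharp\mathbb{P}=\lim(Y_\varepsilon)_\sharp\mathbb{P}=\bar\nu$. This approximation route avoids the exact attainment issue and is the version I would write.
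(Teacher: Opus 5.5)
Your proof is correct, but it takes a different route from the paper. The paper proves the lemma in one line: it invokes \cite[Proposition 5.4, point (1)]{CSS_2023}, which identifies the resolvent $\mathrm{J}_\tau$ of the lifted totally convex functional with the Wasserstein proximal step, and notes that $\mathrm{J}_\tau=\prox^{L^2}_{\tau\hat\cF}$.

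You prove it from scratch, in four steps:
\begin{itemize}
\item convexity and lower semicontinuity of $\hat\cF$, from total convexity and the $1$-Lipschitz property of $\iota$;
\item the easy comparison of infima, from $\|X-Y\|_{\mathcal H}\ge W_2(X_\sharp\mathbb{P},Y_\sharp\mathbb{P})$;
\item the reverse comparison, by approximately realizing an optimal plan with the first component $X$ prescribed;
\item identification of the minimizer, via the $\frac1\tau$-strong convexity bound $\|Y_\varepsilon-Y^*\|^2\le 2\tau\bigl(\Phi(Y_\varepsilon)-\Phi(Y^*)\bigr)$.
\end{itemize}
The citation buys brevity. Your argument buys transparency and works on any nonatomic $(\Omega,\mathbb{P})$, not only in the standard Borel setting of \cite{CSS_2023}.

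It also gives more than the statement. Running your approximation with an arbitrary $\nu$ in place of $\bar\nu$ shows that the Hilbert infimum is at most the Wasserstein infimum, so $(Y^*)_\sharp\mathbb{P}$ is itself a Wasserstein minimizer. Your minimizing-sequence argument then shows that every Wasserstein minimizer equals $(Y^*)_\sharp\mathbb{P}$. Hence existence and uniqueness of $\prox^W_{\tau\cF}(\mu)$ come for free, and you do not need the lemma of Section~\ref{subsec:prox}, which is stated under weak lower semicontinuity.

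The one step with real content is the density claim with $X$ fixed. Do not rest it on \cite[Lemma 2.7]{PiSa} in the form the paper quotes, which only produces \emph{some} pair with law $\gamma$. The fixed-$X$ version holds on any nonatomic space, and its proof needs no variable independent of $X$:
\begin{itemize}
\item Partition $\R^d$ into cubes $Q_i$ of side $\delta\le 1$ with centers $c_i$.
\item Let $\nu_i$ be the second marginal of $\gamma$ restricted to $Q_i\times\R^d$.
\item On each $A_i=X^{-1}(Q_i)$, where the normalized restriction of $\mathbb{P}$ is still nonatomic and so carries a uniform variable, pick any $Y$ with $Y_\sharp(\mathbb{P}|_{A_i})=\nu_i$.
\end{itemize}
On each cell, both $\gamma$ and $(X,Y)_\sharp\mathbb{P}$ are couplings of $\mu|_{Q_i}$ and $\nu_i$. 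For $x\in Q_i$, expand
\[
\|x-y\|^2=\|c_i-y\|^2+2\langle x-c_i,c_i-y\rangle+\|x-c_i\|^2 .
\]
This shows that the two total costs differ by at most $C\delta$, where $C$ depends only on $d$ and $\int\|x-y\|\,d\gamma$. Taking $\gamma\in\Gamma_o(\mu,\bar\nu)$ and $\delta\to0$ gives your $Y_\varepsilon$.
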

\begin{proof}
It follows directly from \cite[Proposition 5.4, point (1)]{CSS_2023} by recalling that the resolvent operator of $\hat{\mathcal{F}}$, $\mathrm{J}_{\tau}$ is $\prox^{L^2}_{\tau\hat{\mathcal{F}}}$.
\end{proof}
\begin{proof}[Proof. of Proposition \ref{prop:tot-conv-non exp}]
Let $\mu_1,\mu_2\in \cP_2(\R^d)$. Let $X_1,\,X_2$ be such that $\mu_i=X_i\sharp \mathbb{P}$ for $i=1,2$. By the surjectivity of the map $\iota$, at least one couple exists. One has
\begin{align*}
&W_2(\prox_{\tau \cF}^{W}(\mu_1),\prox_{\tau \cF}^{W}(\mu_2))= W_2^2(\prox^{L^2}_{\tau \hat \cF}(X_1)_{\#}\mathbb{P},\prox^{L^2}_{\tau \hat \cF}(X_2)_{\#}\mathbb{P}) \\&\leq\|\prox^{L^2}_{\tau \hat \cF}(X_1)-\prox^{L^2}_{\tau \hat \cF}(X_2)\|\leq \|X_1-X_2\|_{\mathcal{H}},
\end{align*}
where the last inequality follows from the non-expansivity   of the proximal operator of any convex function in Hilbert spaces. Therefore, optimizing in $X_1$ and $X_2$ and using \eqref{eq:was-hil} we conclude
\begin{align*}
W_2(\text{prox}_{\tau \cF}^{W}(\mu_1), \text{prox}_{\tau \cF}^{W}(\mu_2)) \leq \inf_{\substack{X_1, X_2 : \\ X_i \sharp \mathbb{P} = \mu_i}} \|X_1- X_2\|_{\mathcal{H}}=W_2(\mu_1,\mu_2).  
\end{align*}
\end{proof}
\begin{corollary}
Let $K\subseteq \cP_2(\R^d)$ be a totally convex set, that is for any $\mu_1$ and $\mu_2$ in $K$, and any $\gamma\in \Gamma(\mu_1,\mu_2)$, $\mu_t\coloneqq (\pi_t)_{\#}\gamma\in K$ for any $t\in [0,1]$. Then $\proj_K$, the $W_2$-projection on $K$ is $1$-Lipschitz.
\end{corollary}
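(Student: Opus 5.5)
The plan is to reduce the corollary to Proposition~\ref{prop:tot-conv-non exp}, applied to the indicator functional $\mathbf{1}_K$ defined in \eqref{eqn:indicatorset}, and then use the identification \eqref{eq:prox=proj} of its proximal map with $\proj_K$.

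First I will check that $\mathbf{1}_K$ satisfies the hypotheses of the proposition. Total convexity is immediate from the assumption on $K$. Take $\mu_0,\mu_1$ and $\gamma\in\Gamma(\mu_0,\mu_1)$. If either endpoint lies outside $K$, the right-hand side of \eqref{eqn:totconv} is $+\infty$ and there is nothing to prove. If both lie in $K$, then $\mu_s=(\pi^{1\to2}_s)_\sharp\gamma\in K$ by hypothesis, so both sides of \eqref{eqn:totconv} vanish with $\lambda=0$. Properness amounts to $K\neq\emptyset$, which I will assume since the projection is otherwise meaningless.

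Lower semicontinuity of $\mathbf{1}_K$ is equivalent to closedness of $K$. This is the one point where the statement is implicit, and I see two ways to handle it.
\begin{itemize}
\item The intended reading is probably that $K$ is closed, since this is needed for $\proj_K$ to be well defined and single valued. In that case $\mathbf{1}_K$ is lower semicontinuous and the proposition applies directly.
\item Alternatively, I would replace $K$ by its $W_2$-closure $\overline K$. If $\mu_0^n\to\mu_0$ and $\mu_1^n\to\mu_1$ in $K$, a coupling $\gamma\in\Gamma(\mu_0,\mu_1)$ can be approximated in $W_2$ on the product space by couplings $\gamma^n\in\Gamma(\mu_0^n,\mu_1^n)$. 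One way to build them is to glue $\gamma$ with optimal plans from $\mu_i^n$ to $\mu_i$. Then $(\pi^{1\to2}_s)_\sharp\gamma^n\to(\pi^{1\to2}_s)_\sharp\gamma$, so $\overline K$ is again totally convex.
\end{itemize}

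With the hypotheses verified, I obtain
\[
W_2\bigl(\proj_K(\mu_1),\proj_K(\mu_2)\bigr)=W_2\bigl(\prox^W_{\tau\mathbf{1}_K}(\mu_1),\prox^W_{\tau\mathbf{1}_K}(\mu_2)\bigr)\le W_2(\mu_1,\mu_2)
\]
for any $\tau>0$. Here I use that $\tau\mathbf{1}_K=\mathbf{1}_K$, so the proximal map does not depend on $\tau$.

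Single-valuedness, which is needed to even write these quantities, follows from the earlier lemma on well-posedness of the proximal map. That lemma applies because total convexity implies convexity along outer generalized geodesics, and the lower semicontinuity it requires is the same issue treated above. I expect no real obstacle beyond this bookkeeping about closedness and lower semicontinuity.
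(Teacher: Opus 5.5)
Your proposal is correct and follows the paper's route: the paper gives the corollary no separate proof, treating it as the special case $\cF=\mathbf{1}_K$ of Proposition~\ref{prop:tot-conv-non exp} combined with the identification \eqref{eq:prox=proj}. You are also right that lower semicontinuity of $\mathbf{1}_K$ needs $K$ to be $W_2$-closed, which the statement leaves implicit; your closure argument is sound, and by uniqueness $\proj_K$ coincides with $\proj_{\overline K}$ wherever $\proj_K$ exists.
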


\begin{remark}\label{rem:totlambda} A similar statement holds for a totally $\lambda$-convex function $\cF$. Using the result in Hilbert spaces \cite[Proposition 3.3]{HoheiselLabordeOberman2020}, for $\tau>0$ such that $\lambda \tau > -1$ we have
\begin{equation}\label{eqn:nonexp-lambda}
    W_2( \prox_{\tau \cF}^{W_2} (\mu_1), \prox_{\tau \cF}^{W_2} (\mu_2)) \leq  \frac 1{1+\lambda \tau}W_2(\mu_1, \mu_2) \qquad \forall \mu_1, \mu_2 \in \P_2(\R^d).
\end{equation}
\end{remark}

%\sara{Esempi di insiemi totalmente convessi}

\subsection{Monotonicity of totally convex functionals with respect to convex order of measures}\label{subsec:totconv-mon}

We investigate here an interesting rigid property of totally convex functions, namely their monotonicity with respect to the convex order (Theorem~\ref{thm:mon-conv}): this will be our main tool to prove that the set of totally convex functions is pretty small and cannot be used to approximate convex functions along generalized geodesics in any sense. The results in this section are presented only for $\lambda=0$ because the statements are much clearer that way: obviously one can use \eqref{eqn:equiv_lambdaconv} to have the corresponding properties for totally $\lambda$-convex functions for $\lambda \neq 0$. 

During the preparation of the manuscript, we were made aware that the result has appeared recently in \cite{Be} (where they claim also to have a necessary and sufficient condition for being totally convex). Being our proof different and more constructive, we decided to keep it.

\subsubsection*{Monotonicity with respect to convex order}
We recall that for $\mu\in \cP_2(\R^d)$ the barycenter of $\mu$ is $M(\mu)\coloneqq\int_{\R^d}x\,\d \mu$ and the variance of $\mu$ is $\Var(\mu)\coloneqq \int_{\R^d}\|x-M(\mu)\|^2\, \d\mu$. 
\begin{prop}\label{prop:delta-bar}
Let $\cG: \cP_2(\R^d)\to (-\infty,+\infty]$ proper, lower semicontinuous and totally convex. 
%totally {\color{red} $\lambda$-}convex
Then \[\cG(\delta_{M(\mu)})\leq \cG(\mu) \quad \text{for every} \quad \mu\in \cP_2(\R^d).\] 
\end{prop}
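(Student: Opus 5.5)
We prove $\cG(\delta_{M(\mu)})\le\cG(\mu)$ by applying total convexity along curves induced by non-optimal couplings of $\mu$ with itself, producing a sequence of measures with the same barycenter and shrinking variance, and then passing to the limit with lower semicontinuity. We may assume $\cG(\mu)<+\infty$, otherwise there is nothing to prove.

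\textbf{Step 1 (one averaging step).} Take the independent coupling $\gamma=\mu\otimes\mu\in\Gamma(\mu,\mu)$ and set $\mu^{(1)}:=(\pi^{1\to2}_{1/2})_\#\gamma$, the law of $(X+X')/2$ with $X,X'$ i.i.d.\ of law $\mu$. Total convexity \eqref{eqn:totconv} with $s=\tfrac12$ gives
\[
\cG(\mu^{(1)})\le \tfrac12\cG(\mu)+\tfrac12\cG(\mu)=\cG(\mu).
\]
Moreover $M(\mu^{(1)})=M(\mu)$, and by independence $\Var(\mu^{(1)})=\tfrac14(\Var(\mu)+\Var(\mu))=\tfrac12\Var(\mu)$.

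\textbf{Step 2 (iteration).} Repeat Step 1 starting from $\mu^{(n)}$. Then $\mu^{(n)}$ is the law of the average of $2^n$ i.i.d.\ copies of $X$, and
\[
\cG(\mu^{(n)})\le\cG(\mu),\qquad M(\mu^{(n)})=M(\mu),\qquad \Var(\mu^{(n)})=2^{-n}\Var(\mu).
\]

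\textbf{Step 3 (limit).} Since $W_2^2(\mu^{(n)},\delta_{M(\mu)})=\Var(\mu^{(n)})\to0$, we have $\mu^{(n)}\to\delta_{M(\mu)}$ in $W_2$. Lower semicontinuity of $\cG$ (with respect to $W_2$, which suffices here) then yields
\[
\cG(\delta_{M(\mu)})\le\liminf_{n\to\infty}\cG(\mu^{(n)})\le\cG(\mu).
\]

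The argument has no real obstacle. The essential input is that total convexity allows the \emph{independent}, non-optimal coupling in Step 1. Along the optimal coupling of $\mu$ with itself, i.e.\ the identity coupling, the induced curve is constant and gives nothing. One only needs to check that the variance computation uses independence and that lower semicontinuity is taken in the $W_2$ topology.
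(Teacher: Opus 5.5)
Your proposal is correct and follows essentially the same approach as the paper. The paper also iterates the curve induced by the independent coupling $\mu_n\otimes\mu_n$, using a general $t\in(0,1)$ with variance factor $1-2t(1-t)$ (your choice $t=\tfrac12$ gives the factor $\tfrac12$). It then concludes from $W_2^2(\mu_n,\delta_{M(\mu)})=\Var(\mu_n)\to 0$, monotonicity of $\cG(\mu_n)$ from total convexity, and lower semicontinuity.
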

\begin{proof}
We construct a monotone decreasing sequence recursively defined $(\mu_{n})_{n\in \mathbb{N}}$:  $\mu_0\coloneqq\mu$, 
\begin{align*}
\mu_{n+1}\coloneqq\pi^t_{\#}\gamma_n, \quad \text{with } \gamma_n\coloneqq\mu_n \otimes \mu_n, \qquad \text{for every } n\in \mathbb{N}.
\end{align*}
Then by Lemma~\ref{lem:decreasing_variance} $\mu_{n+1}\preccurlyeq_C \mu_{n}$, and $\mu_n\to \delta_{M(\mu)}$. %If $\mu=\delta_x$ for some $x\in \R^d$, then  $\mu_n=\delta_x$ for any $n\in \mathbb{N}$ and there is nothing to prove. Otherwise, 
In fact %, from Lemma\ref{lem:decreasing_variance} 
inductively we have $M(\mu_n)=M(\mu)$ for every $n\in \N$ and $\Var(\mu_n)=(1-2(1-t)t)^n\Var(\mu)$ so $\Var(\mu_n)\to 0$. Since we have $W_2^2(\mu_n,\delta_{M(\mu)})=W_2^2(\mu_n,\delta_{M(\mu_n)})= \Var(\mu_n)$, we can conclude that $\mu_n\to \delta_{M(\mu)}$. Moreover by total convexity $\cG(\mu_{n+1})\leq \cG(\mu_n)$. The result follows then by lower semicontinuity of $\cG$.  
\end{proof}
We observe that for any measure $\mu$, Jensen's inequality yields $\delta_{M(\mu)}\preccurlyeq_C\mu$. In fact  the any Dirac measure is a minimal element with respect to the convex order. Moreover  for any $\mu$ the Dirac delta concentarted in the barycenter of $\mu$, serves  as the unique minimum among the set of lower bounds of $\mu$.

In general, the procedure developed herein to construct a sequence of measures starting from $\mu$ and monotonically decreasing in convex order to $\delta_{M(\mu)}$, can be naturally extended to construct a decreasing sequence 'from $\mu$ to $\nu$' witn $\nu\preccurlyeq_C\mu$. The following theorem 
which asserts that totally convex functionals are monotonic with respect to the convex order of measures then follow.
\begin{theorem}\label{thm:mon-conv}
Let $\cG:\P_{2}(\R^d)\to (-\infty,+\infty]$ proper, lower semicontinuous and totally convex. Then for any $\mu,\nu\in\P_{2}(\R^d)$ satisfying $\mu\preccurlyeq_C\nu$,
\begin{align*}
\cG(\mu)\leq \cG(\nu).
\end{align*}
\end{theorem}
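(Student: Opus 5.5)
The plan is to mimic Proposition~\ref{prop:delta-bar} fiberwise, using the Markov kernel characterization of the convex order (Theorem~\ref{thm:char_of_con_ord}). Since $\mu\preccurlyeq_C\nu$, fix a kernel $\{\eta_z\}_{z}$ with $M(\eta_z)=z$ and $\nu=\int\eta_z\,d\mu(z)$. I will build a sequence $\nu_n$ with $\nu_0=\nu$, $\cG(\nu_{n+1})\le\cG(\nu_n)$ and $\nu_n\to\mu$ in $W_2$. Lower semicontinuity then gives $\cG(\mu)\le\liminf\cG(\nu_n)\le\cG(\nu)$.

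\textbf{Construction.} Define the coupling
\[
\gamma_0\coloneqq\int_{\R^d}\eta_z\otimes\eta_z\,d\mu(z)\in\Gamma(\nu,\nu),
\]
which pairs two conditionally independent copies $Y,Y'$ of $Y$ given $X$. In the random-variable language this means $\mathbb E(Y\mid X)=X$, with $Y,Y'$ i.i.d.\ given $X$. Set $\nu_1\coloneqq(\pi^{1\to2}_{1/2})_\#\gamma_0$, the law of $\tfrac12(Y+Y')$. Total convexity along the curve induced by $\gamma_0$ gives
\[
\cG(\nu_1)\le\tfrac12\cG(\nu)+\tfrac12\cG(\nu)=\cG(\nu).
\]
The key point is that $\nu_1$ again disintegrates over $\mu$: its kernel is $\eta^1_z\coloneqq(\pi^{1\to2}_{1/2})_\#(\eta_z\otimes\eta_z)$. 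This kernel still has barycenter $z$, and by the variance computation of Lemma~\ref{lem:decreasing_variance} (with $t=1/2$) it satisfies $\Var(\eta^1_z)=\tfrac12\Var(\eta_z)$. Iterating, we get kernels $\eta^n_z$ with $M(\eta^n_z)=z$ and $\Var(\eta^n_z)=2^{-n}\Var(\eta_z)$, and measures $\nu_n=\int\eta^n_z\,d\mu(z)$ with $\cG(\nu_{n+1})\le\cG(\nu_n)$. Equivalently, $\nu_{n+1}$ is the law of an average of two conditionally i.i.d.\ copies of $Y_n$ given $X$, i.e.\ the law of the average of $2^{n}$ conditionally i.i.d.\ copies of $Y$.

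\textbf{Convergence.} Couple $\mu$ and $\nu_n$ through the plan $\int\delta_z\otimes\eta^n_z\,d\mu(z)$. This gives
\[
W_2^2(\nu_n,\mu)\le\int\Var(\eta^n_z)\,d\mu(z)=2^{-n}\int\Var(\eta_z)\,d\mu(z)\le 2^{-n}M_2(\nu)\to0.
\]
The last bound holds because $\int\Var(\eta_z)\,d\mu=M_2(\nu)-M_2(\mu)<\infty$.

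\textbf{Main obstacle.} The delicate point is measurability and admissibility. I need to check that $z\mapsto\eta_z\otimes\eta_z$, and the iterated kernels, are weakly Borel, so that $\gamma_0$ is a well-defined element of $\Gamma(\nu,\nu)$ with both marginals equal to $\nu$. This follows from the weak Borel property of $z\mapsto\eta_z$, checked on product rectangles and extended by a monotone class argument. Pushforwards of measurable kernels under continuous maps remain measurable. Alternatively, the whole argument can be run on a nonatomic probability space with conditionally i.i.d.\ random variables, which avoids kernel bookkeeping. The barycenter and variance identities for $\eta^n_z$ are exactly the ones already used in Proposition~\ref{prop:delta-bar}, applied to each fiber, so that proposition is just the special case $\mu=\delta_{M(\nu)}$.
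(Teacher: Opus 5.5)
Your proposal is correct and is essentially the paper's own proof. It uses the same fiberwise iteration $\eta^n_z=(\pi^{1\to2}_t)_\sharp(\eta^{n-1}_z\otimes\eta^{n-1}_z)$ over the kernel of Theorem~\ref{thm:char_of_con_ord}, the same total convexity step along $\int\eta^{n-1}_z\otimes\eta^{n-1}_z\,d\mu(z)\in\Gamma(\nu_{n-1},\nu_{n-1})$, and the same $W_2$ bound through the plan $\int\delta_z\otimes\eta^n_z\,d\mu(z)$; the only difference is that the paper keeps a general $t\in(0,1)$ where you fix $t=\tfrac12$.

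One cosmetic slip: $\nu_{n+1}$ is the law of the average of $2^{n+1}$ (not $2^n$) conditionally i.i.d.\ copies of $Y$. This is what your formula $\Var(\eta^n_z)=2^{-n}\Var(\eta_z)$ actually encodes.
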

 We  present a constructive and direct proof of of the result.  We begin with some technical lemmas that we need.
 \begin{lemma}\label{lem:decreasing_variance}
Let $\mu\in \cP_2(\R^d)$ and consider the product plan $\gamma = \mu \otimes \mu$. Then
\[\pi^t_{\sharp} \gamma \preccurlyeq_C \mu, \qquad M(\pi^t_{\#}\gamma)=M(\mu), \quad \text{and} \quad \Var(\pi^t_{\#}\gamma)= (1-2t(1-t))\Var(\mu).\]
In particular, it holds $\Var(\pi^t_{\#}\gamma) \leq \Var(\mu)$ and $\Var(\pi^t_{\#}\gamma) = \Var(\mu)$ if and only if $\mu$ is a delta.
\end{lemma}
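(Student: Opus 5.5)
Throughout I use a probabilistic representation. Take $X,Y$ independent with law $\mu$ and set $Z\coloneqq (1-t)X+tY$, so that $\pi^t_\sharp\gamma$ is the law of $Z$. Each of the four claims of Lemma~\ref{lem:decreasing_variance} then follows from a short computation with $X,Y,Z$.

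\textbf{Barycenter and variance.} Linearity of expectation gives $M(\pi^t_\sharp\gamma)=\mathbb{E}Z=(1-t)M(\mu)+tM(\mu)=M(\mu)$. For the variance, write $m=M(\mu)$ and center: $Z-m=(1-t)(X-m)+t(Y-m)$. Expanding the square, the cross term is $2t(1-t)\mathbb{E}\langle X-m,Y-m\rangle$, which vanishes by independence since $\mathbb{E}(X-m)=0$. Hence
\[
\Var(Z)=\bigl((1-t)^2+t^2\bigr)\Var(\mu)=\bigl(1-2t(1-t)\bigr)\Var(\mu).
\]

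\textbf{Convex order.} For any convex $f$, Jensen's inequality pointwise gives $f((1-t)X+tY)\le (1-t)f(X)+tf(Y)$. Taking expectations, and using that $X$ and $Y$ both have law $\mu$, yields $\int f\,d\pi^t_\sharp\gamma\le\int f\,d\mu$, which is exactly $\pi^t_\sharp\gamma\preccurlyeq_C\mu$. One integrability point must be checked: a convex $f$ is bounded below by an affine function, so the negative parts are integrable for measures in $\P_2$ and the integrals are well defined in $(-\infty,+\infty]$. This integrability check is the only mildly delicate step in the argument.

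\textbf{Equality case.} The inequality $\Var(\pi^t_\sharp\gamma)\le\Var(\mu)$ is immediate from the formula, since $0\le 2t(1-t)\le\tfrac12$. For $t\in(0,1)$ we have $1-2t(1-t)<1$, so equality forces $\Var(\mu)=0$, which means $\mu=\delta_{m}$. Conversely, if $\mu$ is a Dirac mass, both variances are $0$. At $t\in\{0,1\}$ the measure $\pi^t_\sharp\gamma$ equals $\mu$, so equality holds trivially. The "if and only if" statement is therefore to be read for $t\in(0,1)$, which is the case used in Proposition~\ref{prop:delta-bar}.
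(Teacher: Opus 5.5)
Your proof is correct and follows essentially the paper's route: Jensen's inequality for the convex order, linearity for the barycenter, and an explicit second-moment computation for the variance. Your direct expansion with the vanishing cross term is equivalent to the paper's use of the identity $\|(1-t)a+tb\|^2=(1-t)\|a\|^2+t\|b\|^2-t(1-t)\|a-b\|^2$ combined with $\iint\|x-y\|^2\,d(\mu\otimes\mu)=2\Var(\mu)$, and it has the small advantage of giving the exact equality directly. Your caveat that the ``if and only if'' requires $t\in(0,1)$ is correct; the paper assumes this tacitly, since it only applies the lemma with $0<t<1$.
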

\begin{proof}
Notice that for every convex function $f$ we have
$$\int f d \pi^t_{\sharp}\gamma = \iint f(tx+(1-t)x') \d \mu \otimes \mu \leq \iint tf(x)+(1-t)f(x') \d \mu \otimes \mu = \int f \, d \mu.$$
In particular using $f(x)= \pm x_i$ we get also $M(\pi^t_{\#}\gamma)=M(\mu)$. For the variance we have
%\begin{align*}
%M(\pi^t_{\#}\gamma)= \int_{\R^d} z \, \d\pi^t_{\#}\gamma(z) = \iint_{\R^d\times\R^d} \left((1-t)x+ty\right)\, \d\mu(x)\d\mu(y)= M(\mu).
%\end{align*}
%We have
\begin{align*}
\Var(\pi^t_{\#}\gamma) & = \int_{\R^d} \|z-M(\mu)\|^2 \, \d\pi^t_{\#}\gamma(z) = \iint_{\R^d\times\R^d} \|(1-t)x+ty - M(\mu)\|^2 \, \d\mu(x)\d\mu(y)\\
& = \iint_{\R^d\times\R^d} \|(1-t)(x-M(\mu))+t(y - M(\mu))\|^2 \, \d\mu(x)\d\mu(y)\\
& \leq (1-t)\int_{\R^d} \|x-M(\mu)\|^2 \, \d\mu(x)+t\int_{\R^d} \|y-M(\mu)\|^2 \, \d\mu(y) \\
& -{t(1-t)}\iint_{\R^d\times\R^d} \|x-y\|^2 \, \d\mu(x)\d\mu(y), 
\end{align*}
by using the $2$-convexity of $\|\cdot \|^2$. On the other hand 
\begin{align*}
\iint_{\R^d\times\R^d} \|x-y\|^2 \, \d\mu(x)\d\mu(y) & = \int_{\R^d} \|x\|^2 \, \d\mu(x) + \int_{\R^d} \|y\|^2 \, \d\mu(x)-\iint_{\R^d\times\R^d} 2\langle x , y \rangle \, \d\mu(x)\d\mu(y)\\
& = \int_{\R^d} \|x\|^2 \, \d\mu(x) + \int_{\R^d} \|y\|^2 \, \d\mu(x)-2\left\langle \int_{\R^d} x \, \d\mu(x), \int_{\R^d} y \, \d\mu(y) \right\rangle\\
& =  2\left(\int_{\R^d} \|x\|^2 \, \d\mu(x)-\langle M(\mu), M(\mu) \rangle\right)\\
& = 2\int_{\R^d} \|x-M(\mu)\|^2 \, \d\mu(x)=2 \Var(\mu). 
\end{align*}
From the computations, we deduce that equality holds if and only if $\Var(\mu)=0$, that is $\mu= \delta_{x}$ for some $x\in \R^d$. 
\end{proof}

\begin{lemma}
Let $\nu$ be in $\P_{2}(\R^d)$, and $\mu\preccurlyeq_C\nu$, $\nu= \int_{\R^d}\eta_z \d \mu(z)$.  Then $\int_{\R^d}Var((\pi_{t})_{\sharp}(\eta_z\otimes \eta_z))\d \mu(z)\leq (1-2t(1-t))\int_{\R^d}Var(\eta_z)\d \mu(z)$ and $\int_{\R^d}Var(\eta_z)\d \mu(z)<+\infty$.
\end{lemma}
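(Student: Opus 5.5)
The plan is to apply Lemma~\ref{lem:decreasing_variance} fiberwise and then integrate against $\mu$. The finiteness claim should come first, since the integrated inequality only makes sense once the right-hand side is known to be finite.

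For the finiteness, I use that each $\eta_z$ has barycenter $z$ by Theorem~\ref{thm:char_of_con_ord}. Then, whenever $\eta_z\in\P_2(\R^d)$,
\[\Var(\eta_z)=\int \|x\|^2\,\d\eta_z(x)-\|z\|^2 .\]
Integrating in $z$ against $\mu$ and using $\nu=\int \eta_z\,\d\mu(z)$ gives
\[\int \Var(\eta_z)\,\d\mu(z)=\int_{\R^d}\|x\|^2\,\d\nu(x)-\int_{\R^d}\|z\|^2\,\d\mu(z)=M_2(\nu)-M_2(\mu)\le M_2(\nu)<+\infty .\]
To justify this, apply the disintegration identity to the nonnegative Borel function $\|x\|^2$, i.e.\ $\int \|x\|^2\,\d\nu=\int\!\int \|x\|^2\,\d\eta_z(x)\,\d\mu(z)$, which holds by monotone class from the defining property on sets. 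It follows that $\int\|x\|^2\,\d\eta_z<\infty$ for $\mu$-a.e.\ $z$, so the identity for $\Var(\eta_z)$ is legitimate $\mu$-almost everywhere, and we may modify $\eta_z$ on a $\mu$-null set, for example replacing it by $\delta_z$. Measurability of $z\mapsto\Var(\eta_z)$ follows from the weak Borel property, since $\Var(\eta_z)$ is obtained from integrals of Borel functions against $\eta_z$, and $\int x\,\d\eta_z=z$.

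For the inequality, for each $z$ with $\eta_z\in\P_2$, Lemma~\ref{lem:decreasing_variance} applied to $\eta_z$ gives the pointwise identity
\[\Var\big((\pi_t)_\sharp(\eta_z\otimes\eta_z)\big)=(1-2t(1-t))\Var(\eta_z).\]
Integrating in $\mu$ yields the claim, in fact with equality. The one technical point is measurability of $z\mapsto \Var((\pi_t)_\sharp(\eta_z\otimes\eta_z))$. This is unnecessary to check separately, since by the pointwise identity the function equals a constant multiple of the measurable function $z\mapsto\Var(\eta_z)$ $\mu$-a.e.

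The main obstacle is therefore only the measure-theoretic bookkeeping: extending the kernel identity from sets to nonnegative functions, and handling the $\mu$-null set where $\eta_z\notin\P_2$. Both are routine.
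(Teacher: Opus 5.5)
Your proof is correct and follows the paper's argument. The inequality comes from applying Lemma~\ref{lem:decreasing_variance} to each $\eta_z$ and integrating against $\mu$, and finiteness comes from the identity $\int \Var(\eta_z)\,\d\mu(z)=M_2(\nu)-M_2(\mu)$, which uses $M(\eta_z)=z$. Your extra measure-theoretic bookkeeping is accurate and is left implicit in the paper: you extend the kernel identity to nonnegative functions and discard the $\mu$-null set where $\eta_z\notin\P_2$. Your observation that the inequality actually holds with equality is also correct.
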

\begin{proof}
The first inequality follows from Lemma \ref{lem:decreasing_variance}. 
For the second 
\begin{align*}
&\int_{\R^d}Var(\eta_z)\d \mu(z)= \int_{\R^d}\int_{\R^d}|x-M(\eta_z)|^2\d \eta_z\mu(z)\\
&=  \int_{\R^d}\int_{\R^d}|x|^{2}\d \eta_z\d \mu(z)- \int_{\R^d}\int_{\R^d}|z|^2\d \eta_z\d \mu(z)=  \left( M_2(\nu)-M_2(\mu)\right).
\end{align*}
\end{proof}
\begin{prop}\label{prop:mon-converg-sequence}
Let $\nu$ be in $\P_{2}(\R^d)$, and $\mu\preccurlyeq_C\nu$, $\nu= \int_{\R^d}\eta_z \d \mu(z)$. Fix $0<t<1$ and consider the sequence of probability measures defined recursively as follows: $\mu_0\coloneqq \nu$, 
\begin{align*}
\eta_{z}^{1} &\coloneqq (\pi^t)_{\sharp}(\eta_{z}\otimes\eta_{z}),
\qquad & \mu_1 &\coloneqq \int_{\R^d}\eta_{z}^{1}\,\d \mu(z),\\
\eta^{n}_{z} &\coloneqq (\pi^t)_{\sharp}(\eta^{n-1}_{z}\otimes\eta^{n-1}_{z}) \text{for } n\geq 2
\qquad & \mu_n &\coloneqq \int_{\R^d}\eta_{z}^{n}\,\d \mu(z) \quad \text{for } n\geq 2.
\end{align*}
Then  $\{\mu_{n}\}_{n\in \mathbb{N}}$  is decreasing for the convex order, that is $\mu_{n}\preccurlyeq_C \mu_{n-1}$ for any $n\in \mathbb{N}$ and  $\mu_{n}\to \mu$ in $(\P_2(\R^d),W_2)$.
\end{prop}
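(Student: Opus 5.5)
The plan is to prove both claims fiber by fiber over the kernel $\{\eta_z\}$ and then integrate in $\mu$. We work with the disintegrations at every step and never touch the global measures directly.

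\textbf{Convex order.} We argue by induction, using Lemma~\ref{lem:decreasing_variance} on each fiber. Since $\eta^n_z=(\pi^t)_\sharp(\eta^{n-1}_z\otimes\eta^{n-1}_z)$ with $\eta^0_z:=\eta_z$, that lemma gives $\eta^n_z\preccurlyeq_C\eta^{n-1}_z$ and $M(\eta^n_z)=M(\eta^{n-1}_z)=z$ for every $z$. Take any convex $f$ with at most quadratic growth, so that all integrals are finite because every measure involved has second moment bounded by $M_2(\nu)$. Then
\[
\int f\,d\mu_n=\int\!\!\int f\,d\eta^n_z\,d\mu(z)\le\int\!\!\int f\,d\eta^{n-1}_z\,d\mu(z)=\int f\,d\mu_{n-1}.
\]
For a general convex $f$, which is bounded below by an affine function, we pass to the limit by monotone approximation or truncation. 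Alternatively, one can note that $z\mapsto(\pi^t)_\sharp(\eta^{n-1}_z\otimes\eta^{n-1}_z)$ is a kernel built from a kernel, and compose kernels with the barycenter property to invoke Theorem~\ref{thm:char_of_con_ord} directly. Measurability of $z\mapsto\eta^n_z$ is inherited inductively, since products and push-forwards of a weakly Borel family are weakly Borel.

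\textbf{Convergence.} We couple $\mu$ and $\mu_n$ through the plan $\gamma_n:=\int\delta_z\otimes\eta^n_z\,d\mu(z)\in\Gamma(\mu,\mu_n)$. Because $M(\eta^n_z)=z$,
\[
W_2^2(\mu_n,\mu)\le\int\!\!\int\|x-z\|^2\,d\eta^n_z(x)\,d\mu(z)=\int\Var(\eta^n_z)\,d\mu(z).
\]
Iterating the identity in Lemma~\ref{lem:decreasing_variance} gives $\Var(\eta^n_z)=(1-2t(1-t))^n\Var(\eta_z)$. By the preceding lemma, $\int\Var(\eta_z)\,d\mu=M_2(\nu)-M_2(\mu)<\infty$, so
\[
W_2^2(\mu_n,\mu)\le(1-2t(1-t))^n\bigl(M_2(\nu)-M_2(\mu)\bigr)\to0,
\]
since $0<1-2t(1-t)<1$ for $t\in(0,1)$.

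\textbf{Main obstacle.} The only delicate points are technical: measurability of the iterated kernels, and justifying the convex-order inequality for convex $f$ that are not integrable. I would handle the latter by restricting to convex $f$ bounded below by an affine function. This suffices because every convex function on $\R^d$ satisfies that bound, and its integral against a measure in $\P_2$ is well defined in $(-\infty,+\infty]$. The rest is direct from Lemma~\ref{lem:decreasing_variance}.
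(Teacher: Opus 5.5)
Your proposal is correct and follows the paper's proof. The convex-order step is the same integrated inequality $f((1-t)x+ty)\le(1-t)f(x)+tf(y)$, which you package as Lemma~\ref{lem:decreasing_variance} on each fiber. The convergence step uses the same plan $\int\delta_z\otimes\eta^n_z\,d\mu(z)$ (the paper's $\int(\mathrm{Id},T_z)_\sharp\eta^n_z\,d\mu(z)$ with coordinates swapped), together with $M(\eta^n_z)=z$ and the fiberwise variance contraction.

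Your rate $(1-2t(1-t))^n$ is the exact one from Lemma~\ref{lem:decreasing_variance}; the paper writes the weaker but still valid $(1-t(1-t))^n$. Your integrability and measurability remarks fill in details the paper leaves implicit. Drop the side remark about composing kernels, though: the kernel $z\mapsto\eta^n_z$ witnesses $\mu\preccurlyeq_C\mu_n$, not $\mu_n\preccurlyeq_C\mu_{n-1}$. Your main argument does not need it.
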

\begin{proof}
First we observe that 
\begin{align*}
\mu_n= \pi^t_{\#}\left(\int_{\R^d}\eta^{n-1}_{z}\otimes\eta^{n-1}_{z}\d\mu(z)\right), 
\end{align*}
and therefore for any $f$ convex 
\begin{align*}
\int_{\R^d}f(x)\, \d\mu_n(x)&= \int_{\R^d}\iint_{\R^d\times\R^d}f((1-t)x+ty))\, \d\eta^{n-1}_{z}\otimes\eta^{n-1}_{z}\d\mu(z)\\&\leq (1-t)\int_{\R^d}\int_{\R^d}f(x) \d\eta^{n-1}_{z}(x)\d\mu(z)+t\int_{\R^d}f(y) \d\eta^{n-1}_{z}(y)\d\mu(z)=\int_{\R^d}f(x)\,\d\mu_{n-1}(x).
\end{align*}
To show the convergence, let $\gamma_n\coloneqq \int_{\R^d}(\mathrm{Id},T_{z})_{\sharp}\eta_{z}^{n}\d \mu(z)$  with $T_{z}(x)\equiv z\, \forall\,  x \in \R^d$ be a plan. Then $\gamma_n\in \Gamma(\mu_n,\mu)$ since 
\begin{align*}
&(\pi^1)_{\#}\gamma_n(A)=\int_{\R^d}(\mathrm{Id},T_z)_{\#}\eta_z^n(A\times \R^d)\,\d\mu(z)=\int_{\R^d}\eta_z^n(A)\,\d\mu(z) = \mu_n(A),\quad \forall A\subseteq \R^{d}\\
&(\pi^2)_{\#}\gamma_n(A)=\int_{\R^d}(\mathrm{Id},T_z)_{\#}\eta_z^n(\R^d\times A)\,\d\mu(z)=\int_{\R^d}\chi_{A}(z)\d\mu(z)= \mu(A)
,\quad \forall A\subseteq \R^{d}. 
\end{align*}

%Moreover, by observing that 
%\begin{align*}
%M(\eta_z^n)=\int_{\R^d}x\, \d(\pi^t)_{\sharp}(\eta^{n-1}_{z}\otimes\eta^{n-1}_{z})(x)=\int_{\R^d\times\R^d}((1-t)x+ty)\, \d\eta^{n-1}_{z}(x)\otimes\eta^{n-1}_{z}(y)=z,  
%\end{align*}
%where we used the fact that thanks to Theorem \ref{thm:char_of_con_ord} for any $z$, $M(\eta_{z})=z$. We conclude again by using the again  Theorem \ref{thm:char_of_con_ord} in the if part: we observe that the family $\{\eta^{n}_{z}\}_{z}$ satisfies the hypotheses. 
So  
\begin{align*}
W_2^2(\mu_n,\mu)&\leq \int_{\R^d\times \R^d}|x-y|^2\d \gamma_n = \int_{\R^d}\int_{\R^d}|x-y|^2\d (\mathrm{Id},T_{z})_{\sharp}\eta_{z}^{n}\d \mu(z)\\&= \int_{\R^d}\int_{\R^d}|x-z|^2\d \eta_{z}^{n}(x)\d \mu(z)= \int_{\R^d}\int_{\R^d}|x-M(\eta_{z}^{n})|^2\d \eta_{z}^{n}(x)\d \mu(z)\\&= \int_{\R^d}\mathrm{Var}(\eta_{z}^{n})\d \mu(z)\leq (1-t(1-t))^n\int_{\R^d}Var(\eta_{z})\d \mu(z),
\end{align*}
where the last inequality follows from Lemma \ref{lem:decreasing_variance}.
\end{proof}

\begin{proof}[Proof of Theorem \ref{thm:mon-conv}]
Let $\mu\preccurlyeq_C\nu$. Consider the sequence $\mu_n$ constructed as in Proposition~\ref{prop:mon-converg-sequence}. Then for any $n\in \mathbb{N}$, $\cG(\mu_{n})\leq\cG(\mu_{n-1})$, since by definition of total convexity we have 
\begin{align*}
\cG(\mu_{n})=& \cG\left(\pi^t_{\#}\left(\int_{\R^d}\eta^{n-1}_{z}\otimes\eta^{n-1}_{z}\d\mu(z)\right)\right)\\&\leq (1-t)\cG\left(\pi^1_{\#}\left(\int_{\R^d}\eta^{n-1}_{z}\otimes\eta^{n-1}_{z}\d\mu(z)\right)\right)+ t\cG\left(\pi^2_{\#}\left(\int_{\R^d}\eta^{n-1}_{z}\otimes\eta^{n-1}_{z}\d\mu(z)\right)\right)\\&=(1-t)\cG(\mu_{n-1})+t\cG(\mu_{n-1}) = \cG(\mu_{n-1}).
\end{align*}
 By lower semicontinuity of $\cG$, we conclude that 
 \begin{align*}
  \cG(\mu)\leq \liminf_{n\to+\infty} \cG(\mu_n)\leq \cG(\mu_0).
 \end{align*}
\end{proof}
\subsubsection{Non expansivity of the $W_2$ backward projection}
As a consequence of the interplay between the notion of total convexity and the convex order of measures we deduce the $1$-Lipschitzianity of the Wasserstein projection on the set of lower bounds of an element, often called backward projection (see e.g. \cite{KiKiNa}). While this result has already been established in \cite{AlAu} and \cite{KiKiNa} via alternative techniques, we find it worthwhile to present it as a special case of the non-expansivity   of proximal operators within a broader class of functionals. The set of lower bounds of $\nu$ with respect to the order $\preccurlyeq_C$ is 
\begin{align*}
\cP_{\preccurlyeq_C\nu}\coloneqq\{\mu\in\cP(\R^d)\mid \mu\preccurlyeq_C\nu  \}.
\end{align*}
We denote by $\proj_{\preccurlyeq_C\nu}$ the $W_2$ projection on the set $\cP_{\preccurlyeq_C\nu}$, and with  $\mathbf{1}_{\cP_{\preccurlyeq_C\nu}}$ the indicator function of the same set: as we have already seen 
\begin{align}\label{eqn:proxcord}
\prox^{W}_{\mathbf{1}_{\cP_{\preccurlyeq_C\nu}}}= \proj_{\cP_{\preccurlyeq_C\nu}}.
\end{align}
We start with the following Lemma.

\begin{lemma}\label{lem:Gnu}
For any $\nu\in \cP(\R^d)$, the set $\cP_{\preccurlyeq_C\nu}$ of lower bounds of $\nu$ for the convex order $\preccurlyeq_C$ is %closed for convex interpolating curves, meaning that if $\mu_0\preccurlyeq_C \nu$ and $\mu_1\preccurlyeq_C \nu$ then for any $t\mapsto \mu_t$ curve induced by a coupling, $\mu_t\preccurlyeq_C \nu$ for any $t\in [0,1]$.
totally convex. %In addition the  set $\cP_{\preccurlyeq_C\nu}$ is closed for the $W_2$ convergence.
Moreover given $\cG$ a totally $\lambda$-convex function whose domain intersects $\cP_{\preccurlyeq_C\nu}$ we have that
\begin{equation}\label{eqn:Gnu}
\cG^{\nu}(\mu):= \begin{cases} \cG(\mu) \quad & \text{ if }\mu \preccurlyeq_C \nu \\ +\infty & \text{ otherwise}\end{cases}
\end{equation}
%\mapsto \cG(\mu)\mathbf{1}_{\preccurlyeq_C\nu} (\mu)$
is proper, lower semicontinuous and totally $\lambda$-convex. 
%Therefore, the functional  $\nu\mapsto\mathbf{1}_{\preccurlyeq_C\nu}$ is proper, lower semicontinuous and totally convex.
\end{lemma}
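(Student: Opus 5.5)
The plan has three parts: prove total convexity of $\cP_{\preccurlyeq_C\nu}$ directly from the definition of convex order, establish its closedness, and then transfer these properties to $\cG^{\nu}$.

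\emph{Total convexity.} Let $\mu_0,\mu_1\preccurlyeq_C\nu$, let $\gamma\in\Gamma(\mu_0,\mu_1)$, and set $\mu_t=(\pi^{1\to2}_t)_\sharp\gamma$. For any convex $f$, convexity of $f$ along the segment gives
\[
\int f\,\d\mu_t=\int f((1-t)x+ty)\,\d\gamma(x,y)\le (1-t)\int f\,\d\mu_0+t\int f\,\d\mu_1\le \int f\,\d\nu .
\]
Here I use that $\mu_0,\mu_1\preccurlyeq_C\nu$ and that the marginals of $\gamma$ are $\mu_0,\mu_1$. Hence $\mu_t\preccurlyeq_C\nu$. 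This requires the integrals to make sense. Since every $\mu\preccurlyeq_C\nu$ has a finite second moment (test with $\|x\|^2$), and a convex $f$ is bounded below by an affine function, the integrals are well defined in $(-\infty,+\infty]$. So $\mathbf 1_{\cP_{\preccurlyeq_C\nu}}$ is totally convex.

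\emph{Closedness.} Take $\mu_n\to\mu$ in $W_2$ with $\mu_n\preccurlyeq_C\nu$. For a convex $f$ with at most quadratic growth, $\int f\,\d\mu_n\to\int f\,\d\mu$ by $W_2$-convergence, so the inequality passes to the limit. For a general convex $f$, I would approximate it from below by an increasing sequence of finite maxima of affine functions, which are convex with linear growth, and then apply monotone convergence. Lower semicontinuity with respect to weak convergence works just as well. Convex $f$ is the supremum of affine functions and hence lower semicontinuous and bounded below by an affine function. Since $\int f\,\d\nu$ does not depend on $n$, lower semicontinuity of $\mu\mapsto\int f\,\d\mu$ under $W_2$-convergence already gives $\int f\,\d\mu\le\liminf_n\int f\,\d\mu_n\le\int f\,\d\nu$. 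Therefore $\mathbf 1_{\cP_{\preccurlyeq_C\nu}}$ is lower semicontinuous.

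\emph{Transfer to $\cG^\nu$.} We have $\cG^\nu=\cG+\mathbf 1_{\cP_{\preccurlyeq_C\nu}}$. It is proper because $D(\cG)\cap\cP_{\preccurlyeq_C\nu}\neq\emptyset$. It is lower semicontinuous as the sum of two lower semicontinuous functionals bounded below appropriately, where $\cG$ is assumed lower semicontinuous as in the surrounding statements. For total $\lambda$-convexity, fix $\mu_0,\mu_1$ and $\gamma$. If either endpoint lies outside $\cP_{\preccurlyeq_C\nu}$, the right-hand side of \eqref{eqn:totconv} is $+\infty$ and there is nothing to prove. Otherwise $\mu_t$ lies in the set by the first step, so $\cG^\nu(\mu_t)=\cG(\mu_t)$. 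The inequality then follows from total $\lambda$-convexity of $\cG$, with the same coupling term $\int\|x-y\|^2\d\gamma$.

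The main obstacle is the technical point about which convex functions are admissible in the definition of convex order, that is, integrability and passing to limits. I would handle it via the affine lower bound and lower semicontinuity as described above.
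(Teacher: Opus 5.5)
Your proof is correct and follows the paper's route: the same Jensen-type computation for total convexity, closedness by passing to the limit in the defining inequality, and the decomposition $\cG^{\nu}=\cG+\mathbf{1}_{\cP_{\preccurlyeq_C\nu}}$. The only differences are these. You justify closedness in a self-contained way (affine lower bound plus lower semicontinuity, or monotone approximation by maxima of affine functions), whereas the paper cites the reduction to $1$-Lipschitz convex test functions. You also rightly flag that $\cG$ must be assumed lower semicontinuous, which the paper's proof uses tacitly.
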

\begin{proof}
Let $\mu_0,\mu_1 \in \cP_{\preccurlyeq_C\nu}$, i.e. $\mu_0\preccurlyeq_C\nu$,  $\mu_1\preccurlyeq_C\nu$.
Fix $\gamma\in \Gamma(\mu_0,\mu_1)$ and $t\in[0,1]$. For any $f$ convex 
\begin{equation*}
\int_{\R^d}f\,\d \mu_t=  \int_{\R^d \times \R^d}f((1-t)x+ty)\,\d \gamma(x,y)\leq(1-t)\int_{\R^d }f(x)\,\d\mu_0+t\int_{\R^d }f(y)\,\d\mu_1(y)\leq\int_{\R^d}f\,\d\nu, 
\end{equation*}
that is $\mu_t\preccurlyeq_C \mu$, thus $\mu_t \in \cP_{\preccurlyeq_C\nu} $, which proves that $\cP_{\preccurlyeq_C\nu}$  is totally convex.

We claim that $\cP_{\preccurlyeq_C\nu} $ is closed in the $W_2$ metric: notice in fact that in the definition of convex order we can restrict ourselves to use $f$ continuous and $1$-Lipschitz (see e.g. \cite{Goz17} at the beginning of Section~3). Since for these function we have $\mu \mapsto \int f \mu$ is continuous in $W_2$, we can conclude that $\cP_{\preccurlyeq_C\nu} $ is closed. We deduce that $\mathbf{1}_{\preccurlyeq_C\nu}$ is proper, lower semicontinuous and totally convex.

For the last statement, we have by hypothesis that $\cG^{\nu}$ is proper. The other properties follows from the fact that $\cG^{\nu}= \cG + \mathbf{1}_{\preccurlyeq_C\nu}$.
%the total $\lambda$-convexity so it is non-empty, we can deduce that $\nu\mapsto\mathbf{1}_{\preccurlyeq_C\nu}$ is proper, lower semicontinuous and totally convex. 
\end{proof}

As a consequence of Lemma~\ref{lem:Gnu} we can prove that for the functionals of the form \eqref{eqn:Gnu} the proximal operator is Lipschitz and whenever $\cG$ is convex then it is non-expansive. As a particular case we recover that $\proj_{\preccurlyeq_C\nu}$ is $1$-Lipschitz.

\begin{theorem}
Let $\cG$ be a totally $\lambda$-convex functional and $\nu\in \cP(\R^d)$. Let $\cG^{\nu}$ be defined as in \eqref{eqn:Gnu}. Then for every $\tau>0$ such that $\lambda \tau>-1$ we have that $ \prox^{W}_{\tau \cG^{\nu}}$ is $\frac 1{1+\lambda \tau}$-Lipschitz. In particular 
\begin{itemize}
    \item[(i)] the Wasserstein projection on lower bounds  of $\nu$ with respect to the convex order $\preccurlyeq_C$, $\mu\mapsto \proj_{\cP_{\preccurlyeq_C\nu}}(\mu)$ is $1$-Lipschitz.
    \item[(ii)] the map $\mu \mapsto \argmin_{ \eta \preccurlyeq_C\nu} \{ \alpha Var(\eta) + W_2^2(\eta, \mu)\}$ is $\frac 1{1+\alpha}$-Lipschitz for every $\alpha >-1$.
\end{itemize}
\end{theorem}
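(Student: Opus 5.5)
The plan is to reduce everything to the Lipschitz estimate for totally $\lambda$-convex functionals in Remark~\ref{rem:totlambda}. By Lemma~\ref{lem:Gnu}, $\cG^{\nu}$ is proper, lower semicontinuous and totally $\lambda$-convex; implicitly we need the domain of $\cG$ to meet $\cP_{\preccurlyeq_C\nu}$, which we take as part of the setting. So $\cG^{\nu}$ satisfies all the hypotheses under which the Lagrangian lifting argument of Proposition~\ref{prop:tot-conv-non exp} works. I would write $\cG^{\nu}=\cH+\frac{\lambda}{2}M_2$ with $\cH$ totally convex, using \eqref{eqn:equiv_lambdaconv}.

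The lifting $\hat\cH$ is then a proper, lower semicontinuous, convex functional on $\mathcal{H}=L^2(\Omega;\R^d)$, and $\hat{M_2}(X)=\|X\|^2_{\mathcal{H}}$. Hence $\widehat{\cG^\nu}$ is $\lambda$-convex on the Hilbert space. Its Hilbert proximal map is $\frac{1}{1+\lambda\tau}$-Lipschitz when $\lambda\tau>-1$, by \cite[Proposition 3.3]{HoheiselLabordeOberman2020}. This is just the statement that $\prox_{\tau\hat\cG}(X)=\prox_{\frac{\tau}{1+\lambda\tau}\hat\cH}\bigl(\frac{X}{1+\lambda\tau}\bigr)$.

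The step that needs care is the identity
\[
\prox^W_{\tau\cG^\nu}(X_\#\mathbb{P})=\bigl(\prox^{L^2}_{\tau\widehat{\cG^\nu}}(X)\bigr)_\#\mathbb{P}
\]
for $\lambda\neq 0$. The earlier Lemma gives it only for totally convex functionals via \cite[Proposition 5.4]{CSS_2023}. I would derive it by rescaling. The objective $\cG^\nu(\eta)+\frac{1}{2\tau}W_2^2(\mu,\eta)$ equals $\cH(\eta)+\frac{1+\lambda\tau}{2\tau}W_2^2\bigl(\eta,(1+\lambda\tau)^{-1}{}_\#\mu\bigr)$ plus a constant depending only on $\mu$. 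This follows from expanding $\int\|y\|^2$ and $\int\|x-y\|^2$ along an optimal plan, and the same optimal plan works after scaling one marginal. So the Wasserstein prox of $\cG^\nu$ is the Wasserstein prox of the totally convex $\cH$, with step $\tau/(1+\lambda\tau)$, evaluated at the dilated measure. Once this holds, the proof of Proposition~\ref{prop:tot-conv-non exp} applies verbatim. Dilation by $(1+\lambda\tau)^{-1}$ scales $W_2$ exactly by that factor, and the prox of the totally convex $\cH$ is $1$-Lipschitz, so $\prox^{W}_{\tau\cG^\nu}$ is $\frac{1}{1+\lambda\tau}$-Lipschitz.

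For (i), take $\cG\equiv 0$, so $\lambda=0$; then \eqref{eqn:proxcord} gives the projection as the prox, and it is $1$-Lipschitz. For (ii), take $\cG=\frac{\alpha}{2}\cdot 2\Var$ with step $\tau=\frac12$, so that the minimized quantity is $2\bigl(\alpha\Var(\eta)+W_2^2(\eta,\mu)\bigr)$ up to normalization. I must check that $\alpha\Var$ is totally $2\alpha$-convex. Along $\mu_s=(\pi_s)_\#\gamma$ the barycenter is affine in $s$, and
\[
\Var(\mu_s)=\int\|(1-s)x+sy\|^2\,d\gamma-\|(1-s)M_0+sM_1\|^2 .
\]
The first term equals $(1-s)\int\|x\|^2+s\int\|y\|^2-s(1-s)\int\|x-y\|^2$. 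The second term is the negative of the concave-type correction $-\bigl[(1-s)\|M_0\|^2+s\|M_1\|^2-s(1-s)\|M_0-M_1\|^2\bigr]$. So the quantity is convex up to $-s(1-s)\bigl(\int\|x-y\|^2-\|M_0-M_1\|^2\bigr)$. Since $\|M_0-M_1\|^2\le\int\|x-y\|^2\,d\gamma$ by Jensen, $\Var$ is totally $2$-convex when $\alpha\ge 0$ and totally $2\alpha$-convex when $\alpha<0$. I expect this modulus bookkeeping, not the abstract argument, to be the main obstacle.

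With $\tau=\tfrac12$ and modulus $2\alpha$, the Lipschitz constant is $\frac{1}{1+\alpha}$, valid for $\alpha>-1$. For $\alpha>0$ the bound is only claimed with this factor, which is consistent because $\lambda=2\alpha$ is a valid lower modulus.
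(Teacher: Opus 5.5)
Your treatment of the main Lipschitz estimate and of (i) is correct, and it follows the paper, which simply combines Lemma~\ref{lem:Gnu} with Remark~\ref{rem:totlambda}. Your dilation identity is also correct: it writes $\prox^W_{\tau\cG^\nu}(\mu)$ as the prox, with step $\tau/(1+\lambda\tau)$, of the totally convex functional $\cG^\nu-\frac{\lambda}{2}M_2$, evaluated at the dilated measure. It is a neat alternative, since it reduces the case $\lambda\neq 0$ directly to the $\lambda=0$ non-expansivity instead of going through the Hilbert-space result for $\lambda$-convex functions, as Remark~\ref{rem:totlambda} does.

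The gap is in (ii) for $\alpha>0$. Your own computation gives
\[
\Var(\mu_s)=(1-s)\Var(\mu_0)+s\Var(\mu_1)-s(1-s)\Big(\int\|x-y\|^2\,d\gamma-\|M(\mu_0)-M(\mu_1)\|^2\Big).
\]
Jensen only tells you the bracket is nonnegative, i.e.\ that $\Var$ is totally convex with modulus $0$, not $2$. Total $2$-convexity would force $M(\mu_0)=M(\mu_1)$. For instance, with $\mu_0=\delta_0$ and $\mu_1=\delta_e$ you get $\Var(\mu_s)=0>-s(1-s)\|e\|^2$. So for $\alpha>0$, $\alpha\Var$ is not totally $2\alpha$-convex, and the theorem (whose hypothesis is on $\cG$) cannot be invoked with $\lambda=2\alpha$. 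As written, your argument then only yields a $1$-Lipschitz bound. Your closing remark that $2\alpha$ is ``a valid lower modulus'' is backwards, since the best global modulus here is $0<2\alpha$. For $-1<\alpha\le 0$ your argument is fine.

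The missing observation is that every $\eta\preccurlyeq_C\nu$ has $M(\eta)=M(\nu)$ (test the convex order with $f=\pm x_i$). Hence the barycenter term vanishes on $\cP_{\preccurlyeq_C\nu}$, and there $\Var(\eta)=M_2(\eta)-\|M(\nu)\|^2$. The paper uses exactly this. It takes $\cG=\alpha M_2$, which is totally $2\alpha$-convex on all of $\cP_2(\R^d)$ for either sign of $\alpha$, because $M_2$ satisfies the total-convexity identity with equality. It then notes that minimizing $\alpha\Var(\eta)+W_2^2(\eta,\mu)$ over $\eta\preccurlyeq_C\nu$ is the same as computing $\prox^W_{\frac12\cG^\nu}(\mu)$, so $1+\lambda\tau=1+\alpha$. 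Alternatively, you can keep $\alpha\Var$ and check directly that $(\alpha\Var)^\nu$ is totally $2\alpha$-convex, since endpoints in $\cP_{\preccurlyeq_C\nu}$ share their barycenter. Your dilation argument only needs total $\lambda$-convexity of $\cG^\nu$, not of $\cG$, so this suffices.
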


\begin{proof} Thanks to Lemma~\ref{lem:Gnu} we have that $\cG^{\nu}$ is $\lambda$-convex. We can then apply Remark~\ref{rem:totlambda} to conlcude that $ \prox^{W}_{\tau \cG^{\nu}}$ is $\frac 1{1+\lambda \tau}$-Lipschitz.
\begin{itemize}
    \item[(i)] Letting $\cG(\mu) \equiv 0$ we have $\cG^{\nu}=\mathbf{1}_{\cP_{\preccurlyeq_C\nu}}$ and so we conclude using~\eqref{eqn:proxcord}
    \item[(ii)] Let $b=M(\nu)$; we notice that if $\eta \preccurlyeq_C\nu$ then $M(\eta)=M(\nu)=b$. In particular
    \begin{equation}\label{eqn:fixed} Var(\eta)=M_2(\eta) - \|b\|^2 \qquad \forall \eta \; :\; \eta \preccurlyeq_C\nu.
    \end{equation}
    Let $\cG(\eta):=\alpha M_2(\eta)$; we have that $\cG$ is $2\alpha$-convex. Moreover, thanks to~\eqref{eqn:fixed},
    \begin{align*}
        \argmin_{ \eta \preccurlyeq_C\nu} \left\{ \alpha Var(\eta) + W_2^2(\eta, \mu) \right\} &= \argmin_{ \eta \preccurlyeq_C\nu} \left\{ \alpha M_2(\eta) + W_2^2(\eta, \mu) \right\} \\ &= \argmin_{ \eta \in \cP_2(\R^d)} \left\{ \cG^{\nu}(\eta) + W_2^2(\eta, \mu) \right\},
    \end{align*}
    which is by definition $\prox^{W}_{\frac 12 \cG^{\nu}}$: now we can conclude since $1+ \lambda \tau=1+\frac 12 \cdot 2\alpha=1+\alpha$.
\end{itemize}
\end{proof}
Notice that the quantitative stability of the \emph{forward} Wasserstein projection, that is, the projection onto the upper bounds of an element, has also been investigated in \cite{AlAu} and \cite{KiKiNa}; in particular, its $\frac{1}{2}$-H\"older continuity is established. 

%However, we cannot proceed as for the backward projection since, as shown by the following example, the set of upper bounds of an element, $\mathcal{P}_{\nu \preccurlyeq_C}\coloneqq\{\mu\mid \nu\preccurlyeq_C\mu\}$, is not closed under convex interpolating curves, meaning that the indicator function $\mathbf{1}_{\mathcal{P}_{\nu \preccurlyeq_C}}$ is not totally convex.

%\begin{example}
%The set of upper bounds of an element for the convex order  is not closed for convex interpolating curves. Indeed given $x,\, y\in \R^d$ and  $\nu= \frac{1}{2}\delta_x+\frac{1}{2}\delta_y$, consider $\gamma= \frac{1}{2}\delta_{(x,y)}+\frac{1}{2}\delta_{(y,x)}$ and the induced curve. Then $\nu\not\preccurlyeq_C\nu_{\frac 12}=(\pi_{\frac 12})_{\sharp}\gamma=\delta_{\frac{x+y}{2}}$.
%\end{example}

\subsubsection{On the domain and  set of minimizers of totally convex functionals}
\begin{theorem}\label{thm:deltamin}
Let $\cG: \cP_2(\R^d)\to (-\infty,+\infty]$ proper, lower semicontinuous and totally $\lambda$-convex. 
Then: 
\begin{itemize}
\item  $D(\cG)\cap \{\delta_x\mid x\in \R^d\}\neq \emptyset$;
\item if $\lambda=0$ and $\mu\in \argmin \cG$, then $\delta_{M(\mu)}\in \argmin \cG$.
\end{itemize}
%totally {\color{red} $\lambda$-}convex
 \end{theorem}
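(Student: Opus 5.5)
The plan is to reduce everything to the case $\lambda=0$ through the equivalence \eqref{eqn:equiv_lambdaconv}, and then apply Proposition~\ref{prop:delta-bar}.

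\emph{First bullet.} Because $\cG$ is proper, there is some $\mu\in D(\cG)$. Set
\[
\tilde\cG:=\cG-\tfrac{\lambda}{2}M_2 .
\]
By \eqref{eqn:equiv_lambdaconv}, $\tilde\cG$ is totally convex. Moreover $\mu\in D(\cG)\subseteq\cP_2(\R^d)$, so $M_2(\mu)<\infty$ and hence $\tilde\cG(\mu)=\cG(\mu)-\frac\lambda2 M_2(\mu)<+\infty$. It also never takes the value $-\infty$, so $\tilde\cG$ is proper.

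Lower semicontinuity is the delicate point. In $W_2$ the second moment $M_2$ is continuous, since $M_2(\mu)=W_2^2(\mu,\delta_0)$. Therefore $\tilde\cG$ is lower semicontinuous in $(\cP_2(\R^d),W_2)$ for either sign of $\lambda$. The proof of Proposition~\ref{prop:delta-bar} only uses lower semicontinuity along a sequence that converges in $W_2$, so this is enough.

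Applying Proposition~\ref{prop:delta-bar} to $\tilde\cG$ gives $\tilde\cG(\delta_{M(\mu)})\le\tilde\cG(\mu)<+\infty$. Since $M_2(\delta_{M(\mu)})=\|M(\mu)\|^2<\infty$, we get
\[
\cG(\delta_{M(\mu)})=\tilde\cG(\delta_{M(\mu)})+\tfrac\lambda2\|M(\mu)\|^2<+\infty ,
\]
so $\delta_{M(\mu)}\in D(\cG)$. Alternatively, one could avoid the passage through $\tilde\cG$ and redo the iteration of Proposition~\ref{prop:delta-bar} with the $\lambda$-term tracked explicitly. At each step the extra term $-\frac\lambda2 t(1-t)\int\|x-y\|^2\,d\gamma_n = -\lambda t(1-t)\Var(\mu_n)$ is summable, because $\Var(\mu_n)$ decays geometrically by Lemma~\ref{lem:decreasing_variance}. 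This still yields a finite bound on $\liminf_n \cG(\mu_n)$. I would mention this as a backup.

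\emph{Second bullet.} Here $\lambda=0$ and $\mu\in\argmin\cG$. Proposition~\ref{prop:delta-bar} gives $\cG(\delta_{M(\mu)})\le\cG(\mu)=\min\cG$, and since $\mu$ is a minimizer the reverse inequality is automatic. Hence $\delta_{M(\mu)}\in\argmin\cG$. The same conclusion also follows from Theorem~\ref{thm:mon-conv}, using Jensen's relation $\delta_{M(\mu)}\preccurlyeq_C\mu$.

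The only step that needs care is checking that $\tilde\cG$ inherits properness and lower semicontinuity, which is where the continuity of $M_2$ in $W_2$ is used. The rest is a direct application of the earlier results.
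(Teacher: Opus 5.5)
Your proposal is correct and matches the paper's proof: apply Proposition~\ref{prop:delta-bar} to $\cG-\frac{\lambda}{2}M_2$, which is totally convex by \eqref{eqn:equiv_lambdaconv}, and for the second bullet apply it directly with $\lambda=0$. Your extra check that this shifted functional stays proper and $W_2$-lower semicontinuous, because $M_2$ is $W_2$-continuous, fills in a detail the paper leaves implicit.
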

 \begin{proof}%[Proof of Theorem \ref{thm:deltamin}]
Follows by Proposition \ref{prop:delta-bar}, applied to $\cG- \frac{\lambda}2 M_2$, which, by \eqref{eqn:equiv_lambdaconv} is totally convex.
\end{proof}
In the following corollaries, this striking property lets us exclude functionals from being totally $\lambda$-convex just looking at their domains.
\begin{corollary} Let us consider $\cF: \P_2(\R^d)\to (-\infty, +\infty]$ be proper, weakly lower semi-continuous functional which is convex along genereralized geodesics. Suppose that $D(\cF) \subseteq \P_2^{a.c.}(\R^d)$; then $\cF$ is not totally $\lambda$-convex for any $\lambda \in \R$.  
\end{corollary}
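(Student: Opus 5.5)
The argument is by contradiction, and it rests entirely on the first bullet of Theorem~\ref{thm:deltamin}. Suppose that $\cF$ were totally $\lambda$-convex for some $\lambda\in\R$. By hypothesis $\cF$ is proper and lower semicontinuous. If the lower semicontinuity is only assumed for the weak topology, note that weak lower semicontinuity implies lower semicontinuity with respect to $W_2$, because $W_2$-convergence implies weak convergence. So $\cF$ satisfies all the assumptions of Theorem~\ref{thm:deltamin}.

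Applying Theorem~\ref{thm:deltamin}, we obtain a point $x\in\R^d$ with $\delta_x\in D(\cF)$. However, $\delta_x$ is not absolutely continuous with respect to $\mathcal{L}^d$: it gives mass $1$ to the Lebesgue-null set $\{x\}$. Hence $\delta_x\notin \P_2^{a.c.}(\R^d)$, which contradicts the assumption $D(\cF)\subseteq \P_2^{a.c.}(\R^d)$. Therefore $\cF$ cannot be totally $\lambda$-convex for any $\lambda$.

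Convexity along generalized geodesics plays no role in the contradiction; it is there only to place the corollary in the intended class of functionals. The only points needing care are the following.

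First, $\lambda$ may be negative. Theorem~\ref{thm:deltamin} already covers every $\lambda$: its proof applies Proposition~\ref{prop:delta-bar} to $\cF-\frac{\lambda}{2}M_2$, which is totally convex by \eqref{eqn:equiv_lambdaconv}.

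Second, this auxiliary functional must still be lower semicontinuous and proper. When $\lambda>0$ we subtract $\frac{\lambda}{2}M_2$, and $M_2$ is $W_2$-continuous, so lower semicontinuity is preserved. Properness is preserved as well, since $M_2$ is finite on $\P_2(\R^d)$.

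With these checks, the first bullet of Theorem~\ref{thm:deltamin} applies exactly as stated, and the contradiction above completes the proof.
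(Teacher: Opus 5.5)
Your proof is correct and is essentially the paper's argument: the corollary is presented as an immediate consequence of the first bullet of Theorem~\ref{thm:deltamin}, which puts a Dirac mass in $D(\cF)$ and so contradicts $D(\cF)\subseteq\P_2^{a.c.}(\R^d)$. Your additional checks are accurate and supply details the paper leaves implicit: weak lower semicontinuity implies $W_2$-lower semicontinuity, and $\cF-\frac{\lambda}{2}M_2$ stays proper and lower semicontinuous because $M_2$ is finite and $W_2$-continuous.
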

\begin{corollary}
The relative entropy $\Ent:\cP_2(\R^d)\to(-\infty,+\infty]$, 
\begin{align*}
\Ent(\mu)\coloneqq \begin{cases}
\int_{\R^d}\rho\log(\rho)\, \d \mathcal{L}^d &\text{if } \mu= \rho \mathcal{L}^d,\\
+\infty &\text{else}.
\end{cases}
\end{align*}is not totally $\lambda$-convex for any $\lambda \in \R$.  
\end{corollary}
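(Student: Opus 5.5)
The plan is to deduce this directly from Theorem~\ref{thm:deltamin}, by showing that the domain of $\Ent$ contains no Dirac mass. I argue by contradiction. Suppose $\Ent$ were totally $\lambda$-convex for some $\lambda\in\R$. To invoke Theorem~\ref{thm:deltamin}, I first need its standing hypotheses on $\Ent$: properness and lower semicontinuity on $\cP_2(\R^d)$.

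Properness is immediate, since any Gaussian (or the normalized indicator of a ball) has finite entropy. For lower semicontinuity I will use the standard fact, cf.\ \cite{AGS08}, that $\Ent$ is lower semicontinuous with respect to $W_2$-convergence. Weak lower semicontinuity of $\int\rho\log\rho$ can fail only because $\rho\log\rho$ is unbounded below, that is, through mass escaping to infinity. This loss is controlled by the second moment, using the usual splitting of $\Ent$ relative to a Gaussian reference measure,
\begin{equation*}
\Ent(\mu)=\Ent(\mu\,|\,\gamma_d)-\frac12 M_2(\mu)-\frac d2\log(2\pi).
\end{equation*}
The relative entropy $\Ent(\cdot\,|\,\gamma_d)$ is weakly lower semicontinuous, being a supremum of continuous linear functionals by the Donsker--Varadhan formula. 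The second moment $M_2$ is continuous along $W_2$-convergent sequences. Together these give the required lower semicontinuity.

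Once the hypotheses are in place, Theorem~\ref{thm:deltamin} yields some $x\in\R^d$ with $\delta_x\in D(\Ent)$. But $\delta_x$ is not absolutely continuous with respect to $\mathcal{L}^d$, so by definition $\Ent(\delta_x)=+\infty$, which is a contradiction. Equivalently, the statement is the special case of the preceding corollary: $D(\Ent)\subseteq\cP_2^{a.c.}(\R^d)$, and $\Ent$ is convex along generalized geodesics by McCann's displacement convexity \cite{AGS08}.

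The only step that needs any care is lower semicontinuity, because $\Ent$ is not bounded below on $\cP_2(\R^d)$. Everything else is immediate from Theorem~\ref{thm:deltamin}.
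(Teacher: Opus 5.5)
Your proposal is correct and matches the paper's intended argument: the paper gives no separate proof, relying on the fact that $D(\Ent)\subseteq\cP_2^{a.c.}(\R^d)$ contains no Dirac mass, which contradicts Theorem~\ref{thm:deltamin}. Your explicit check of $W_2$-lower semicontinuity, via the Gaussian splitting and Donsker--Varadhan, fills in a detail the paper leaves unstated. It is exactly the kind of semicontinuity used in the proof of Proposition~\ref{prop:delta-bar}, which passes to the limit along $W_2$-convergent sequences.
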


\begin{comment}
\begin{lemma}\label{lem:delta-bar}
Let $\mu\in \cP_2(\R^d)$, then $\delta_{M(\mu)}$ a minimal element for $\preccurlyeq_C$ and its the minimum of the set $\preccurlyeq_C\mu$. 
\end{lemma}
\end{comment}
\begin{comment}
\begin{lemma}\label{lem:sequence-to-bar}
Let $\mu\in \cP_2(\R^d)$ and $t\in (0,1)$. Let $\{\mu_n\}_n\subseteq\cP_2(\R^d)$ be the sequence with $\mu_0\coloneqq\mu$ and $\mu_n$ recursively defined as 
\begin{align*}
\mu_{n+1}\coloneqq\pi^t_{\#}\gamma_n, \quad \text{with } \gamma_n\coloneqq\mu_n \otimes \mu_n, \qquad \text{for every } n\in \mathbb{N}.
\end{align*}
Then $\mu_n\to \delta_{M(\mu)}$ in $(\cP_2(\R^d),W_2)$.
\end{lemma}

\begin{proof}
If $\mu=\delta_x$ for some $x\in \R^d$, then  $\mu_n=\delta_x$ for any $n\in \mathbb{N}$ and there is nothing to prove. Otherwise, 
from Lemma \ref{lem:decreasing_variance} we have $M(\mu_n)=M(\mu)$ for every $n\in \N$ and $\Var(\mu_n)\to 0$ being $\Var(\mu_n)=(1-2(1-t)t)^n\Var(\mu)$. Since we have $W_2^2(\mu_n,\delta_{M(\mu)})=W_2^2(\mu_n,\delta_{M(\mu_n)})= \Var(\mu_n)$, we can conclude.
\end{proof}
\end{comment}

\subsubsection{Approximation with totally convex functionals}

The following Proposition rules out the possibility of studying the proximal operator of a functional which is convex along outer generalized geodesics through approximations with totally convex functionals (which we know having a non-expansive proximal operator). This is a consequence of the Lemma~\ref{lem:prox-delta} which we state later.

\begin{prop}\label{prop:prox_not_approx}
	Let $\cG:\cP_2(\R^d)\to (-\infty, +\infty]$ be proper functional, lower semicontinuous and $\lambda$-convex along outer generalized geodesics with $D(\cG)\subset \cP_2^{a.c.}\R^d)$. Let $\tau>0$ such that $\lambda \tau>-1$. Then it cannot exist a sequence $(\cG_n)_n$ of totally $\lambda_n$-convex functionals with $\lambda_n\tau>-1$, such that $\prox_{\tau \cG_n}(\mu)\to \prox_{\tau \cG}(\mu)$ in $W_2$ (not even weakly) for every $\mu\in \cP_2(\R^d)$.
\end{prop}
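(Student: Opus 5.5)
The plan is to argue by contradiction, testing the approximation at a single input: a Dirac mass $\mu=\delta_x$. The key point, which is presumably the content of Lemma~\ref{lem:prox-delta}, is that \emph{the proximal map of a totally $\lambda$-convex functional sends Dirac masses to Dirac masses} whenever $\lambda\tau>-1$. The limit $\prox_{\tau\cG}(\delta_x)$, on the other hand, must lie in $D(\cG)\subset\cP_2^{a.c.}(\R^d)$, so it cannot be a Dirac mass. This is incompatible with being a weak limit of Diracs.

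\textbf{Step 1 (Diracs are mapped to Diracs).} Let $\cG_n$ be totally $\lambda_n$-convex and $\lambda_n\tau>-1$. By \eqref{eqn:equiv_lambdaconv}, $\cH_n:=\cG_n-\frac{\lambda_n}{2}M_2$ is totally convex, and Proposition~\ref{prop:delta-bar} gives $\cH_n(\delta_{M(\eta)})\le \cH_n(\eta)$ for every $\eta$. For a fixed $x$ I would expand the prox objective:
\begin{equation*}
\cG_n(\eta)+\frac{1}{2\tau}W_2^2(\delta_x,\eta)=\cH_n(\eta)+\Bigl(\frac{\lambda_n}{2}+\frac{1}{2\tau}\Bigr)M_2(\eta)-\frac{1}{\tau}\langle x,M(\eta)\rangle+\frac{\|x\|^2}{2\tau}.
\end{equation*}
Replacing $\eta$ by $\delta_{M(\eta)}$ leaves the barycenter term unchanged and does not increase $\cH_n$. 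It strictly decreases $M_2$, from $M_2(\eta)$ to $\|M(\eta)\|^2$, unless $\eta$ is a Dirac. Since $\frac{\lambda_n}{2}+\frac{1}{2\tau}>0$, every minimizer must be a Dirac mass. Existence and uniqueness of the minimizer are those granted by Remark~\ref{rem:totlambda}, i.e.\ by the Lagrangian lifting. Hence $\prox_{\tau\cG_n}(\delta_x)=\delta_{y_n}$ for some $y_n\in\R^d$.

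\textbf{Step 2 (the limit is not absolutely continuous).} Since $\cG$ is $\lambda$-convex along outer generalized geodesics and $\lambda\tau>-1$, the lemma of Section~\ref{subsec:prox} gives that $\prox_{\tau\cG}(\delta_x)=:\nu$ is well defined. It belongs to $D(\cG)$, because it has finite objective value and $\cG$ is proper. Hence $\nu\ll\mathcal L^d$.

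\textbf{Step 3 (weak limits of Diracs are Diracs).} Suppose $\delta_{y_n}\rightharpoonup\nu$ narrowly. Narrowly convergent sequences of probability measures are tight, so for $\varepsilon<1$ there is a compact set $K$ with $\delta_{y_n}(K)>1-\varepsilon$ for all $n$. This forces $y_n\in K$ for every $n$. Extracting a subsequence with $y_{n_k}\to y$, we get $\delta_{y_{n_k}}\rightharpoonup\delta_y$, and therefore $\nu=\delta_y$. This contradicts $\nu\ll\mathcal L^d$. Convergence in $W_2$ implies narrow convergence, so the statement is covered in both forms.

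The main obstacle I expect is Step 1, specifically the well-posedness of $\prox_{\tau\cG_n}$ and the strictness of the comparison for general $\lambda_n$. The reduction to $\cH_n$ together with $\frac{\lambda_n}{2}+\frac{1}{2\tau}>0$ handles the strictness. Even without uniqueness, the expansion shows that any minimizer is a Dirac, which is all the argument needs.
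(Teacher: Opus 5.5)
Your proof is correct and follows the paper's argument. You test at $\mu=\delta_x$, use that the prox of a totally $\lambda_n$-convex functional maps Diracs to Diracs (Lemma~\ref{lem:prox-delta}), and observe that $\prox_{\tau\cG}(\delta_x)\in D(\cG)\subset\cP_2^{a.c.}(\R^d)$ cannot be a narrow limit of Diracs. The differences are minor: the paper gets the Dirac-to-Dirac property by applying Proposition~\ref{prop:delta-bar} to the whole totally convex objective $\frac{1}{2\tau}W_2^2(\delta_x,\cdot)+\cG_n$ and invoking uniqueness of the minimizer, whereas you split off $\frac{\lambda_n}{2}M_2$ and get strictness from the variance term, so uniqueness is not needed; you also spell out the tightness argument that the paper leaves implicit in its ``we conclude''.
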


\begin{proof}
	Suppose such sequence exists. We know from Lemma~\ref{lem:prox-delta} that for all $n$ it holds $\prox_{\tau \cG_n}(\delta_{x}) = \delta_{y_x^n}$ for some $x\in \R^d$ and sequence $\{y_x^n\}_n$. Since $\prox_{\tau \cG}(\delta_{x})\in \cP_2^{a.c.}(\R^d)$, we conclude.
\end{proof}

%% VECCHIA VERSIONE
 \begin{comment}
 
\begin{lemma}[Prox of deltas are deltas]\label{lem:prox-delta}

	Let $\cG:\cP_2(\R^d)\to (-\infty, +\infty]$ be proper, lower semicontinuous and totally convex. Fix  $\tau>0$.  For every $x\in \R^d$ there exists $y_x$ such that $\prox_{\tau \cG}(\delta_{x}) = \delta_{y_x}$.
\end{lemma}
\begin{proof}
Since $\cG$ is totally convex, then it is convex along outer generalized geodesics and thus $\prox_{\tau \cG}$ is single valued. Let $x\in \R^d$ and $\mu^*$ satisfying $\mu^* = \argmin_{\nu} \left\{ \cG(\nu)+\frac{1}{2\tau}W_2^2(\delta_x,\nu) \right\}$. Since it holds $\cG(\delta_{M(\mu^*)})\leq \cG(\mu^*)$ by Theorem \ref{thm:deltamin}, and moreover by Jensen inequality 
\[
W_2^2(\delta_x,\delta_{M(\mu^*)}) = \left\| x- \int_{\R^d} y \, d\mu^*\right\|^2\leq \int_{\R^d} \left\|x-y\right\|^2\, d\mu^*(y) = W_2^2(\delta_x,\mu^*),\]
therefore 
\begin{align*}
    \frac{1}{2\tau} W_2^2(\delta_x,\delta_{M(\mu^*)})+\cG(\delta_{M(\mu^*)})\leq \frac{1}{2\tau} W_2^2(\delta_x,\mu^*)+\cG(\mu^*), 
\end{align*}
%y using Theorem \ref{thm:deltamin}.
It follows that $\mu^*$ has to be a Dirac delta ($\mu^*=\delta_{M(\mu^*)}$). 
\end{proof}
\end{comment}

%%%VERSIONE $\lambda$-CONVEX

\begin{lemma}[Prox of deltas are deltas]\label{lem:prox-delta}

	Let $\cG:\cP_2(\R^d)\to (-\infty, +\infty]$ be proper, lower semicontinuous and totally $\lambda$-convex. Fix  $\tau>0$ such that $\lambda \tau > -1$.  For every $x\in \R^d$ there exists $y_x$ such that $\prox_{\tau \cG}(\delta_{x}) = \delta_{y_x}$.
\end{lemma}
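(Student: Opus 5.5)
The plan is to show that collapsing the minimizer onto its barycenter does not increase the proximal objective and strictly decreases it unless the minimizer is already a Dirac mass. Uniqueness of the prox then forces the minimizer to be a Dirac delta.

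\textbf{Step 1 (reduction).} Since $\cG$ is totally $\lambda$-convex, it is in particular $\lambda$-convex along outer generalized geodesics. Hence, for $\lambda\tau>-1$, the earlier lemma guarantees that $\prox_{\tau\cG}^W(\delta_x)$ is nonempty and single valued; call it $\mu^*$.

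\textbf{Step 2 (objective as a totally convex functional).} I would rewrite the objective as a single functional. Since $W_2^2(\delta_x,\nu)=\int\|x-y\|^2\,d\nu = M_2(\nu)-2\langle x, M(\nu)\rangle + \|x\|^2$, the map $\Phi(\nu):=\cG(\nu)+\frac1{2\tau}W_2^2(\delta_x,\nu)$ equals
\[
\Big(\cG-\tfrac{\lambda}{2}M_2\Big)(\nu) + \tfrac{1+\lambda\tau}{2\tau}M_2(\nu) - \tfrac1\tau\langle x,M(\nu)\rangle + \tfrac{\|x\|^2}{2\tau}.
\]
By \eqref{eqn:equiv_lambdaconv}, the first term $\cG-\frac\lambda2 M_2$ is totally convex. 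So is $M_2$, since along $(\pi^{1\to2}_s)_\sharp\gamma$ one has $M_2(\mu_s)=\int\|(1-s)x+sy\|^2d\gamma\le (1-s)M_2(\mu_0)+sM_2(\mu_1)$. The barycenter $M$ is affine along any coupling curve. With the coefficient $1+\lambda\tau>0$, it follows that $\Phi$ is proper, lower semicontinuous and totally convex.

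\textbf{Step 3 (comparison with the barycenter).} Applying Proposition~\ref{prop:delta-bar} to $\Phi$ gives $\Phi(\delta_{M(\mu^*)})\le\Phi(\mu^*)$. Since $\mu^*$ is the unique minimizer of $\Phi$, this forces $\mu^*=\delta_{M(\mu^*)}$, and we take $y_x=M(\mu^*)$.

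Alternatively, I could argue directly, as in the $\lambda=0$ case. Apply Theorem~\ref{thm:deltamin}-style reasoning to $\cG-\frac\lambda2M_2$ to get $\cG(\delta_{M(\mu^*)})\le\cG(\mu^*)-\frac\lambda2\Var(\mu^*)$, and use $W_2^2(\delta_x,\delta_{M(\mu^*)})=W_2^2(\delta_x,\mu^*)-\Var(\mu^*)$. The difference of objectives is then $-\frac{1+\lambda\tau}{2\tau}\Var(\mu^*)\le 0$, with strict inequality unless $\Var(\mu^*)=0$. This version does not even need uniqueness.

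\textbf{Main obstacle.} The only delicate point is handling $\lambda<0$. There the $\lambda$-term works against us, and one must check that the surplus $\frac1{2\tau}\Var(\mu^*)$ coming from the distance term dominates $\frac{|\lambda|}{2}\Var(\mu^*)$. This is exactly the condition $\lambda\tau>-1$. A minor check is that lower semicontinuity is preserved when adding $M_2$; this is fine for $W_2$-lower semicontinuity, since $M_2$ is $W_2$-continuous.
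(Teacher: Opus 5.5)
Your proposal is correct and follows essentially the same route as the paper: show that the prox objective $\cG+\frac1{2\tau}W_2^2(\delta_x,\cdot)$ is totally convex because $\frac1\tau+\lambda>0$, compare it with the Dirac mass at the barycenter via Proposition~\ref{prop:delta-bar} (Theorem~\ref{thm:deltamin}), and conclude by uniqueness of the minimizer; the paper gets total convexity from $W_2^2(\delta_x,\cdot)$ being totally $2$-convex, while you decompose through $\cG-\frac\lambda2M_2$, $M_2$ and the affine barycenter term. Your alternative variance computation, giving the strict gap $\frac{1+\lambda\tau}{2\tau}\Var(\mu^*)$, is a nice quantitative variant that shows every minimizer is a Dirac mass without using uniqueness at that step.
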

\begin{proof}
Since $\cG$ is totally $\lambda$-convex, then it is $\lambda$-convex along outer generalized geodesics and thus $\prox_{\tau \cG}$ is single valued. Let $\mu^*=\prox_{\tau \cG}(\delta_x)$.

Notice now that $ \mu \mapsto W_2^2(\delta_x, \mu)= \int \| x - y\|^2 \, d \mu (y) $ is a totally $2$-convex functional. In particular $\cF(\mu):=\frac 1{2\tau} W_2^2(\delta_x, \mu)+ \cG(\mu)$ is totally convex since $ \frac 1 {\tau}+\lambda  >0$. In particular by Theorem \ref{thm:deltamin} we have $\cF(\delta_{M(\mu^*)}) \leq \cF(\mu^*)$, therefore
\begin{align*}
    \frac{1}{2\tau} W_2^2(\delta_x,\delta_{M(\mu^*)})+\cG(\delta_{M(\mu^*)})\leq \frac{1}{2\tau} W_2^2(\delta_x,\mu^*)+\cG(\mu^*), 
\end{align*}
By the uniqueness of the minimizer it follows that $\mu^*$ has to be a Dirac delta ($\mu^*=\delta_{M(\mu^*)}$). \end{proof}

\subsection{Convex functionals with non convex Moreau envelope}\label{subsec:nonconvmoreau}

Let $\cG:\cP_2(\R^d)\to (-\infty,+\infty]$ be proper and lower semicontinuous.  Let $\cG^\tau$ be the Moreau envelope of $\cG$ defined by
\begin{equation}\label{eqn:moreau}
    \cG^\tau(\mu):= \inf_{\nu \in \cP_2(\R^d)} \cG(\nu)+\frac{1}{2\tau}W_2^2(\mu,\nu).
\end{equation}
We recall that this function is continuous, see \cite[Lemma 3.1.2]{AGS08}. While in Hilbert spaces the Moreau-Yosida regularization preserves the convexity of proper, lower semicontinuous functions, this property fails in the 2-Wasserstein space. However the unique example of the failing of this property we found in the literature is \cite[(3-1)]{CarlenCraig2013}, which is a highly degenerate functional, namely the characteristic of a singleton $\cG=\mathbf{1}_{\{\mu\}}$. 

Here we find a very big class of funtionals, which includes for example most internal energy functionals, for which this regularization property fails.

\begin{prop}
Let $d\geq 2$ and $\cG$ proper, lower semicontinuous and convex along generalized geodesics. Assume that $D(\cG)\cap \{\delta_x\mid x\in \R^d\}=\emptyset$. Then $\cG^{\tau}$ is not $\lambda$-convex along generalized geodesics for any $\tau$ such that $\lambda \tau >-1$. In particular for every $\tau>0$ we have that $\cG^{\tau}$ is not convex along generalized geodesics.
\end{prop}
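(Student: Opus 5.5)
The plan is to argue by contradiction. Assume that $\cG^\tau$ is $\lambda$-convex along (outer) generalized geodesics for some $\tau$ with $\lambda\tau>-1$. First upgrade this to \emph{total} $\lambda$-convexity of $\cG^\tau$. Then use the rigidity of totally convex functionals, namely Lemma~\ref{lem:prox-delta} (prox of deltas are deltas), to contradict the hypothesis $D(\cG)\cap\{\delta_x\}=\emptyset$. Two facts make the upgrade plausible: $\cG^\tau$ is finite and continuous on the whole of $\cP_2(\R^d)$ (\cite[Lemma 3.1.2]{AGS08}), and convexity is required along generalized geodesics with an \emph{arbitrary} base. With full domain there are no domain obstructions, as there were in Example~\ref{ex:in-not out}. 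The hypothesis $d\ge 2$ should enter exactly here, as it does in that example: with $d\geq 2$, generalized geodesics with an absolutely continuous base produce couplings that are far from optimal.

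The first step is to show that a continuous, everywhere finite functional which is $\lambda$-convex along generalized geodesics is totally $\lambda$-convex; this is the step I expect to be the main obstacle. By continuity and density it suffices to check \eqref{eqn:totconv} for uniform discrete measures $\mu_0=\frac1N\sum\delta_{x_i}$, $\mu_1=\frac1N\sum\delta_{z_i}$ and a permutation coupling $\gamma=\frac1N\sum\delta_{(x_i,z_i)}$. It is also enough to approximate such a coupling in $W_2$ on $(\R^d)^2$, since both sides of \eqref{eqn:totconv} are continuous in $\gamma$. The goal is then an absolutely continuous base $\nu$ and a partition of $\R^d$ into cells $C_1,\dots,C_N$ of $\nu$-mass $1/N$. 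On this partition the two maps $T_0\equiv x_i$ and $T_1\equiv z_i$ on $C_i$ must both be optimal, i.e.\ gradients of convex functions. This amounts to $\{C_i\}$ being simultaneously a Laguerre (power) diagram for the sites $(x_i)$ and for the sites $(z_i)$. Exact coincidence is too much to ask. I would first try to build it approximately: split each site into many nearby sub-sites, using that in $d\ge2$ one can rotate and nest thin cells. The model is the computation in Example~\ref{ex:in-not out}, where a ball base yields a product-type coupling. If this approximation turns out to be too hard, the fallback is weaker: obtain convexity only along the special couplings $\mu_n\otimes\mu_n$ used in Propositions~\ref{prop:delta-bar} and \ref{prop:mon-converg-sequence}, which is all the second step needs.

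Second step: once $\cG^\tau$ is totally $\lambda$-convex, choose $s>0$ with $\lambda s>-1$ and apply Lemma~\ref{lem:prox-delta} to $\cG^\tau$. This gives $\prox_{s\cG^\tau}(\delta_x)=\delta_{y}$ for every $x$. On the other hand, the semigroup property of Moreau envelopes, $(\cG^\tau)^s=\cG^{\tau+s}$, comes from exchanging the infima. Tracking minimizers through this exchange, a minimizer $\eta$ of $\cG^\tau(\eta)+\frac1{2s}W_2^2(\delta_x,\eta)$ must lie on a geodesic between $\delta_x$ and a minimizer $\rho\in\prox_{(\tau+s)\cG}(\delta_x)$. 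Concretely, $\eta=\big(\tfrac{\tau}{\tau+s}\,\mathrm{Id}+\tfrac{s}{\tau+s}\,x\big)_\sharp\rho$, which is a dilation of $\rho$ about $x$. This holds because geodesics from a Dirac are dilations, and splitting $W_2(\delta_x,\rho)$ along the geodesic is optimal.

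To conclude, $\rho\in D(\cG)$, so $\rho$ is not a Dirac, and a nontrivial dilation of a non-Dirac measure is not a Dirac either. Hence $\prox_{s\cG^\tau}(\delta_x)$ is not a Dirac, contradicting the second step. The final claim, that $\cG^\tau$ is not convex along generalized geodesics for every $\tau>0$, is the special case $\lambda=0$.
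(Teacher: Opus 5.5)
\paragraph{The gap is your first step.} Everything rests on the claim that a continuous functional which is $\lambda$-convex along generalized geodesics is totally $\lambda$-convex, and you leave this as a plan rather than a proof. The paper does not prove it either: it is exactly \cite[Theorem 9.1]{CSS_2023}, which the paper applies to $\cG^\tau-\frac{\bar\lambda}{2}M_2$. Without that citation or a proof, your argument is incomplete.

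The route you sketch is also doubtful. Suppose a permutation coupling $x_i\mapsto z_i$ is realized by a generalized geodesic with absolutely continuous base $\nu$ and cells $C_i$. Monotonicity of both optimal maps forces $\langle a-b,x_i-x_j\rangle\ge 0$ and $\langle a-b,z_i-z_j\rangle\ge 0$ for $\nu$-a.e.\ $a\in C_i$, $b\in C_j$. So whenever $z_i-z_j$ is a negative multiple of $x_i-x_j$, the cell $C_i$ must lie in a hyperplane, which is impossible for an absolutely continuous base. Whether approximate versions exist for every coupling is exactly what you leave open. Your fallback is no easier. Written as a permutation coupling, $\mu\otimes\mu$ pairs the atoms $(x_i,x_j)$ and $(x_j,x_i)$, which is precisely this antiparallel configuration; they collide at $t=\frac12$.

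\paragraph{A simpler way to close it.} You only need geodesic convexity and continuity of $\cG^\tau$.
\begin{itemize}
\item Take uniform discrete marginals and a permutation coupling $\gamma$ whose particles $(1-t)x_i+tz_i$ never collide for $t\in[0,1]$. For $d\ge 2$ this holds after an arbitrarily small perturbation; this is where $d\geq2$ enters.
\item The particles then stay at mutual distance at least some $\delta>0$. Hence for $|s-t|$ small the identity matching is the unique optimal plan between $\mu_t$ and $\mu_s$, and the curve is locally the unique geodesic.
\item Therefore $t\mapsto \cG^\tau(\mu_t)-\frac{\lambda}{2}t^2\int\|x-z\|^2\,d\gamma$ is continuous and locally convex, hence convex on $[0,1]$.
\item Perturbation and continuity of $\cG^\tau$ then give \eqref{eqn:totconv}.
\end{itemize}

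\paragraph{Your second step is correct, and differs from the paper.} Grant the first step. The paper subtracts $\frac{\bar\lambda}{2}M_2$ with $\bar\lambda<\lambda$, so that the resulting functional is totally strongly convex. Its unique minimizer is then a Dirac $\delta_0$ by Theorem~\ref{thm:deltamin}. Since $\delta_0\notin D(\cG)$ and $\cG$ is lower semicontinuous, sequences almost realizing $\cG^\tau(\delta_0)$ stay $W_2$-away from $\delta_0$, and strong convexity produces a positive gap.

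You instead apply Lemma~\ref{lem:prox-delta} to $\cG^\tau$ together with $(\cG^\tau)^s=\cG^{\tau+s}$. For $\rho\in\prox_{(\tau+s)\cG}(\delta_x)$, the point $\eta$ on the unique geodesic from $\delta_x$ to $\rho$ attains $(\cG^\tau)^s(\delta_x)$. By uniqueness it equals $\prox_{s\cG^\tau}(\delta_x)$, which is a Dirac, so $\rho$ would be a Dirac, contradicting $\rho\in D(\cG)$. This ending avoids the auxiliary $\bar\lambda$ and explains why Dirac-ness propagates.

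Two points need fixing or justifying:
\begin{itemize}
\item You need $\prox_{(\tau+s)\cG}(\delta_x)\neq\emptyset$, whereas the paper works only with minimizing sequences. Existence does hold: using convexity of $\cG$ along generalized geodesics with base $\delta_x$, minimizing sequences are Cauchy, and lower semicontinuity gives a minimizer.
\item Your coefficients are swapped. Since $W_2(\delta_x,\eta)=\frac{s}{\tau+s}W_2(\delta_x,\rho)$, the correct formula is $\eta=\big(\tfrac{\tau}{\tau+s}\,x+\tfrac{s}{\tau+s}\,\mathrm{Id}\big)_\sharp\rho$. This is harmless for the conclusion.
\end{itemize}
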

\begin{proof}Let us prove the first point: arguing by contradiction suppose that $\cG^{\tau}$ is $\lambda$-convex for some $\lambda$ such that $\lambda+ \frac 1{\tau} >0$. Then there exists $\bar{\lambda} < \lambda$ with $\bar{\lambda}+ \frac 1{\tau} >0$. Let us consider the functional
     $$\cF^{\tau}(\mu)= \cG^{\tau}(\mu)- \frac{\bar{\lambda}}2 M_2(\mu):$$
    in particular we have $\cF^{\tau}$ is strongly convex along generalized geodesics and continuous, thus by \cite[Theorem 9.1]{CSS_2023} we have that it is totally strongly convex. In particular there exists a unique minimizer for $\cF^{\tau}$ and by Theorem~\ref{thm:deltamin} it is a Dirac delta: without loss of generality we can assume $\argmin \cF^{\tau} = \{\delta_0\}.$
    By definition of $\cG^{\tau}$ there exists a sequence $(\mu_n)_n \in \P_2(\R^d)$ such that $$\cG^{\tau}(\delta_0)=\lim_n\cG(\mu_n)+ \frac 1{2\tau} W_2^2(\mu_n, \delta_0)= \lim_n\cG(\mu_n)+ \frac 1{2\tau} M_2(\mu_n).$$
    Notice that we have $\liminf_{n} W_2(\mu_n,\delta_0) > 0 $ otherwise by the lower semicontinuity of $\cG$ we would get $\cG(\delta_0) < \infty$: in particular there exists $\delta>0$ such that $M_2(\mu_n) >\delta$ for all $n \in \N$. Since $\delta_0$ is the minimizer for $\cF^{\tau}$, we have
    $$\cG^{\tau}(\delta_0) = \cF^{\tau}(\delta_0) \leq \cF^{\tau} (\mu_n) = \cG(\mu_n) - \frac {\bar{\lambda}}2 M_2(\mu_n).$$
    In particular 
    $$\cG(\mu_n) + \frac 1{2 \tau} M_2(\mu_n) \geq  \cG^{\tau} (\delta_0) + \frac 12 \left( \bar{\lambda} + \frac 1{\tau} \right) M_2(\mu_n) \geq \cG^{\tau} (\delta_0) + \frac \delta 2 \left( \bar{\lambda} + \frac 1{\tau} \right);$$
    taking the limit as $n \to \infty$ we reach a contradiction.

    The second point is a direct consequence of the first point: in fact if $\lambda=0$ it is valid for every $\tau>0$.

\end{proof}

We underline that $d\geq 2$ is needed since if $d=1$ then if $\cG$ is convex along geodesics also $\cG^{\tau}$ is convex along geodesics since $(\P_2(\R), W_2)$ is isometric to a convex cone in a Hilbert space.

Moreover if $\cG$ is totally convex the Moreau-Yosida regularization is totally convex itself, as shown in \cite[Section 3.1]{PiSa}. We provide here a stronger converse if $d\geq 2$, namely that geodesic convexity of every regularization yields total convexity of the starting functional.

\begin{theorem}  Let $d \geq 2$ and $\cG$ proper, lower semicontinuous and convex along generalized geodesics. Assume that $\cG^{\tau}$ is convex along geodesics for any $\tau>0$. Then $\cG$ is totally convex.
\end{theorem}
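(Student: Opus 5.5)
The plan is to transfer total convexity from the regularizations $\cG^{\tau}$ to $\cG$ by letting $\tau\to 0$. The regularizations are better behaved than $\cG$: they are continuous and, by hypothesis, convex along geodesics. In dimension $d\geq 2$ these two properties should already force total convexity, by the rigidity result of \cite{CSS_2023} that was used in the proof of the previous proposition.

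\textbf{Step 1 (total convexity of each $\cG^\tau$).} Fix $\tau>0$. Since $\cG$ is proper, lower semicontinuous and convex along generalized geodesics, $\cG^{\tau}$ is finite and continuous on $\cP_2(\R^d)$ for $\tau$ small enough to be well defined, by \cite[Lemma 3.1.2]{AGS08}. By hypothesis $\cG^\tau$ is convex along geodesics. I would then invoke \cite[Theorem 9.1]{CSS_2023} to conclude that $\cG^{\tau}$ is totally convex.
\begin{itemize}
\item I expect this to be the main obstacle. The cited rigidity statement is the only place where $d\geq 2$ enters; indeed for $d=1$ the conclusion fails, since every geodesically convex functional has geodesically convex envelopes.
\item I must check the exact hypotheses of \cite[Theorem 9.1]{CSS_2023}. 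In the previous proposition it was applied to a strongly convex functional along generalized geodesics, whereas here I only know geodesic convexity of a continuous functional.
\item If the theorem truly requires strong convexity, I would first apply it to $\cG^{\tau}+\frac{\eps}{2}M_2$. This functional is continuous and $\eps$-convex along geodesics. I would then use \eqref{eqn:equiv_lambdaconv} to remove the quadratic term after establishing total $\eps$-convexity, and finally let $\eps\to0$ in the inequality \eqref{eqn:totconv}.
\item If the theorem requires convexity along generalized geodesics rather than along geodesics, the argument needs a genuinely different input. In that case I would try to prove geodesic-to-total convexity directly for continuous functionals when $d\ge2$, approximating any coupling by optimal ones after embedding in a higher-dimensional direction.
\end{itemize}

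\textbf{Step 2 (monotone convergence $\cG^\tau\uparrow\cG$).} For every $\mu$ the map $\tau\mapsto \cG^\tau(\mu)$ is nonincreasing in $\tau$ and bounded above by $\cG(\mu)$. It converges to $\cG(\mu)$ as $\tau\downarrow 0$. This is the standard argument using the minimizers $\mu_\tau=\prox^W_{\tau\cG}(\mu)$:
\begin{itemize}
\item The quadratic lower bound on $\cG$ coming from convexity along generalized geodesics gives $W_2(\mu_\tau,\mu)\to0$ whenever $\sup_\tau\cG^\tau(\mu)<\infty$.
\item Lower semicontinuity then yields $\liminf\cG(\mu_\tau)\geq \cG(\mu)$.
\item If instead $\cG^\tau(\mu)$ stays bounded while $\cG(\mu)=+\infty$, the same argument gives a contradiction.
\end{itemize}

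\textbf{Step 3 (passing to the limit).} Fix $\mu_0,\mu_1$, a coupling $\gamma\in\Gamma(\mu_0,\mu_1)$, $s\in[0,1]$ and $\mu_s=(\pi_s^{1\to2})_\#\gamma$. By Step 1 and $\cG^\tau\le\cG$,
\[
\cG^\tau(\mu_s)\le (1-s)\cG^\tau(\mu_0)+s\cG^\tau(\mu_1)\le (1-s)\cG(\mu_0)+s\cG(\mu_1).
\]
Letting $\tau\to0$ and using Step 2 on the left-hand side gives \eqref{eqn:totconv} with $\lambda=0$ for $\cG$. Since $\gamma$ was an arbitrary coupling, $\cG$ is totally convex.
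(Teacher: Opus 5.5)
Your proposal is correct and follows the paper's proof. Each $\cG^\tau$ is continuous and convex along geodesics, hence totally convex by \cite[Theorem 9.1]{CSS_2023}, which the paper applies directly so your fallback branches are not needed; total convexity then passes to $\cG$ through the pointwise convergence $\cG^\tau\to\cG$ as $\tau\downarrow 0$, and your Step 3, which bounds the right-hand side using $\cG^\tau\le\cG$, simply writes out the limit the paper leaves implicit.
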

\begin{proof}
We know that $\cG^{\tau}$ is $W_2$-continuous and convex along geodesics: in particular by \cite[Theorem 9.1]{CSS_2023} we have that $\cG^{\tau}$ is totally convex. Considering the validity of \eqref{eqn:totconv} for $\cG$ and using the fact that $\cG^{\tau} \to \cG$ pointwise, we can conclude that $\cG$ is totally convex as well.
\end{proof}

    %It would be interesting to explore the convexity conditions on $\cG$ under which the convexity of the Moreau-Yosida regularization can be recovered. 

\section{Non-expansivity and weak non-expansivity} \label{sec:weaklyconvex}

Here, we consider some special cases, based on conditions placed either on the starting points or on the functionals, where generalizations of  non-expansivity   condition holds.

\subsection{Convexity along geodesics and weak non-expansivity  }\label{subsec:weak-geo}
%\subsubsection*{Existing results}

In \cite{adve2020nonexpansiveness}, the authors introduce the notion of weak non-expansivity   to investigate the non-expansivity   of the Wasserstein projection operator on the set of absolutely continuous probability measures with density bounded above by a constant. 
In particular, denoting $K\coloneqq \{\mu=\rho\mathcal{L}^d\in \cP_2(\Omega)\mid 0\leq\rho\leq \lambda \}$  for $\Omega\subseteq \R^d$  convex and $\lambda>0$, they have the following.
%Recall that ${K} := \{\rho \in \proj^{a.c.}(\Omega) : \rho \le 1\}$ and  $P_{{K}}$  the $W_2$ projection onto ${K}$. Throughout this section, let $\mu, \nu \in \mathcal{P}_2(\Omega)$, and set $\tilde{\mu} := \proj_{{K}}(\mu)$ and $\tilde{\nu} := \proj_{{K}}(\nu)$ {\color{red}[E: not in our result]}. Denote the optimal transport plan from $\tilde{\mu}$ to $\tilde{\nu}$ by $\eta$. Note that since $\tilde{\mu}, \tilde{\nu}$ are absolutely continuous, $\eta$ is unique and induced by a map.
%In the theorem below one bounds the distance squared between $\mu$ and $\nu$ by the transportation cost of a slightly suboptimal transport plan. This is a sort of ``weak nonexpansivity.''

\begin{theorem}[Theorem 3.1 \cite{adve2020nonexpansiveness}]\label{thm:weak-admes}
Let $\Omega \subseteq \mathbb{R}^d$ be a closed convex set. Let $T, U : \Omega \to \Omega$ stand for the optimal maps from $\tilde \mu\coloneqq\proj_K({\mu}), \tilde \nu\coloneqq\proj_K({\nu})$ to $\mu, \nu$, respectively. Take  $\pi := (T, U)_{\sharp}\eta \in \Gamma(\mu, \nu)$ where $\eta\in \Gamma_{o}(\tilde \mu, \tilde \nu)$. Then
\[
W_2^2(\tilde \mu, \tilde \nu) \le \int_{\Omega \times \Omega} |x - y|^2 \, \mathrm{d}\pi(x,y).
\]
\end{theorem}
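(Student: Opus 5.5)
The plan is to mimic the Hilbert-space proof of non-expansivity of projections: derive a first-order variational inequality for each of the two projections and add them. The variational inequalities will be taken along the \emph{geodesic} joining $\tilde\mu$ and $\tilde\nu$, not along generalized geodesics with base $\mu$ or $\nu$. Everything happens on the single coupling $\eta$, lifted through the maps $T$ and $U$.

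\textbf{Preliminary facts.} First, $\tilde\mu,\tilde\nu\in K$ are absolutely continuous, so by Brenier's theorem the optimal maps $T$ from $\tilde\mu$ to $\mu$ and $U$ from $\tilde\nu$ to $\nu$ exist. The optimal plan $\eta\in\Gamma_o(\tilde\mu,\tilde\nu)$ is also unique and induced by a map $S$. Second, $K$ is convex along geodesics. This is McCann's displacement convexity: along $\sigma_t=((1-t)\mathrm{Id}+tS)_\sharp\tilde\mu$, the density at the point $(1-t)y+tS(y)$ is $\rho_0(y)/\det((1-t)I+t\nabla S(y))$. Concavity of $\det^{1/d}$ on symmetric nonnegative matrices, together with $\rho_0,\rho_1\le\lambda$, gives $\rho_t\le\lambda$, and convexity of $\Omega$ keeps $\sigma_t$ supported in $\Omega$. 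I take this from the standard theory rather than reprove it. I expect this convexity step, with its Jacobian bookkeeping in the non-smooth Brenier setting, to be the main technical obstacle. The rest is elementary.

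\textbf{Variational inequality.} Since $\sigma_t\in K$ and $\tilde\mu=\proj_K(\mu)$, we have $W_2^2(\mu,\tilde\mu)\le W_2^2(\mu,\sigma_t)$ for all $t\in[0,1]$. Use the admissible plan $(T(y),(1-t)y+tz)_\sharp\eta$ between $\mu$ and $\sigma_t$. This gives
\[
\int |T(y)-y|^2\,d\eta \le \int |T(y)-(1-t)y-tz|^2\,d\eta(y,z),
\]
with equality at $t=0$. Expanding the right-hand side in $t$ and letting $t\downarrow 0$ yields
\[
\int \langle T(y)-y,\; z-y\rangle\,d\eta(y,z)\le 0 .
\]
Symmetrically, using the reversed geodesic from $\tilde\nu$ to $\tilde\mu$ and the projection property of $\tilde\nu$, we get
\[
\int\langle U(z)-z,\; y-z\rangle\,d\eta\le 0 .
\]
Adding the two inequalities gives $\int\langle (T(y)-y)-(U(z)-z),\, y-z\rangle\,d\eta\ge 0$.

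\textbf{Conclusion.} Write $T(y)-U(z)=\big[(T(y)-y)-(U(z)-z)\big]+(y-z)$. Then
\[
|T(y)-U(z)|^2\ge |y-z|^2+2\langle (T(y)-y)-(U(z)-z),\,y-z\rangle .
\]
Integrate against $\eta$ and use the summed inequality. This gives
\[
\int|x-y|^2\,d\pi=\int|T(y)-U(z)|^2\,d\eta\ge\int|y-z|^2\,d\eta=W_2^2(\tilde\mu,\tilde\nu),
\]
where the last equality holds because $\eta$ is optimal.

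The only point of care beyond convexity of $K$ is justifying the differentiation at $t=0$. This is harmless because the expression is an explicit quadratic polynomial in $t$ whose coefficients are finite, since all measures lie in $\cP_2$.
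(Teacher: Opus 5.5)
Your proof is correct and is essentially the paper's argument. The paper obtains this statement as the special case $\cF=\mathbf{1}_K$, $\lambda=0$ of Theorem~\ref{thm:weak_ne}. There it glues $(T,\mathrm{Id})_\sharp\tilde\mu$, $\eta$ and $(\mathrm{Id},U)_\sharp\tilde\nu$ into a 3-geodesic plan, tests the minimality of $\tilde\mu$ and $\tilde\nu$ against points of the geodesic between them (along which $\mathbf{1}_K$ is convex), bounds $W_2^2(\mu,\mu_t)$ and $W_2^2(\nu,\mu_{1-t})$ through the same non-optimal plans you use, and lets $t\to 0$ to reach $\int\langle \tilde x-\tilde y, x-y\rangle\,d\gamma\ge W_2^2(\tilde\mu,\tilde\nu)$, which is exactly your summed inequality. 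The only difference is that the paper keeps the term $\int\|(x-\tilde x)-(y-\tilde y)\|^2\,d\gamma$ that you discard, and so obtains the sharper, firmly non-expansive form.
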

This property is called in \cite{adve2020nonexpansiveness} \emph{weak non expansivity}. By recalling \eqref{eq:prox=proj} this is a weak non-expansivity  result for the proximal operator of the functional $ \mathbf{1}_K$.
We generalize the result to all convex functionals.\\ 

We first formalize the definition of weak non expansivity. \begin{definition}
  An operator $A: \mathcal{P}_2(\mathbb{R}^d) \to \mathcal{P}_2(\mathbb{R}^d)$ is said to be weakly non-expansive if for every $\mu,\nu \in \mathcal{P}_2(\mathbb{R}^d)$ there exists a nonempty subset $\Gamma_A(\mu,\nu) \subseteq \Gamma(\mu, \nu)$ such that 
$$W^2_2(A(\mu), A(\nu)) \leq \inf \left\{ \int_{\mathbb{R}^d \times \mathbb{R}^d} \|x-y\|^2 \, d\gamma(x,y) \; : \; \gamma \in \Gamma_A(\mu, \nu)\right\}.$$  
\end{definition}

In this Section we fix as coordinates of $(\R^d)^4$,  $(x, \tilde{x}, \tilde{y}, y)$, we denote by $\pi^{x}, \pi^{\tilde x}, \pi^{y}, \pi^{\tilde y}: (\R^d)^4\to \R^d$,  the projections on the component associated to the corresponding coordinate and   $\pi^{x,\tilde x}:(x,\tilde x, \tilde y, y)\mapsto (x,\tilde x)$ (analogously  for $\pi^{\tilde x,\tilde y}$ and $\pi^{y,\tilde y}$). Finally we denote by $\pi_t^{\tilde x\to \tilde y}$  the $t$ interpolation map $(x,\tilde x, \tilde y, y)\mapsto (1-t)\tilde x+t\tilde y$ of the variables associated to the coordinates $\tilde x$ and $\tilde y$ (and analogously for the other components). For clarity of exposition, we introduce the following definition. 

\begin{definition}\label{def:3geo}
   Given $\mu,\, \nu, \,\tilde{\mu},\, \tilde{\nu} \in \mathcal{P}_2(\mathbb{R}^d)$, a probability $\gamma \in \mathcal{P}((\mathbb{R}^d)^4)$ is a 3-geodesic plan with intermediate points $\tilde{\mu}$ and $\tilde{\nu}$ if $\gamma \in \Gamma(\mu, \tilde{\mu}, \tilde{\nu}, \nu)$ and its marginals satisfy $\pi_{\#}^{x,\tilde{x}}\gamma \in \Gamma_o(\mu, \tilde{\mu})$, $\pi_{\#}^{\tilde{x},\tilde{y}}\gamma \in \Gamma_o(\tilde{\mu}, \tilde{\nu})$, and $\pi_{\#}^{\tilde{y},y}\gamma \in \Gamma_o(\tilde{\nu}, \nu)$. %We call any curve $t\mapsto \pi^{t}_{\#}(\pi_\#^{x, y}\gamma)$ an interpolation  between $\mu$ and $\nu$ induced by a $3$-geodesic plan with intermediate points $\tilde \mu$ and $\tilde \nu$.  
    \end{definition}
This definition generalizes the plan obtained by gluing $( T, \mathrm{Id})_{\#} \proj_K(\mu)$, $\eta$, and $(\mathrm{Id}, U)_{\#} \proj_K(\nu)$,  in the notation of Theorem \ref{thm:weak-admes}. We use it to obtain an analogous weak non-expansivity  result in a more general setting in which one cannot assume the existence of maps.

\begin{theorem}[Weak non-expansivity   for geodesically convex functionals]\label{thm:weak_ne}
    Let $\cF:\P_2(\R^d)\to (-\infty,+\infty]$ be $\lambda$-convex along geodesics. Let $\mu,\nu \in \P_2(\R^d)$ and $\tilde \mu \in \prox_{\tau \cF}^W(\mu)$, $\tilde \nu \in \prox_{\tau \cF}^W(\nu)$ and let $\tau>0$ such that $\lambda \tau >-1$. Then 
    \[(1+\lambda \tau)^2W_2^2( \tilde{\mu}, \tilde{\nu}) \leq \inf_{\gamma} \left\{ \int_{(\R^d)^2} \|x-y\|^2 \, d \pi^{x,y}_{\#}\gamma - \int_{(\R^d)^4} \| x-y- (1+\lambda \tau) (\tilde{x}-\tilde{y})\|^2 \,d \gamma \right\}\]
    where the infimum is among all $3$-geodesic plans $\gamma\in \Gamma(\mu,\tilde\mu,\tilde \nu,\nu)$ with intermediate points $\tilde \mu$ and $\tilde \nu$ such that  $\cF$ is $\lambda$-convex along $t\mapsto (\pi_t^{\tilde x\to\tilde y})_{\#}\gamma$.
    In particular $(1+\lambda \tau)^2W_2^2(\tilde \mu, \tilde \nu)\leq \int \|x-y\|^2 \, d\eta$ for any $\eta =\pi^{x,y}_\#\gamma$ for some $\gamma \in \cP((\R^d)^4)$ as above. 
   \end{theorem}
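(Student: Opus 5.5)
We use the standard variational inequality for the proximal point, namely the first-order optimality along curves through $\tilde\mu$, tested on the curve $t\mapsto \tilde\mu_t:=(\pi_t^{\tilde x\to\tilde y})_\#\gamma$. Fix a 3-geodesic plan $\gamma$ as in the statement, so that $\cF$ is $\lambda$-convex along $\tilde\mu_t$, and define $\tilde\nu_t:=(\pi_t^{\tilde y\to\tilde x})_\#\gamma$, the same curve run backward. By minimality of $\tilde\mu$ we have $\cF(\tilde\mu)+\frac1{2\tau}W_2^2(\mu,\tilde\mu)\le \cF(\tilde\mu_t)+\frac1{2\tau}W_2^2(\mu,\tilde\mu_t)$. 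To bound the last term we use the admissible plan $(\pi^x,\pi_t^{\tilde x\to\tilde y})_\#\gamma\in\Gamma(\mu,\tilde\mu_t)$, which gives
\[
W_2^2(\mu,\tilde\mu_t)\le\int\|x-(1-t)\tilde x-t\tilde y\|^2\,d\gamma .
\]
At $t=0$ the right-hand side equals $W_2^2(\mu,\tilde\mu)$, because $\pi^{x,\tilde x}_\#\gamma$ is optimal.

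We then insert the $\lambda$-convexity inequality $\cF(\tilde\mu_t)\le(1-t)\cF(\tilde\mu)+t\cF(\tilde\nu)-\frac\lambda2 t(1-t)W_2^2(\tilde\mu,\tilde\nu)$. Here the coupling along the curve is $\pi^{\tilde x,\tilde y}_\#\gamma$, which is optimal, so the cost equals $W_2^2(\tilde\mu,\tilde\nu)$. Subtract, divide by $t$ and let $t\downarrow0$. Expanding the square, the first-order term in $t$ is $2\int\langle x-\tilde x,\tilde x-\tilde y\rangle d\gamma$, and we obtain
\[
0\le \cF(\tilde\nu)-\cF(\tilde\mu)-\frac\lambda2W_2^2(\tilde\mu,\tilde\nu)+\frac1\tau\int\langle x-\tilde x,\tilde x-\tilde y\rangle\,d\gamma .
\]
Since the curve is the same one run backward, $\cF$ is $\lambda$-convex along $\tilde\nu_t$ as well. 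The symmetric argument for $\tilde\nu$, using the optimality of $\pi^{\tilde y,y}_\#\gamma$, gives the analogous inequality with $\tilde\mu,\tilde\nu$ swapped and $\langle y-\tilde y,\tilde y-\tilde x\rangle$.

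Summing the two inequalities cancels the $\cF$ terms. After multiplying by $\tau$ this yields
\[
\lambda\tau W_2^2(\tilde\mu,\tilde\nu)\le\int\langle (x-y)-(\tilde x-\tilde y),\tilde x-\tilde y\rangle\,d\gamma .
\]
Since $\int\|\tilde x-\tilde y\|^2d\gamma=W_2^2(\tilde\mu,\tilde\nu)$, this is the same as
\[
(1+\lambda\tau)W_2^2(\tilde\mu,\tilde\nu)\le\int\langle x-y,\tilde x-\tilde y\rangle\,d\gamma .
\]
Write $a=x-y$, $b=(1+\lambda\tau)(\tilde x-\tilde y)$. The identity $\|a\|^2-\|a-b\|^2=2\langle a,b\rangle-\|b\|^2$, integrated and combined with the last inequality (multiplied by $2(1+\lambda\tau)$), gives
\[
\int\|a\|^2-\int\|a-b\|^2\ge 2(1+\lambda\tau)^2W_2^2-(1+\lambda\tau)^2W_2^2=(1+\lambda\tau)^2W_2^2(\tilde\mu,\tilde\nu).
\]
Taking the infimum over admissible $\gamma$ proves the main claim. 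The \emph{in particular} statement follows by dropping the nonnegative subtracted integral.

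The only delicate point is the limit $t\downarrow0$. The proximal values $\cF(\tilde\mu),\cF(\tilde\nu)$ are finite, and the $t$-expansion of the transport cost is a polynomial with integrable coefficients, since all marginals lie in $\cP_2$. So the division by $t$ is legitimate and no differentiability of $W_2^2$ is needed. The optimality of $\pi^{x,\tilde x}_\#\gamma$ and $\pi^{\tilde y,y}_\#\gamma$ is used exactly to make the upper bound agree with $W_2^2$ at $t=0$. The optimality of $\pi^{\tilde x,\tilde y}_\#\gamma$ is used so that the convexity cost, and the term $\int\|\tilde x-\tilde y\|^2d\gamma$, equal $W_2^2(\tilde\mu,\tilde\nu)$.
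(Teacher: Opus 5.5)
Your proposal is correct and follows essentially the paper's argument. You test the minimality of $\tilde\mu$ and $\tilde\nu$ along $(\pi_t^{\tilde x\to\tilde y})_\#\gamma$ and its reversal, bound the transport costs with the plans induced by $\gamma$, and let $t\to 0$ to reach $(1+\lambda\tau)W_2^2(\tilde\mu,\tilde\nu)\le\int\langle x-y,\tilde x-\tilde y\rangle\,d\gamma$, then conclude by polarization; the only cosmetic difference is that you pass to the limit in each inequality before summing, whereas the paper sums first.
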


   \begin{remark}
       Notice that for any $\mu$ and $\nu$ the set of plans in the infimum of the above theorem is always nonempty, since for any $\tilde \eta\in \Gamma_o(\tilde \mu, \tilde \nu)$, $\eta_\mu\in \Gamma_o(\mu, \tilde \mu)$ and $\eta_\nu\in \Gamma_o(\nu,\tilde \nu)$,  the Gluing Lemma ensures the existence of  a plan $\gamma$  on $(\R^d)^4$, such that
    \[\pi_\#^{x,\tilde x}\gamma=\eta_\mu  \quad \pi_\#^{y,\tilde y}\gamma=\eta_\nu, \quad \text{and} \quad \pi_\#^{\tilde x,\tilde y}\gamma=\tilde \eta.\] So, to satisfy the hypotheses of the theorem,  it is enough to take $\tilde \eta\in \Gamma_o(\tilde \mu, \tilde \nu) $ which induces a geodesic along which $\cF$ is $\lambda$-convex.
   \end{remark}
\begin{proof}
Let $
\gamma \in \Gamma(\mu,\tilde\mu,\tilde\nu,\nu)$,
be a $3$-geodesic plan with intermediate points $\tilde\mu$ and $\tilde\nu$ and such that  $\cF$ is $\lambda$-convex along $(\mu_t)_{t\in [0,1]}$ with $\mu_t\coloneqq(\pi_t^{\tilde x\to\tilde y})_{\#}\gamma$.
The following figure illustrates the situation. 

\begin{center}
\begin{tikzpicture}[
  node distance=1.5cm,
  every node/.style={font=\normalsize}
]
  % Corner measures (no boxes)
  \node (mu) {$\mu$};
  \node (nu) [right=of mu] {$\nu$};
  \node (tmu)  [below=of mu] {$\tilde \mu$};
  \node (tnu)  [below=of nu] {$\tilde \nu$};
 % Center gamma (exact center of the square)
  \node (gam) at (1,-1) {$\gamma$};
 % Square edges with labels (no dashed lines)
  \draw (mu) -- node[above] {\small{$  \Gamma$ }} (nu);
 \draw (tmu)  -- node[left]  {\small{$\Gamma_o$}} (mu);
  \draw (tnu)  -- node[right] {\small{$\Gamma_o$}} (nu);
 \draw (tmu)  -- node[below] {\small{$\Gamma_o$}} (tnu);
\end{tikzpicture}
\end{center}

We have
\begin{align}\label{eq:expanded cost}
    \int \|x-y\|^2 \, &\d\pi_\#^{x,y}\gamma(x,y) = \int \|\tilde x - \tilde y + x -\tilde x - (y-\tilde y)\|^2 \, d\gamma(x, \tilde x,\tilde y, y)\\
    \nonumber &= \int \|\tilde x-\tilde y\|^2 \, d\gamma + 2\int \langle \tilde x - \tilde y, x-\tilde x - (y -\tilde y)\rangle \, d\gamma   + \int \| x - \tilde x - (y-\tilde y)\|^2 \, d\gamma\\
    \nonumber &= W_2^2(\tilde \mu, \tilde \nu)+2\int \langle \tilde x - \tilde y, x-\tilde x - (y -\tilde y)\rangle \, d\gamma + \int \| x - \tilde x - (y-\tilde y)\|^2 \, d\gamma.
\end{align}
It suffices to show that $\int \langle \tilde x - \tilde y, x-\tilde x - (y -\tilde y)\rangle \, d\gamma(x, \tilde x,\tilde y, y)$ is non negative. Since $\cF$ is $\lambda$-convex along $(\mu_t)_{t\in[0,1]}$, we have
\begin{align}\label{eq:proxmin}
\cF(\tilde \mu)+\frac{1}{2\tau} W_2^2(\tilde \mu, \mu)&\leq \cF(\mu_t)+\frac{1}{2\tau} W_2^2( \mu_t, \mu) \\& \leq (1-t) \cF(\tilde \mu) + t\cF(\tilde \nu) + \frac{1}{2\tau} W_2^2(\mu_t, \mu)- \frac{\lambda}{2}t(1-t)W_2^2(\tilde \mu, \tilde \nu), \notag
    \end{align}
\begin{align*}\cF(\tilde \nu)+\frac{1}{2\tau} W_2^2(\tilde \nu, \nu)& \leq \cF(\mu_{1-t})+\frac{1}{2\tau} W_2^2( \mu_{1-t}, \nu) \\ &\leq  t\cF(\tilde \mu) + (1-t) \cF(\tilde \nu) + \frac{1}{2\tau} W_2^2( \mu_{1-t}, \nu)-\frac{\lambda}{2}t(1-t)W_2^2(\tilde \mu, \tilde \nu),\end{align*}
which combined give
\[W_2^2(\tilde \mu, \mu) + W_2^2(\tilde \nu, \nu)+2\tau{\lambda}t(1-t)W_2^2(\tilde \mu, \tilde \nu) \leq W_2^2(\mu_t, \mu) + W_2^2( \mu_{1-t}, \nu).\]
On the other hand, for any $t\in [0,1]$, let $\pi^{x,(\tilde x\to \tilde y)_t}:(\R^d)^4\to(\R^d)^2$ be the map $(x,\tilde x, \tilde y, y)\mapsto (x,(1-t)\tilde x+t\tilde y)$, consider $\pi^{x,(\tilde x\to \tilde y)_t}_\# \gamma\in \Gamma(\mu, \mu_t)$.  Therefore, 
\begin{align} \nonumber 
W_2^2(\mu, \mu_t) & \leq \int \|x-z\|^2 \, d\pi^{x,(\tilde x,\tilde y)_t}_{\# }\gamma(x,z)= \int \|x- \tilde x + t (\tilde x - \tilde y)\|^2 \, d\gamma(x, \tilde x, \tilde y, y)\\
& \label{eq:nonopt} = \int \|x-\tilde x\|^2 \, d\gamma + 2 t \int \langle x-\tilde x, \tilde x - \tilde y\rangle \, d\gamma + t^2 \int \|\tilde x -\tilde y \|^2 \, d\gamma\\
& \nonumber= W_2^2(\mu,\tilde \mu) + 2 t \int \langle x-\tilde x, \tilde x - \tilde y\rangle \, d\gamma + t^2 W_2^2(\tilde \mu, \tilde \nu), 
\end{align}
where we used the fact that $\pi^{x,\tilde x}_\#\gamma \in \Gamma_{o}(\mu, \tilde \mu)$ and $\pi^{\tilde x,\tilde y}_\#\gamma \in \Gamma_{o}(\tilde\mu, \tilde \nu)$.
Similarly, $\pi^{(\tilde x,\tilde y)_{1-t},y}_\# \gamma\in \Gamma( \mu_{1-t},\nu)$, so that 
\[\begin{aligned}
W_2^2(\mu_{1-t},\nu) & \leq \int \|y-z\|^2 \, d\pi^{(\tilde x,\tilde y)_{1-t},y}_\# \gamma\\
&= W_2^2(\nu,\tilde \nu) + 2 t \int \langle y-\tilde y, \tilde y - \tilde x\rangle \, d\gamma + t^2 W_2^2(\tilde \mu, \tilde \nu), 
\end{aligned}\]
where we used the fact that $\pi^{\tilde y, y}_\#\gamma \in \Gamma_{o}(\tilde \nu,\nu)$ and $\pi^{\tilde x,\tilde y}_\#\gamma \in \Gamma_{o}(\tilde \mu, \tilde \nu)$
Combining all the above and dividing by $t$, we obtain 
\[(2\lambda \tau(1-t)-2t )W_2^2(\tilde \mu, \tilde \nu) \leq 2\int \langle \tilde x - \tilde y, x-\tilde x - (y -\tilde y)\rangle \, d\gamma; \]
letting $t \to 0$ and rearranging we get
\[2\int \langle \tilde x - \tilde y, x- y \rangle \, d\gamma \geq 2(1+\lambda \tau) W_2^2(\tilde \mu, \tilde \nu).\]
Multiplying by $1+\lambda \tau$ and using that $2c < v,w>= \|v\|^2 + c^2 \|w\|^2 - \|v-c w\|^2$ we finally get
\[\int \|x-y\|^2  \, d \gamma  + (1+\lambda \tau)^2 W_2^2(\tilde \mu, \tilde \nu) - \int \| x-y - (1+\lambda \tau)(\tilde{x} - \tilde{y}) \|^2 \, d \gamma \geq 2(1+\lambda \tau)^2 W_2^2(\tilde \mu, \tilde \nu),\]
thus we conclude.
\end{proof}

By this theorem one can show that the non-expansivity  property holds if one of the two starting measures is a delta.

\begin{corollary}
    Let $\cF:\P_2(\R^d) \to (-\infty, +\infty]$ be $\lambda$-convex along geodesics and let $\mu,\nu$ be such that $\Gamma(\mu,\nu)=\Gamma_o(\mu, \nu)$. Then 
    \[(1+\lambda \tau)^2W_2^2(\prox_{\tau \cF}^W(\mu),\prox_{\tau \cF}^W(\nu)) \leq W_2^2(\mu,\nu).\]
\end{corollary}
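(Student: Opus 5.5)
This follows from Theorem~\ref{thm:weak_ne} by choosing a suitable $3$-geodesic plan and noting that, under the hypothesis $\Gamma(\mu,\nu)=\Gamma_o(\mu,\nu)$, its $(x,y)$-marginal is automatically optimal. Set $\tilde\mu=\prox_{\tau\cF}^W(\mu)$ and $\tilde\nu=\prox_{\tau\cF}^W(\nu)$. If the prox is multivalued, any choice of elements works, since the theorem only uses $\tilde\mu\in\prox$ and $\tilde\nu\in\prox$.

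\textbf{Step 1: build the plan.} Since $\cF$ is $\lambda$-convex along geodesics, there is $\tilde\eta\in\Gamma_o(\tilde\mu,\tilde\nu)$ whose induced geodesic satisfies the $\lambda$-convexity inequality. Choose $\eta_\mu\in\Gamma_o(\mu,\tilde\mu)$ and $\eta_\nu\in\Gamma_o(\tilde\nu,\nu)$. Apply the Gluing Lemma, as in the Remark following Theorem~\ref{thm:weak_ne}, to obtain $\gamma\in\Gamma(\mu,\tilde\mu,\tilde\nu,\nu)$ with
\[
\pi^{x,\tilde x}_\#\gamma=\eta_\mu,\qquad \pi^{\tilde x,\tilde y}_\#\gamma=\tilde\eta,\qquad \pi^{\tilde y,y}_\#\gamma=\eta_\nu .
\]
This $\gamma$ is a $3$-geodesic plan with intermediate points $\tilde\mu,\tilde\nu$, and $\cF$ is $\lambda$-convex along $t\mapsto(\pi_t^{\tilde x\to\tilde y})_\#\gamma$. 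So $\gamma$ is admissible in the infimum of Theorem~\ref{thm:weak_ne}.

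\textbf{Step 2: apply the theorem.} The "in particular" part of Theorem~\ref{thm:weak_ne} gives
\[
(1+\lambda\tau)^2W_2^2(\tilde\mu,\tilde\nu)\le\int\|x-y\|^2\,d\pi^{x,y}_\#\gamma .
\]
(Equivalently, drop the nonnegative subtracted term in the full statement.)

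\textbf{Step 3: identify the right-hand side.} The plan $\pi^{x,y}_\#\gamma$ lies in $\Gamma(\mu,\nu)$, which by hypothesis equals $\Gamma_o(\mu,\nu)$. Hence $\int\|x-y\|^2\,d\pi^{x,y}_\#\gamma=W_2^2(\mu,\nu)$, and the claim follows.

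There is no real obstacle. The only point to check is that the hypothesis makes every coupling optimal, which is exactly what lets us discard the uncontrolled $(x,y)$-marginal produced by gluing. This covers the motivating case $\mu=\delta_x$, where $\Gamma(\mu,\nu)=\{\delta_x\otimes\nu\}$ is a singleton.
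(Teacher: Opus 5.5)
Your proposal is correct and matches the argument the paper intends; the corollary is stated without a written proof, as an immediate consequence of Theorem~\ref{thm:weak_ne}. You take any admissible $3$-geodesic plan (nonempty by the gluing remark), use the ``in particular'' bound of Theorem~\ref{thm:weak_ne}, and note that the $(x,y)$-marginal lies in $\Gamma(\mu,\nu)=\Gamma_o(\mu,\nu)$, so its cost equals $W_2^2(\mu,\nu)$.
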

Notice that $\Gamma(\mu,\nu)=\Gamma_o(\mu,\nu)$ if for example $\Gamma(\mu,\nu)$ is a singleton. This happens if and only if $\mu$ or $\nu$ is a Dirac delta. Non-expansivity is recovered if $\lambda=0$.\\

It is worth noting that Theorem \ref{thm:weak_ne} can be seen as a  generalization to the Wasserstein space of the well known property called  \emph{firm non-expansivity}  of proximal operators of convex functions in $\R^d$.  As the proof of the following corollary shows, for instance, if the two measures are Dirac deltas, one recovers  the  mentioned property. Although the result is largely known, we  provide the proof as a consequence of Theorem \ref{thm:weak_ne}. We recall that   \begin{align*}
\prox_{\tau V}(x)\coloneqq \inf_{y\in \R^d}V(y)+\frac{1}{2\tau}||x-y||^2,
\end{align*}
is the classical proximal operator in $\R^d$. 

\begin{corollary}
    Let $V:\R^d\to (-\infty,+\infty]$ be convex and $x_0,y_0\in \R^d$. It holds
    \[\|\prox_{\tau V}(x_0)-\prox_{\tau V}(y_0)\|^2 \leq \|x_0-y_0\|^2 - \|x_0-\prox_{\tau V}(x_0)- (y_0-\prox_{\tau V}(y_0))\|^2.\]
   \end{corollary}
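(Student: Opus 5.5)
The plan is to specialize Theorem~\ref{thm:weak_ne} to Dirac masses. Take $\mu=\delta_{x_0}$ and $\nu=\delta_{y_0}$, and let $\cF(\rho)\coloneqq\int V\,d\rho$ be the potential energy associated to $V$. A potential energy with convex $V$ is convex along any curve induced by a coupling (indeed totally convex), so in particular it is convex along geodesics. Hence Theorem~\ref{thm:weak_ne} applies with $\lambda=0$.

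The first step is to identify the Wasserstein proximal points. Since $\cF$ is totally convex, Lemma~\ref{lem:prox-delta} gives $\prox_{\tau\cF}^W(\delta_{x_0})=\delta_{\bar x}$ for some $\bar x$. Restricted to Dirac masses, the functional being minimized is $V(z)+\frac1{2\tau}\|x_0-z\|^2$, so $\bar x=\prox_{\tau V}(x_0)$, and likewise $\prox_{\tau\cF}^W(\delta_{y_0})=\delta_{\bar y}$ with $\bar y=\prox_{\tau V}(y_0)$. If one wants to avoid invoking the lemma, a direct route also works: by Jensen and convexity of $V$, replacing any candidate $\rho$ by $\delta_{M(\rho)}$ does not increase either term. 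This only exhibits a Dirac minimizer, so uniqueness (for instance from strict convexity of the quadratic term) is needed to conclude that the minimizer is the Dirac mass.

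The second step is to compute the infimum in Theorem~\ref{thm:weak_ne}. All four marginals $\delta_{x_0},\delta_{\bar x},\delta_{\bar y},\delta_{y_0}$ are Dirac masses, so the only admissible plan is $\gamma=\delta_{(x_0,\bar x,\bar y,y_0)}$. Every two-marginal projection of $\gamma$ is then the unique, hence optimal, coupling, so $\gamma$ is a $3$-geodesic plan. The interpolation $(\pi_t^{\tilde x\to\tilde y})_\#\gamma=\delta_{(1-t)\bar x+t\bar y}$ is a geodesic along which $\cF$ is convex by convexity of $V$. Plugging $\gamma$ into the theorem with $\lambda=0$ gives
\[
\|\bar x-\bar y\|^2\le \|x_0-y_0\|^2-\|x_0-y_0-(\bar x-\bar y)\|^2 .
\]
Rearranging the last norm as $\|x_0-\bar x-(y_0-\bar y)\|^2$ yields exactly the claimed inequality.

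The only delicate point is the identification of $\prox^W$ on Dirac masses, namely showing that the minimizer is a Dirac and that it coincides with the Euclidean prox. Beyond that, the argument is a direct substitution into Theorem~\ref{thm:weak_ne}. If $V$ is allowed to take the value $+\infty$, one should also note that $\cF$ is proper and lower semicontinuous as soon as $V$ is, so that the proximal operator is well defined.
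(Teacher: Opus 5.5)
Your proposal is correct and follows the paper's proof essentially verbatim. You take $\cF(\rho)=\int V\,d\rho$, identify $\prox^W_{\tau\cF}(\delta_{x_0})=\delta_{\prox_{\tau V}(x_0)}$ via Lemma~\ref{lem:prox-delta}, note that the only admissible $3$-geodesic plan is $\delta_{(x_0,\bar x,\bar y,y_0)}$, and substitute it into Theorem~\ref{thm:weak_ne} with $\lambda=0$; your explicit check that the Dirac minimizer is the Euclidean prox (the objective on Diracs is $V(z)+\frac{1}{2\tau}\|x_0-z\|^2$) fills in a step the paper leaves implicit.
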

\begin{proof}
    Let $\cF: \mu \mapsto \int V \, d\mu$ and $\mu= \delta_{x_0}$, $\nu= \delta_{y_0}$ and set $p:= \prox_{\tau V}(x_0)$ and $q:=\prox_{\tau V}(y_0)$, then by recalling Lemma \ref{lem:prox-delta}
    \[\tilde \mu:=\prox_{\tau \cF}^W(\mu) = \delta_{p}, \qquad \tilde \nu := \prox_{\tau \cF}^W(\nu) = \delta_{q}.\]
    Apply Theorem~\ref{thm:weak_ne} to $\mu,\nu$ (with $\lambda=0$). The (unique) couplings
$\eta_\mu\in\Gamma_o(\mu,\tilde\mu)$, $\eta_\nu\in\Gamma_o(\nu,\tilde\nu)$, $\tilde\eta\in\Gamma_o(\tilde\mu,\tilde\nu)$
are Dirac measures:
\[
\eta_\mu=\delta_{(x_0,p)},\qquad
\eta_\nu=\delta_{(y_0,q)},\qquad
\tilde\eta=\delta_{(p,q)},
\]
hence the  plan obtained by gluing, $\gamma$, is necessarily $\gamma=\delta_{(x_0,p,q,y_0)}$ and $\pi^{x,y}_{\#}\gamma=\delta_{(x_0,y_0)}$.
Therefore,
\[
\int \|x-\tilde x-(y-\tilde y)\|^2\,d\gamma
=
\|x_0-p-(y_0-q)\|^2.
\]
Plugging into Theorem~\ref{thm:weak_ne} yields
\[
\|p-q\|^2
\le
\|x_0-y_0\|^2-\|x_0-p-(y_0-q)\|^2,
\]
which is the desired inequality.
\end{proof}

\subsection{Convexity along generalized geodesics and weak nonexpansivity}\label{subsec:weak-ggeo}

In what follows we observe that if the functional is also $\lambda$-convex along generalized geodesics, then the infimum in Theorem \ref{thm:weak_ne} can be done on a bigger set of plans, as explained in Remark \ref{rem:smaller-inf}.

\begin{theorem}\label{thm:gg_wne}
    Let $\cF:\P_2(\R^d)\to (-\infty,+\infty]$ be proper, lower semicontinuous, and $\lambda$-convex along generalized geodesics. Let $\mu,\nu \in \P_2(\R^d)$ and  $\lambda \tau > -1$. Then letting $\tilde{\mu}\coloneqq\prox_{\tau \cF}^W(\mu)$ and $\tilde{\nu}\coloneqq\prox_{\tau \cF}^W(\nu)$ we have  \[(1+\lambda \tau)^2W_2^2( \tilde{\mu}, \tilde{\nu}) \leq \inf_{\gamma} \left\{ \int \|x-y\|^2 \, d \pi^{x,y}_{\#}\gamma - \int \| x-y- (1+\lambda \tau) (\tilde{x}-\tilde{y})\|^2 \,d \gamma \right\}\]
    where the infimum is among all $3$-geodesic plans $\gamma\in \Gamma(\tilde{\mu},\mu,\tilde{\nu},\nu)$ with intermediate points $\mu$ and $\tilde{\nu}$ such that %by choosing as coordinates $(\tilde x, x, \tilde y, y)$, one has that 
    $\cF$ is $\lambda$-convex along  $t\mapsto (\pi^{\tilde x\to\tilde y}_t)_{\#}\gamma$, i.e. \eqref{eq:convgg} holds true.
\end{theorem}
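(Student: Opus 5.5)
The plan is to repeat the first-variation argument of Theorem~\ref{thm:weak_ne}. The one change is that the comparison curve between $\tilde\mu$ and $\tilde\nu$ is a generalized geodesic with base $\mu$ instead of a geodesic. I use coordinates $(\tilde x, x, \tilde y, y)$ on $(\R^d)^4$. Fix a $3$-geodesic plan $\gamma\in\Gamma(\tilde\mu,\mu,\tilde\nu,\nu)$, so that $\pi^{\tilde x,x}_\#\gamma\in\Gamma_o(\tilde\mu,\mu)$, $\pi^{x,\tilde y}_\#\gamma\in\Gamma_o(\mu,\tilde\nu)$ and $\pi^{\tilde y,y}_\#\gamma\in\Gamma_o(\tilde\nu,\nu)$.

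\textbf{The comparison curve.} Since $\pi^{x,\tilde x}_\#\gamma$ and $\pi^{x,\tilde y}_\#\gamma$ are both optimal, the curve $\mu_t:=(\pi^{\tilde x\to\tilde y}_t)_\#\gamma$ is a generalized geodesic from $\tilde\mu$ to $\tilde\nu$ with base $\mu$. Its associated coupling is $\pi^{\tilde x,\tilde y}_\#\gamma$. Put $C:=\int\|\tilde x-\tilde y\|^2\,d\gamma\ge W_2^2(\tilde\mu,\tilde\nu)$. The set of admissible $\gamma$ is nonempty: choose optimal plans $\eta_1\in\Gamma_o(\tilde\mu,\mu)$, $\eta_2\in\Gamma_o(\mu,\tilde\nu)$ and $\eta_3\in\Gamma_o(\tilde\nu,\nu)$, with $\eta_2$ a coupling in $\Gamma_o(\mu,\tilde\nu)$ for which convexity along outer generalized geodesics with base $\mu$ supplies \eqref{eq:convgg}. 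Gluing $\eta_1,\eta_2,\eta_3$ along the shared marginals $\mu$ and $\tilde\nu$ gives such a $\gamma$.

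\textbf{Minimality and test plans.} The minimality of $\tilde\mu$ is tested against $\mu_t$, and that of $\tilde\nu$ against $\mu_{1-t}$. Using \eqref{eq:convgg} with the term $-\frac\lambda2 t(1-t)C$ and summing the two inequalities gives
\[
W_2^2(\tilde\mu,\mu)+W_2^2(\tilde\nu,\nu)+2\lambda\tau t(1-t)C\le W_2^2(\mu_t,\mu)+W_2^2(\mu_{1-t},\nu).
\]
I then bound the right-hand side with the test plans $(\pi^x,\pi^{\tilde x\to\tilde y}_t)_\#\gamma$ and $(\pi^{\tilde x\to\tilde y}_{1-t},\pi^y)_\#\gamma$. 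Because $(\tilde x,x)$ and $(\tilde y,y)$ are optimally coupled,
\[
W_2^2(\mu,\mu_t)\le W_2^2(\mu,\tilde\mu)+2t\int\langle x-\tilde x,\tilde x-\tilde y\rangle\,d\gamma+t^2C,
\]
and symmetrically for $W_2^2(\mu_{1-t},\nu)$. Here the $t^2$ term is $C$ rather than $W_2^2(\tilde\mu,\tilde\nu)$. This is harmless, because it vanishes after dividing by $t$ and letting $t\to0$. In the limit,
\[
\int\langle \tilde x-\tilde y,\,x-y\rangle\,d\gamma\ \ge\ (1+\lambda\tau)\,C .
\]

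\textbf{Conclusion and main obstacle.} Multiply by $2(1+\lambda\tau)>0$ and use $2c\langle v,w\rangle=\|v\|^2+c^2\|w\|^2-\|v-cw\|^2$ with $v=x-y$, $w=\tilde x-\tilde y$, $c=1+\lambda\tau$. This gives
\[
\int\|x-y\|^2\,d\gamma-\int\|x-y-(1+\lambda\tau)(\tilde x-\tilde y)\|^2\,d\gamma\ \ge\ (1+\lambda\tau)^2C\ \ge\ (1+\lambda\tau)^2W_2^2(\tilde\mu,\tilde\nu).
\]
Taking the infimum over $\gamma$ concludes.

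The main point needing care is that $\pi^{\tilde x,\tilde y}_\#\gamma$ is no longer optimal. One has to check that every place where the proof of Theorem~\ref{thm:weak_ne} used $W_2^2(\tilde\mu,\tilde\nu)$ can instead use $C$: the $\lambda$-term coming from \eqref{eq:convgg}, the $O(t^2)$ remainders, and the final bound. In each case the inequalities go in the right direction, including when $\lambda<0$, because the final estimate only needs $C\ge W_2^2(\tilde\mu,\tilde\nu)$.
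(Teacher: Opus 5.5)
Your proposal is correct and is essentially the paper's own proof. It uses the same first-variation argument along the generalized geodesic $(\pi^{\tilde x\to\tilde y}_t)_\#\gamma$ with base $\mu$, the same test plans, keeps $C=\int\|\tilde x-\tilde y\|^2\,d\gamma$ throughout, and finishes with the same polarization identity.

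One minor aside concerns nonemptiness, which the statement does not actually need. Gluing $\eta_1$ and $\eta_2$ separately along $\mu$ need not reproduce the particular generalized geodesic along which the hypothesis guarantees convexity, since that curve is determined by a full $3$-plan in $\Gamma(\mu,\tilde\mu,\tilde\nu)$ and not by $\eta_2$ alone. You should instead glue that $3$-plan with $\eta_3$ along $\tilde\nu$, as the paper's remark does.
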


\begin{remark}
The set in which we are taking the infimum is always nonempty. We can indeed take $\gamma\in \cP((\R^d)^3)$  such that, with coordinates $(x,\tilde x, \tilde y)$,  $t\mapsto (\pi_t)_{\sharp}(\pi^{\tilde x\to \tilde y}_t)_{\sharp}\gamma$ is a generalized geodesic between $\tilde \mu$ and $\tilde \nu$ with base $\mu$ and glue it with an $\eta_{\nu}\in\Gamma_o(\tilde \nu,\nu)$ through $\tilde \nu$.
\end{remark}
\begin{remark}
In Theorem \ref{thm:gg_wne} we can analogously take any $ \gamma\in \cP((\R^d)^4)$ such that $\pi_\#^{x,\tilde x} \gamma \in \Gamma_o(\mu,\tilde \mu)$  and $t\mapsto (\pi^{\tilde x\to\tilde y}_t)_{\#}\gamma$ is a generalized geodesics between $\tilde \mu$ and $\tilde \nu$ with base $\nu$, along which $\cF$ is convex.
\end{remark}
\begin{remark}\label{rem:smaller-inf}
 If $\cF:\P_2(\R^d)\to (-\infty,+\infty]$ satisfies the hypotheses of Theorem \ref{thm:gg_wne} given $\mu,\nu \in \P_2(\R^d)$ and $\tilde \mu\coloneqq\prox_{\tau \cF}^W(\mu)$, $\tilde \nu\coloneqq \prox_{\tau \cF}^W(\nu)$. Then as upper bound for  $W_2^2(\tilde \mu, \tilde \nu)$ we can take the infimum  among the upper bound given by Theorem \ref{thm:weak_ne} and the upper bound given by Theorem \ref{thm:gg_wne}. Notice moreover that this quantity can also be bounded by the so called $\tilde \nu$-based Wasserstein (semi-)distance \cite[Definition 3]{NennaPass2023, Tanguy2026}, also indicated as $W_{\tilde{\nu}}(\mu,\nu)$, which is strictly related to the quantity introduced in \cite[Equation (9.2.5)]{AGS08}.
\end{remark}

\begin{proof}
Let $\gamma$ be a $3$-geodesic plan $\gamma\in \Gamma(\tilde\mu,\mu,\tilde\nu,\nu)$ with intermediate points $\mu$ and $\tilde\nu$ such that by chosing as coordinates $(\tilde x, x, \tilde y, y)$, one has that $\cF$ satisfies the $\lambda$-convexity inequality \eqref{eq:convgg} along  $t\mapsto \mu_t\coloneqq(\pi^{\tilde x\to\tilde y}_t)_{\#}\gamma$.
We are in the following situation.
    \begin{center}
\begin{tikzpicture}[
  node distance=2cm,
  every node/.style={font=\normalsize}
]
  % Corner measures (no boxes)
  \node (mu) {$\mu$};
  \node (nu) [right=of mu] {$\nu$};
  \node (tmu)  [below=of mu] {$\tilde \mu$};
  \node (tnu)  [below=of nu] {$\tilde \nu$};

  % Center gamma (exact center of the square)
  \node (bgam) at (0.75,-1.75) {};
  \node (bgam_p) at (1.75,-0.75) {};

  % Square edges with labels (no dashed lines)
  \draw (mu) -- node[above] {$\Gamma$} (nu);

  \draw (mu) -- node[above] {$\Gamma_o$} (tnu);

  \draw (tmu)  -- node[left]  {$\Gamma_o$} (mu);

  \draw (tnu)  -- node[right] {$\Gamma_o$} (nu);

  \draw (tmu)  -- node[below] {$ \Gamma$} (tnu);
\end{tikzpicture}
\end{center}
We proceed as in the proof of Theorem \ref{thm:weak_ne}. 
\begin{comment}
We have
\begin{equation}\label{eq:gg_estimate}
    \begin{aligned}
    \int \|x-y\|^2 \, d \pi_\#^{x,y}\gamma &= \int \|\tilde x - \tilde y + x -\tilde x - (y-\tilde y)\|^2 \, d\gamma\\
    & \hspace{-1cm} = \int \|\tilde x-\tilde y\|^2 \, d\gamma + 2\int \langle \tilde x - \tilde y, x-\tilde x - (y -\tilde y)\rangle \, d\gamma + \int \| x - \tilde x - (y-\tilde y)\|^2 \, d\gamma.
\end{aligned}\end{equation}
\end{comment}
By exploiting the convexity along  $t\mapsto \mu_t$ and the minimality of $\tilde\mu$ as in \eqref{eq:proxmin}, we have  
\begin{equation*}
    \begin{aligned}
       & \cF(\tilde\mu)+\frac{1}{2\tau} W_2^2(\tilde\mu, \mu)   \\
        & \leq (1-t) \cF(\tilde\mu) + t\cF(\tilde\nu) - \frac{\lambda}{2} t (1-t) \int \|\tilde x - \tilde y\|^2 \, d\gamma + \frac{1}{2\tau} W_2^2(\mu_t, \mu), 
         \end{aligned}
    \end{equation*}
and analogously
\begin{align*}
&\cF(\tilde\nu)+\frac{1}{2\tau} W_2^2(\tilde\nu, \nu)\\ &
\leq  t\cF(\tilde\mu) + (1-t) \cF(\tilde\nu) - \frac{\lambda}{2} (1-t) t \int \|\tilde x - \tilde y\|^2 \, d\gamma + \frac{1}{2\tau} W_2^2( \mu_{1-t}, \nu).
\end{align*}
Combining the two inequalities, we obtain
\[W_2^2(\tilde\mu, \mu) + W_2^2(\tilde\nu, \nu) + 2\lambda \tau t(1-t)\int \|\tilde x-\tilde y\|^2 \, d\gamma \leq W_2^2( \mu_t, \mu) + W_2^2(\mu_{1-t}, \nu).\]
 Therefore reasoning as in \eqref{eq:nonopt}, one has 
\begin{align*} W_2^2(\mu,\mu_t)  \leq 
 W_2^2(\mu,\tilde \mu) + 2 t \int \langle x-\tilde x, \tilde x - \tilde y\rangle \, d\gamma + t^2 \int \| \tilde x- \tilde y \|^2 \, d \gamma , \\
 W_2^2(\nu,\mu_{1-t}) \leq  W_2^2(\nu,\tilde \nu) + 2 t \int \langle y-\tilde y, \tilde y - \tilde x\rangle \, d\gamma + t^2 \int \| \tilde x- \tilde y \|^2 \, d \gamma . 
\end{align*}
Combining all the above and dividing by $t$, we obtain 
\[(1-t) 2\lambda \tau \int \|\tilde x -\tilde y\|^2 \, d\gamma -2t \int \| \tilde x- \tilde y \|^2 \, d \gamma \leq 2\int \langle \tilde x - \tilde y, x-\tilde x - (y -\tilde y)\rangle \, d\gamma, \]
and letting $t \to 0$ and rearranging we get
\[2\int \langle \tilde x - \tilde y, x- y \rangle \, d\gamma \geq 2(1+\lambda \tau) \int \|\tilde x -\tilde y\|^2 \, d\gamma.\]
Multiplying by $1+\lambda \tau$ and using that $2c < v,w>= \|v\|^2 + c^2 \|w\|^2 - \|v-c w\|^2$ we finally get
\[\int \|x-y\|^2  \, d \gamma  + (1+\lambda \tau)^2 \int \| \tilde{x} - \tilde{y}\|^2 \, d \gamma - \int \| x-y - (1+\lambda \tau)(\tilde{x} - \tilde{y}) \|^2 \, d \gamma \geq 2(1+\lambda \tau)^2 \int \|\tilde x -\tilde y\|^2 \, d\gamma,\]
thus we conclude.

\begin{comment}
So
\[\begin{aligned}
    \int \|x-y\|^2 \, d\pi_\#^{x,y }\gamma  & = \int \|\tilde x-\tilde y\|^2 \, d\gamma + 2\int \langle \tilde x - \tilde y, x-\tilde x - (y -\tilde y)\rangle \, d\gamma + \int \| x - \tilde x - (y-\tilde y)\|^2 \, d\gamma\\
    & \geq (1+2\lambda \tau) \int \|\tilde x-\tilde y\|^2 \, d\gamma + \int \| x - \tilde x - (y-\tilde y)\|^2 \, d\gamma\\
    & \geq (1+2\lambda \tau) W_2^2(\tilde \mu, \tilde \nu)+ \int \| x - \tilde x - (y-\tilde y)\|^2 \, d\gamma.
\end{aligned}\]
\end{comment}
\end{proof}
\begin{remark}
    Notice that in the proof we actually provide estimates on the quantity 
    \[(1+\lambda \tau)^2 \int \|\tilde x-\tilde y\|^2 \, d\gamma + \int \| x - y - (1+\lambda \tau) ( \tilde x -\tilde y)\|^2 \, d\gamma,\]
    and  in general  $\int \|\tilde x-\tilde y\|^2 \, d\gamma$ is greater than $ W_2^2(\tilde \mu, \tilde \nu)$.
\end{remark}
Notice that in the previous theorem, we do not use the fact that $\prox_{\tau \cF}^W(\mu)$ is a singleton. In particular, the same statement holds by taking any $\tilde\mu\in \prox_{\tau \cF}^W(\mu)$ and any $\tilde\nu\in \prox_{\tau \cF}^W(\nu)$.  It is worth to observe that 
the fact that $\prox_{\tau \cF}^W(\nu)$ is a singleton, can be seen as a consequence of the Theorem. In particular given 
  $\cF:\P_2(\R^d)\to (-\infty,+\infty]$ $\lambda$-convex along generalized geodesics, $\mu\in \P_2(\R^d)$, $\lambda \tau > -1$, and   $ \tilde\mu, \tilde\nu \in \prox_{\tau \cF}^W(\mu)$, one can take $\gamma\in \Gamma(\tilde\mu,\mu,\tilde \nu,\mu)$ such that $\pi^{x,y}_{\sharp}\gamma=(Id,Id)_{\sharp}\mu$ and $\cF$ satisfies the $\lambda$-convexity inequality  along $(\mu_{t})_{t\in [0,1]}$ with $\mu_t\coloneqq (\pi^{\tilde x,\tilde y}_t)_{\#}\gamma$. This gives 
\begin{align*}
(1+\lambda \tau)^2W_2^2( \tilde{\mu}, \tilde{\nu}) \leq  \int \|x-y\|^2 \, d \pi^{x,y}_{\#}\gamma =0
\end{align*}
which gives $\tilde \mu= \tilde \nu$. 

\subsection{Convexity along 2-base generalized geodesics and non-expansivity}\label{subsec:two base}
In this section, we introduce a new class of curves, called 2-base generalized geodesics. In a sense, this framework generalizes the notion of generalized geodesics by building a transport between $\mu$ and $\nu$ using two base measures, $\tilde\mu$ and $\tilde\nu$:  the standard definition of generalized geodesics,  can be recovered as a special case when the two bases coincide. {However, notice that the one of the standard interpretation of generalized geodesics is lost: one can no longer view the interpolation as a convex combination of optimal maps in the tangent space of a reference measure.}

 Furthermore, we introduce a corresponding notion of convexity along these curves. We then show that this notion of convexity is sufficient to ensure that the proximal operator is non expansive.

By definition, the notion of convexity along 2-base generalized geodesics, sits in between the total convexity and the convexity along generalized geodesics. While in Example~\ref{ex:3geodesic-cvx} we prove that the classical internal energy are not convex along 2-base generalized geodesics, we do not know whether it is a strictly looser notion than total convexity.

\begin{definition}[2-base generalized geodesics]
    Given $\mu, \nu\in \P_2(\R^d)$ we say that $t\mapsto \mu_t$ is a \emph{2-base generalized geodesic} between $\mu$ and $\nu$ with bases $\tilde\mu,\tilde \nu$ if it is an interpolation between $\mu$ and $\nu$ induced by a 3-geodesic plan, that is there exists a 3-geodesic plan $\gamma\in \Gamma(\mu,\tilde \mu,\tilde \nu,\nu)$ with intermediate points $\tilde \mu$ and $\tilde \nu$ such that $\mu_t\coloneqq (\pi^{x\to y}_t)_{\#} \gamma$. 
 \end{definition}

\begin{definition}[Convexity along 2-base generalized geodesics]
    We say that $\cF:\P_2(\R^d) \to (-\infty,+\infty]$ is convex along 2-base generalized geodesics if for any $ \mu,\nu, \tilde \mu,\tilde \nu$ there exists a 2-base generalized geodesic $t\mapsto \mu_t$ between $\mu$ and $\nu$ with bases $\tilde\mu$ and $\tilde \nu$ such that $\cF$ is convex along $\mu_t$.
\end{definition}

\begin{remark}
  Functionals that are convex along 2-base generalized geodesics are also convex along generalized geodesics (it is enough to choose $\nu_0=\nu_1$ in the definition).
\end{remark}
\begin{remark}
For the sake of completeness, we observe that, analogously to Definition \ref{def:3geo}, given $\mu_0,\mu_k$ and $\mu_1,\dots, \mu_{k-1}$ in $\cP_2(\R^d)$, one can say that a plan $\gamma\in \cP((\R^d)^{k+1})$, $\gamma\in\Gamma(\mu_0,\mu_1,\dots,\mu_{k-1},\mu_k)$, is  a \emph{$k$-geodesic plan} with intermediate points $\mu_1,\dots, \mu_{k-1}$ if by denoting the components of $(\R^{d})^{k+1}$ as $(x_0,\dots, x_k)$, one has $\pi^{i,i+1}_{\sharp}\gamma\in \Gamma_{o}(\mu_i,\mu_{i+1})$ for any $i=0, \dots, k-1$. Analogously one can say that $(\mu_{t})_{t\in[0,1]}$ is a \emph{$(k-1)$-base generalized geodesics} between $\mu_0$ and $\mu_k$ with bases $\mu_1,\dots,\mu_{k-1}$ if $\mu_t\coloneqq (\pi^{0 \to k}_{t})_{\#}\gamma$ with $\gamma$ a $k$-geodesic plan with intermediate points $\mu_1,\dots,\mu_{k-1}$. The definition of convexity along $(k-1)$-base generalized geodesics can be given accordingly. We conclude the remark by noticing that for $k=2$ one recovers the classical definition of generalized geodesics and the related convexity notion, while the degenerate case $k=1$ is the standard geodesic convexity.
\end{remark}
\begin{theorem}[Non-expansivity  for functions convex along 2-base generalized geodesics]\label{thm:tbgg_ne}
    Let $\cF:\P_2(\R^d)\to (-\infty,+\infty]$ be proper,lower semicontinuous and $\lambda$-convex along 2-base generalized geodesics, let $\tau>0$ such that $\lambda \tau >-1$, $\mu,\nu \in \P_2(\R^d)$ and $\tilde \mu \in \prox_{\tau \cF}^W(\mu)$, $\tilde \nu \in \prox_{\tau \cF}^W(\nu)$. Then 
    \[(1+\lambda \tau) W_2(\tilde \mu, \tilde \nu) \leq W_2(\mu,\nu).\]
\end{theorem}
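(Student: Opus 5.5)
The plan is to repeat the proof of Theorem~\ref{thm:weak_ne} (equivalently Theorem~\ref{thm:gg_wne}), but with a different ordering of the four measures, so that the ``outer'' link of the $3$-geodesic plan becomes the optimal coupling between $\mu$ and $\nu$. Concretely, I apply the convexity assumption to the quadruple in which $\tilde\mu,\tilde\nu$ are the \emph{endpoints} and $\mu,\nu$ are the \emph{bases}. By the definition of convexity along 2-base generalized geodesics (in its $\lambda$-version, with the quadratic term computed on the associated coupling as in \eqref{eq:convgg}), there is a $3$-geodesic plan $\gamma\in\Gamma(\tilde\mu,\mu,\nu,\tilde\nu)$, with coordinates $(\tilde x,x,y,\tilde y)$, such that:
\[
\pi^{\tilde x,x}_\#\gamma\in\Gamma_o(\tilde\mu,\mu),\qquad \pi^{x,y}_\#\gamma\in\Gamma_o(\mu,\nu),\qquad \pi^{y,\tilde y}_\#\gamma\in\Gamma_o(\nu,\tilde\nu),
\]
and $\cF$ satisfies the $\lambda$-convexity inequality along $\mu_t:=(\pi_t^{\tilde x\to\tilde y})_\#\gamma$ with the term $\int\|\tilde x-\tilde y\|^2\,d\gamma$. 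If the definition is only available for $\lambda=0$, I will use \eqref{eqn:equiv_lambdaconv} and the fact that $M_2$ is totally $2$-convex to obtain this $\lambda$-version.

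Next I copy the first-order argument from the proof of Theorem~\ref{thm:weak_ne}.
\begin{itemize}
\item By minimality of $\tilde\mu$, compare $\cF(\tilde\mu)+\frac1{2\tau}W_2^2(\tilde\mu,\mu)$ with the value at $\mu_t$. By minimality of $\tilde\nu$, compare $\cF(\tilde\nu)+\frac1{2\tau}W_2^2(\tilde\nu,\nu)$ with the value at $\mu_{1-t}$. Summing these and using convexity cancels the $\cF$ terms and leaves a $2\lambda\tau t(1-t)\int\|\tilde x-\tilde y\|^2\,d\gamma$ term.
\item Bound $W_2^2(\mu,\mu_t)$ from above using the plan $(x,(1-t)\tilde x+t\tilde y)_\#\gamma$. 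Since $\pi^{\tilde x,x}_\#\gamma$ is optimal, this gives $W_2^2(\mu,\tilde\mu)+2t\int\langle \tilde x-x,\tilde y-\tilde x\rangle\,d\gamma+t^2\int\|\tilde x-\tilde y\|^2\,d\gamma$. Bound $W_2^2(\nu,\mu_{1-t})$ symmetrically, using optimality of $\pi^{y,\tilde y}_\#\gamma$.
\item Divide by $t$ and let $t\to0$. This yields
\[
\int\langle \tilde x-\tilde y,x-y\rangle\,d\gamma\ \ge\ (1+\lambda\tau)\int\|\tilde x-\tilde y\|^2\,d\gamma .
\]
\end{itemize}
Only optimality of the two ``vertical'' links is used in this step; optimality of the $\mu$--$\nu$ link is not needed here.

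To conclude, I use the polarization identity exactly as before, which gives
\[
(1+\lambda\tau)^2\int\|\tilde x-\tilde y\|^2\,d\gamma\le\int\|x-y\|^2\,d\gamma-\int\|x-y-(1+\lambda\tau)(\tilde x-\tilde y)\|^2\,d\gamma\le \int\|x-y\|^2\,d\gamma .
\]
Now $\pi^{x,y}_\#\gamma\in\Gamma_o(\mu,\nu)$, so the right-hand side equals $W_2^2(\mu,\nu)$. On the left, $\pi^{\tilde x,\tilde y}_\#\gamma\in\Gamma(\tilde\mu,\tilde\nu)$, so the integral is at least $W_2^2(\tilde\mu,\tilde\nu)$. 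Taking square roots gives $(1+\lambda\tau)W_2(\tilde\mu,\tilde\nu)\le W_2(\mu,\nu)$; alternatively one can simply Cauchy--Schwarz the inner-product inequality.

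The main obstacle I expect is bookkeeping rather than analysis. First, the roles of endpoints and bases must be read correctly from the definition, which is stated with $\mu,\nu$ as endpoints, while here it must be applied with the proxes as endpoints. Second, the $\lambda$-term must be the coupling-dependent quantity $\int\|\tilde x-\tilde y\|^2d\gamma$ and not $W_2^2(\tilde\mu,\tilde\nu)$; otherwise the signs fail when $\lambda<0$. If the latter issue arises, I would first prove the case $\lambda=0$ and then reduce the general case via \eqref{eqn:equiv_lambdaconv}, absorbing $\frac\lambda2M_2$ into the quadratic penalty.
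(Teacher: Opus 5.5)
Your proposal is correct and is essentially the paper's proof. The paper also applies the 2-base convexity with $\tilde\mu,\tilde\nu$ as endpoints and $\mu,\nu$ as bases, so that $\pi^{x,y}_\#\gamma\in\Gamma_o(\mu,\nu)$; it then runs the first-order comparison from the proof of Theorem~\ref{thm:weak_ne} to get $\int\langle \tilde x-\tilde y, x-y\rangle\,d\gamma\ge(1+\lambda\tau)\int\|\tilde x-\tilde y\|^2\,d\gamma$, and concludes via $\int\|\tilde x-\tilde y\|^2\,d\gamma\ge W_2^2(\tilde\mu,\tilde\nu)$.

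Your version is slightly more careful than the written one in two respects. You treat general $\lambda$ explicitly, whereas the paper only writes out $\lambda=0$. You also keep the coupling integral $t^2\int\|\tilde x-\tilde y\|^2\,d\gamma$ where the paper writes $t^2W_2^2(\tilde\mu,\tilde\nu)$; this is harmless there, since that term vanishes as $t\to0$.
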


\begin{proof} We consider only the case $\lambda=0$, the general case being similar to the proofs of Theorem~\ref{thm:weak_ne} and Theorem~\ref{thm:gg_wne}. By convexity along 2-base generalized geodesics, we know there exists a plan $\gamma$  on $(\R^d)^4$, with coordinates $(x, y, \tilde x, \tilde y)$, such that
    \[\pi_\#^{(x,\tilde x)}\gamma=\eta_\mu  \quad \pi_\#^{(y,\tilde y)}\gamma=\eta_\nu, \quad \text{and} \quad \pi_\#^{(\tilde x,\tilde y)}\gamma=\tilde \eta,\]
    with $\tilde \eta\in \Gamma(\tilde \mu, \tilde \nu)$, $\eta_\mu\in \Gamma_o(\mu, \tilde \mu)$ and $\eta_\nu\in \Gamma_o(\nu, \tilde \nu)$ and $\eta:= \pi_\#^{(x,y)}\gamma \in \Gamma_{o}(\mu,\nu)$, and such that $\cF$ is convex along $\tilde \mu_t := (\pi_t)_{\#}\gamma$, with $\pi_t(x, y, \tilde x, \tilde y)= (1-t) \tilde x + t \tilde y$, $t\in (0,1)$. We are in the following situation.
    \begin{center}
\begin{tikzpicture}[
  node distance=1.5cm,
  every node/.style={font=\normalsize}
]
  % Corner measures (no boxes)
  \node (mu) {$\mu$};
  \node (nu) [right=of mu] {$\nu$};
  \node (tmu)  [below=of mu] {$\tilde \mu$};
  \node (tnu)  [below=of nu] {$\tilde \nu$};

  % Center gamma (exact center of the square)
  \node (gam) at (1,-1) {$\gamma$};

  % Square edges with labels (no dashed lines)
  \draw (mu) -- node[above] {$\eta \in \Gamma_o$} (nu);

  \draw (tmu)  -- node[left]  {$\eta_\mu \in \Gamma_o$} (mu);

  \draw (tnu)  -- node[right] {$\eta_\nu \in \Gamma_o$} (nu);

  \draw (tmu)  -- node[below] {$\tilde \eta \in \Gamma$} (tnu);
\end{tikzpicture}
\end{center}
We have
\[\begin{aligned}
    W_2^2(\mu,\nu)& =\int \|x-y\|^2 \, d\eta = \int \|\tilde x - \tilde y + x -\tilde x - (y-\tilde y)\|^2 \, d\gamma\\
    & = \int \|\tilde x-\tilde y\|^2 \, d\gamma + 2\int \langle \tilde x - \tilde y, x-\tilde x - (y -\tilde y)\rangle \, d\gamma + \int \| x - \tilde x - (y-\tilde y)\|^2 \, d\gamma\\
    & \geq W_2^2(\tilde \mu, \tilde \nu)+ 2\int \langle \tilde x - \tilde y, x-\tilde x - (y -\tilde y)\rangle \, d\gamma + \int \| x - \tilde x - (y-\tilde y)\|^2 \, d\gamma
\end{aligned}\]
It suffices to show that $\int \langle \tilde x - \tilde y, x-\tilde x - (y -\tilde y)\rangle \, d\gamma\geq 0$. By convexity along $\tilde \mu_t$
\begin{equation*}
    \cF(\tilde \mu)+\frac{1}{2\tau} W_2^2(\tilde \mu, \mu)\leq \cF(\tilde \mu_t)+\frac{1}{2\tau} W_2^2(\tilde \mu_t, \mu) \leq (1-t) \cF(\tilde \mu) + t\cF(\tilde \nu) + \frac{1}{2\tau} W_2^2(\tilde \mu_t, \mu).
    \end{equation*}
We also have
\begin{equation*}\cF(\tilde \nu)+\frac{1}{2\tau} W_2^2(\tilde \nu, \nu)\leq \cF(\tilde \mu_{(1-t)})+\frac{1}{2\tau} W_2^2(\tilde \mu_{(1-t)}, \nu) \leq  t\cF(\tilde \mu) + (1-t) \cF(\tilde \nu) + \frac{1}{2\tau} W_2^2(\tilde \mu_{(1-t)}, \nu).\end{equation*}
Combining the two inequalities, we obtain
\[W_2^2(\tilde \mu, \mu) + W_2^2(\tilde \nu, \nu) \leq W_2^2(\tilde \mu_t, \mu) + W_2^2(\tilde \mu_{(1-t)}, \nu).\]
On the other hand, let $\tilde \gamma := (\pi^{x}, \pi^{\tilde x}, \pi_t)_\# \gamma\in \P_2(\R^d\times \R^d \times \R^d)$ and notice that $\pi_\#^{(x,z_t)} \tilde \gamma\in \Gamma(\mu, \tilde \mu_t)$ needs not to be optimal. Therefore, 
\[\begin{aligned} W_2^2(\mu,\tilde \mu_t) & \leq \int \|x-z_t\|^2 \, d\tilde \gamma(x,\tilde x, z_t) = \int \|x- \tilde x + t (\tilde x - \tilde y)\|^2 \, d\gamma(x,y, \tilde x, \tilde y)\\
& = \int \|x-\tilde x\|^2 \, d\gamma + 2 t \int \langle x-\tilde x, \tilde x - \tilde y\rangle \, d\gamma + t^2 \int \|\tilde x -\tilde y \|^2 \, d\gamma\\
&= W_2^2(\mu,\tilde \mu) + 2 t \int \langle x-\tilde x, \tilde x - \tilde y\rangle \, d\gamma + t^2 W_2^2(\tilde \mu, \tilde \nu).
\end{aligned}\]
Similarly, we define $\tilde \gamma' := (\pi^{y}, \pi^{\tilde y}, \pi_{(1-t)})_\# \gamma\in \P_2(\R^d\times \R^d \times \R^d)$, and obtain
\[\begin{aligned} W_2^2(\nu,\tilde \mu_{(1-t)}) & \leq \int \|y-z_{(1-t)}\|^2 \, d\tilde \gamma'(y,\tilde y, z_{(1-t)}) = \int \|y- \tilde y + t(\tilde y - \tilde x)\|^2 \, d\gamma(x,y, \tilde x, \tilde y)\\
& = \int \|y-\tilde y\|^2 \, d\gamma + 2 t \int \langle y-\tilde y, \tilde y - \tilde x\rangle \, d\gamma + t^2 \int \|\tilde y -\tilde x \|^2 \, d\gamma\\
&= W_2^2(\nu,\tilde \nu) + 2 t \int \langle y-\tilde y, \tilde y - \tilde x\rangle \, d\gamma + t^2 W_2^2(\tilde \mu, \tilde \nu).
\end{aligned}\]
Combining all the above and dividing by $t$, we obtain 
\[-2t W_2^2(\tilde \mu, \tilde \nu) \leq 2\int \langle \tilde x - \tilde y, x-\tilde x - (y -\tilde y)\rangle \, d\gamma, \]
and letting $t \to 0$ we obtain the desired result.
\end{proof}

%{\color{blue}\paragraph{How large is this class of functions? Examples.}
%Our goal now is to show that
%\[\text{totally cvx} \subsetneq \text{cvx along 2-base generalized geodesics} \subsetneq \text{cvx along generalized geodesics}.\]}
%To prove the last strict inclusion we use the following example.
The notion of convexity along 2-base generalized geodesics is strictly stronger than convexity along generalized geodesics, as proven in the following Example.
\begin{example}\label{ex:3geodesic-cvx}
Let \(\mu_1=\mathcal N(0,I_2)\) on \(\mathbb R^2\), and define the linear maps $L_i:x\mapsto S_i x$, $i=1,\dots, 3$, with associated matrices
\[
S_1=
\begin{pmatrix}
1&-1\\
-1&2
\end{pmatrix},
\qquad
S_2=
\begin{pmatrix}
1&2\\
2&5
\end{pmatrix},
\qquad
S_3=
\begin{pmatrix}
10&-3\\
-3&1
\end{pmatrix}.
\]
Each \(S_i\) is symmetric positive definite, since $\det S_1=\det S_2=\det S_3=1$ and the upper-left entries are positive. Define
\[
\mu_2=(L_1)_\#\mu_1,
\qquad
\mu_3=(L_2)_\#\mu_2,
\qquad
\mu_4=(L_3)_\#\mu_3.
\]
each linear map $L_i:x\mapsto S_i x$ is the gradient of the convex quadratic potential $\varphi_i(x)=\frac12 x^T S_i x$, then \(S_1\) is optimal from \(\mu_1\) to \(\mu_2\), \(S_2\) is optimal from \(\mu_2\) to \(\mu_3\), and \(S_3\) is optimal from \(\mu_3\) to \(\mu_4\). The composition of the  maps $T=L_3\circ L_2\circ L_1$ is associated to the matrix 
\[
S = S_3S_2S_1 = \begin{pmatrix} 
-1&6\\
0&-1
\end{pmatrix}.
\]
Now consider the interpolation
\[
T_t=(1-t)I+tT.
\]
At \(t=\frac12\), we obtain that $T_{1/2}$ is associated to 
\[
\frac12(I+S)
=
\frac12
\begin{pmatrix}
0&6\\
0&0
\end{pmatrix}
=
\begin{pmatrix}
0&3\\
0&0
\end{pmatrix},
\]
which has rank one. Consequently, $(T_{1/2})_\#\mu_1$
is supported on the line \(\mathbb R\times\{0\}\), and hence it is not absolutely continuous with respect to the Lebesgue measure on \(\mathbb R^2\). In particular we have
\[
\operatorname{Ent}\bigl((T_{1/2})_\#\mu_1\bigr)=+\infty.
\]
On the other hand, both \(\mu_1\) and \(\mu_4=T_\#\mu_1\) are nondegenerate Gaussian measures, hence they have finite entropy. Therefore the entropy is not convex along the curve $t\longmapsto (T_t)_\#\mu_1$. A similar example shows that every internal energy is not convex along $(T_t)_{\sharp} \mu$ for some $\mu \in \P_2(\R^d)$.
\end{example}

Although it remains unclear whether convexity along 2-base generalized geodesics is strictly weaker than the notion of total convexity, we make the following observations. One could define a notion of convexity along 3-base generalized geodesics.
Following this approach, we observe that any plan induced by a linear transformation $T$, provided its associated matrix has positive determinant and is not a negative scalar multiple of the identity,  can be decomposed into the composition of three positive semidefinite linear maps, by virtue of \cite{Ba}. In particular, this implies that for a fixed plan $\pi = (I, T)_{\#}\mu$ with such a $T$, we can write $T = S_3 S_2 S_1$. By selecting $(S_1)_{\#}\mu$, $(S_2 S_1)_{\#}\mu$, and $(S_3 S_2 S_1)_{\#}\mu$ as bases, we can define the plan $\gamma \in \mathcal{P}((\mathbb{R}^d)^4)$ as $\gamma = (I, S_1, S_2 S_1, S_3 S_2 S_1)_{\#}\mu$. It immediately follows that the marginal projection satisfies $\pi^{1,4}_{\#}\gamma = (I, T)_{\#}\mu = \pi$. Since $\gamma$ is the unique 3-base generalized geodesic associated with these bases, we conclude that any functional that is convex along 3-base generalized geodesics is also convex along couplings induced by such linear maps. By approximation we can conclude the same for plans which are of the type $(I, T)_{\#}\mu$, with $T$ linear map associated to matrix with positive determinant.  

 The same conclusion cannot be easily reached when dealing with 2-base generalized geodesics.  In  \cite{Ba}, the set of matrices which can be decomposed as the the product of $n$ positive definite symmetric matrices is characterized. In particular one can see that while the set of matrices which can be decomposed as the product of $4$ positive definite symmetric matrices is dense in the set of matrices with positive determinant, this is not true for the set matrices which can be decomposed as the product of $3$ positive definite symmetric matrices.

 For the general nonlinear case we notice that the differential of the composition of $k$ smooth optimal maps is the composition of $k$ positive definite matrices. We conjecture that the set of compositions of $k=4$ optimal maps is dense in the set of orientation preserving diffeomorphisms while for $k=3$ this is not true.
 
% \begin{remark}
% \textcolor{red} {congettura k=4 possiamo fare tutte le mappe che sono orientation preserving come congettura: inglobare questo nel remark}
% \end{remark}
\subsection{Conditional $(1+\delta)$-Lipschitz estimates}\label{subsec:almostlip}

In this section we find conditions on $\mu$ and $\nu$, possibly depending on $\cF$, under which we can have an inequality of the type

\begin{equation}\label{eqn:1delta-Lip}
(1+\lambda \tau) W_2(\prox_{\tau \cF}^W (\mu) , \prox_{\tau \cF}^W (\nu)) \leq (1+\delta )  W_2(\mu, \nu).\end{equation}
\begin{comment}
The results we find are quantitative relaxations of the cases where a very restrictive assumption on either $\mu$ or $\nu$ gives \eqref{eqn:1delta-Lip} with $\delta=0$. Relaxing this condition to $\delta>0$ but small allows us to find a larger set of $\mu$ and $\nu$ on which \eqref{eqn:1delta-Lip} holds true.
\end{comment}
 Under highly restrictive assumptions on either $\mu$ or $\nu$, \eqref{eqn:1delta-Lip} holds with $\delta=0$. We obtain a quantitative relaxation of this case. Relaxing this condition to a small $\delta>0$ allows us to establish \eqref{eqn:1delta-Lip} for a significantly larger set of measures.

We start with a Lemma which is a consequence of the weak non-expansivity.

\begin{lemma}\label{lem:w2mumu-estimate}
    Let $\cF:\P_2(\R^d) \to (-\infty, +\infty]$ be proper, lower semicontinuous and $\lambda$-convex along generalized geodesics, $\tau>0$ such that $\lambda \tau >-1$. Let $\mu, \nu \in \P_2(\R^d)$ and let $\tilde \mu := \prox_{\tau \cF}^W(\mu)$, $\tilde \nu := \prox_{\tau \cF}^W(\nu)$. Then 
    \[ (1+\lambda \tau) W_2(\tilde \mu, \tilde \nu)\leq  W_2(\mu,\nu) + 2\min \left\{W_2(\tilde \mu,\mu), W_2(\tilde \nu,\nu)\right\}.\]
    %In particular, if either $W_2(\tilde \mu, \mu) $ or $W_2(\tilde \nu, \nu) $ is less than $W_2(\mu,\nu)$, then
    %\[W_2(\tilde \mu, \tilde \nu) \leq 3 W_2(\mu,\nu).\]
\end{lemma}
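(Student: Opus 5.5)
We plan to apply the weak non-expansivity of Theorem~\ref{thm:gg_wne}, or more precisely the intermediate inequality obtained in its proof, to a well-chosen $3$-geodesic plan. The estimate then follows from Cauchy--Schwarz and the triangle inequality.

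\emph{Step 1: choice of the plan.} Fix coordinates $(\tilde x, x, \tilde y, y)$ and build $\gamma\in\Gamma(\tilde\mu,\mu,\tilde\nu,\nu)$ as in the Remark following Theorem~\ref{thm:gg_wne}. Its $(\tilde x,x,\tilde y)$-marginal induces a generalized geodesic from $\tilde\mu$ to $\tilde\nu$ with base $\mu$ along which $\cF$ is $\lambda$-convex; this exists by the outer generalized geodesic convexity. This marginal is then glued with some $\eta_\nu\in\Gamma_o(\tilde\nu,\nu)$ through $\tilde\nu$. With this construction, $\pi^{\tilde x,x}_\#\gamma\in\Gamma_o(\tilde\mu,\mu)$, $\pi^{x,\tilde y}_\#\gamma\in\Gamma_o(\mu,\tilde\nu)$ and $\pi^{\tilde y,y}_\#\gamma\in\Gamma_o(\tilde\nu,\nu)$.

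\emph{Step 2: the key inequality.} The proof of Theorem~\ref{thm:gg_wne}, before the final algebraic rewriting, yields
\[
\int\langle \tilde x-\tilde y, x-y\rangle\,d\gamma \;\geq\; (1+\lambda\tau)\int\|\tilde x-\tilde y\|^2\,d\gamma .
\]
Applying Cauchy--Schwarz in $L^2(\gamma)$ gives
\[
(1+\lambda\tau)\|\tilde x-\tilde y\|_{L^2(\gamma)}\le \|x-y\|_{L^2(\gamma)}.
\]
Since $\pi^{\tilde x,\tilde y}_\#\gamma\in\Gamma(\tilde\mu,\tilde\nu)$, the left-hand side dominates $(1+\lambda\tau)W_2(\tilde\mu,\tilde\nu)$.

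\emph{Step 3: bounding the right-hand side.} Minkowski's inequality in $L^2(\gamma)$ gives $\|x-y\|_{L^2(\gamma)}\le \|x-\tilde y\|_{L^2(\gamma)}+\|\tilde y-y\|_{L^2(\gamma)}$. By optimality of the two corresponding marginals, this sum equals $W_2(\mu,\tilde\nu)+W_2(\tilde\nu,\nu)$. The triangle inequality $W_2(\mu,\tilde\nu)\le W_2(\mu,\nu)+W_2(\nu,\tilde\nu)$ then yields
\[
(1+\lambda\tau)W_2(\tilde\mu,\tilde\nu)\le W_2(\mu,\nu)+2W_2(\tilde\nu,\nu).
\]
Exchanging the roles of $\mu$ and $\nu$, so that the generalized geodesic now has base $\nu$, gives the same bound with $W_2(\tilde\mu,\mu)$ in place of $W_2(\tilde\nu,\nu)$. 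Taking the minimum of the two bounds concludes.

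The only delicate point is Step 2. We must make sure the intermediate inequality in the proof of Theorem~\ref{thm:gg_wne} holds with $\int\|\tilde x-\tilde y\|^2d\gamma$ rather than $W_2^2(\tilde\mu,\tilde\nu)$, because $\pi^{\tilde x,\tilde y}_\#\gamma$ need not be optimal. The proof indeed tracks exactly this quantity, since the convexity inequality \eqref{eq:convgg} involves the associated coupling. This is also what makes Cauchy--Schwarz applicable, with $\gamma$ being the common measure in both factors.
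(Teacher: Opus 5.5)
Your proof is correct and follows the paper's argument. The paper applies Theorem~\ref{thm:gg_wne} to the same $3$-geodesic plan (a generalized geodesic from $\tilde\mu$ to $\tilde\nu$ with base $\mu$, glued with an optimal plan between $\tilde\nu$ and $\nu$) to obtain $(1+\lambda\tau)W_2(\tilde\mu,\tilde\nu)\le\|x-y\|_{L^2(\gamma)}$, and then uses Minkowski, the optimality of $\pi^{x,\tilde y}_\#\gamma$ and $\pi^{\tilde y,y}_\#\gamma$, the triangle inequality and symmetry exactly as you do. Your Cauchy--Schwarz step via the intermediate inequality is equivalent but unnecessary: the theorem's statement already gives this bound once you discard the nonnegative subtracted term.
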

\begin{proof}
    From Theorem~\ref{thm:gg_wne} and the triangular inequality we have that 
    \[ (1+\lambda \tau) W_2(\tilde \mu,\tilde \nu) \leq \|x-y\|_{L^2( \gamma)} \leq   \|x-\tilde y\|_{L^2(\gamma)} +  \|\tilde y -y\|_{L^2( \gamma)},\]
    for any $\gamma$ as in Theorem \ref{thm:gg_wne}.  In particular from $\pi_\#^{(x,\tilde y)} \gamma \in \Gamma_o(\mu,\tilde \nu)$ and $\pi_\#^{(y,\tilde y)} \gamma \in \Gamma_o(\nu,\tilde \nu)$,  one has
    \[ (1+\lambda \tau) W_2(\tilde \mu, \tilde \nu) \leq W_2(\mu, \tilde \nu) + W_2(\nu, \tilde \nu) \leq  W_2(\mu, \nu) + 2 W_2(\nu, \tilde \nu). \]
    By exchanging the roles of $\mu$ and $\nu$, we obtain the thesis.
\end{proof}

In the next Corollary we show that if one of the two distances $W_2(\tilde \mu, \mu)$ or $W_2(\tilde \nu, \nu)$ is smaller compared to the distance of the starting points $W_2(\mu,\nu)$, then we have $(1+\delta)$- Lipschitz regularity.

\begin{corollary}\label{cor:nualmostmini}
    Let $\cF:\P_2(\R^d) \to (-\infty, +\infty]$  be proper, lower semicontinuous and $\lambda$-convex along generalized geodesics, $\tau>0$ such that $\lambda \tau >-1$. Let $\mu, \nu \in \P_2(\R^d)$,  $\tilde \mu := \prox_{\tau \cF}^W(\mu)$, $\tilde \nu := \prox_{\tau \cF}^W(\nu)$ and $\eps>0$. If $W_2(\nu, \tilde{\nu}) \leq \eps^2 $, then 
    \[ (1+\lambda \tau) W_2(\tilde \mu, \tilde \nu)\leq (1+ \eps)W_2(\mu,\nu) \qquad \forall \, \mu \text{ s.t. } W_2(\mu, \nu) \geq 2\eps.\]
    The condition $W_2(\nu, \tilde{\nu}) \leq \eps^2 $ holds if ${\rm dist}(\nu, \argmin \cF) \leq \eps^2$.
    %In particular, if either $W_2(\tilde \mu, \mu) $ or $W_2(\tilde \nu, \nu) $ is less than $W_2(\mu,\nu)$, then
    %\[W_2(\tilde \mu, \tilde \nu) \leq 3 W_2(\mu,\nu).\]
\end{corollary}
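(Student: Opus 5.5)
The plan is to deduce the estimate directly from Lemma~\ref{lem:w2mumu-estimate}, and then to check the final claim about the minimizers separately.

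First, Lemma~\ref{lem:w2mumu-estimate}, applied with the minimum replaced by the term $W_2(\tilde\nu,\nu)$, gives
\[
(1+\lambda\tau)W_2(\tilde\mu,\tilde\nu)\le W_2(\mu,\nu)+2W_2(\nu,\tilde\nu)\le W_2(\mu,\nu)+2\eps^2 .
\]
Next, the hypothesis $W_2(\mu,\nu)\ge 2\eps$ gives $2\eps^2=\eps\cdot 2\eps\le \eps\,W_2(\mu,\nu)$. Substituting this yields $(1+\lambda\tau)W_2(\tilde\mu,\tilde\nu)\le (1+\eps)W_2(\mu,\nu)$, which is the main inequality.

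For the final sentence we need to show that $W_2(\nu,\tilde\nu)\le \dist(\nu,\argmin\cF)$. Fix $\bar\nu\in\argmin\cF$ and compare the proximal functional at $\tilde\nu$ and at $\bar\nu$:
\[
\cF(\tilde\nu)+\tfrac1{2\tau}W_2^2(\tilde\nu,\nu)\le \cF(\bar\nu)+\tfrac1{2\tau}W_2^2(\bar\nu,\nu).
\]
Since $\cF(\bar\nu)\le\cF(\tilde\nu)$, this gives $W_2(\tilde\nu,\nu)\le W_2(\bar\nu,\nu)$. Taking the infimum over $\bar\nu\in\argmin\cF$ gives the claim, and hence $W_2(\nu,\tilde\nu)\le\eps^2$ whenever $\dist(\nu,\argmin\cF)\le\eps^2$.

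The only subtle point is the $\lambda$-convex case. The comparison above uses only that $\bar\nu$ minimizes $\cF$, not any convexity, so it holds for every $\lambda$. If $\argmin\cF$ is empty, the statement is vacuous. No step here is a real obstacle, since everything reduces to Lemma~\ref{lem:w2mumu-estimate}.
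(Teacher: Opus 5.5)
Your proposal is correct and follows essentially the same route as the paper: Lemma~\ref{lem:w2mumu-estimate} gives the bound $W_2(\mu,\nu)+2\eps^2$, and the hypothesis $W_2(\mu,\nu)\ge 2\eps$ absorbs $2\eps^2$ into $\eps\,W_2(\mu,\nu)$. The final claim is obtained, as in the paper, by using a minimizer of $\cF$ as a competitor in the proximal problem and then taking the infimum over $\argmin\cF$.
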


\begin{proof} Thanks to Lemma~\ref{lem:w2mumu-estimate} we can estimate $(1+\lambda \tau) W_2(\tilde \mu, \tilde \nu) \leq W_2(\mu, \nu) + 2 \eps^2$. Moreover our assumption gives $2 \eps^2 \leq \eps \cdot W_2(\mu, \nu)$, thus we get the conclusion. 

Finally, for the last point let us use $\mu_* \in \argmin{\cF}$ as a competitor in the definition of $\prox_{\tau \cF}^W(\nu)$:

$$\frac{ W_2^2(\nu,\tilde {\nu})}{2\tau} + \cF(\tilde \nu) \leq  \frac{ W_2^2(\nu,\mu_*)}{2\tau} + \cF(\mu_*) \leq  \frac{ W_2^2(\nu,\mu_*)}{2\tau} + \cF(\tilde \nu).$$
In particular $W_2^2(\nu,\tilde {\nu}) \leq W_2^2(\nu,\mu_*)$; we get the conclusion taking the infimum with $\mu_* \in \argmin{\cF}$.
\end{proof}

For the next result, recall that in \cite{adve2020nonexpansiveness} it is shown that \eqref{eqn:1delta-Lip} holds with $\delta=0$ if one of the two starting measures is a Dirac delta and $\lambda=0$. This is equivalent to state: if $\Var(\nu)=0$ then
$$ W_2^2(\tilde{\mu}, \tilde{\nu}) \leq W_2^2(\mu,\nu) \qquad \forall\,  \mu \in \P_2(\R^d).$$

The next theorem can be seen as a quantitative version of that statement. We start by collecting standard estimates for the Wasserstein distance using variance and barycenters.

\begin{lemma}\label{lem:var_estimate} Let $\mu, \nu \in \P_2(\R^d)$. Then for every $\eta \in \Gamma(\mu,\nu)$ we have
\begin{align}\int \|x-y\|^2 \, d\eta &\leq \| M(\mu) - M(\nu)\|^2 +\Bigl( \sqrt{\Var(\mu)} +\sqrt{\Var(\nu)} \: \Bigr)^2;\label{eq:var_estimate_above} \\
\int \|x-y\|^2 \, d\eta &\geq \| M(\mu) - M(\nu)\|^2 + \Bigl( \sqrt{\Var(\mu)} -\sqrt{\Var(\nu)}\: \Bigr)^2;\label{eq:var_estimate_below}\\
\int \|x-y\|^2 \, d\eta &\leq W_2^2(\mu,\nu) + 4 \sqrt{ \Var(\mu)  \Var(\nu)}.\label{eq:var_estimate}
\end{align}
\end{lemma}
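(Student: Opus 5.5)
The natural approach is to center both measures at their barycenters and expand the cost of $\eta$ in the resulting decomposition. Write $a=M(\mu)$, $b=M(\nu)$, and for $(x,y)$ distributed according to $\eta$ set $x=a+u$, $y=b+v$, so that $\int u\,d\eta=\int v\,d\eta=0$, $\int\|u\|^2d\eta=\Var(\mu)$ and $\int\|v\|^2d\eta=\Var(\nu)$. Expanding the square gives
\[
\int\|x-y\|^2\,d\eta=\|a-b\|^2+\Var(\mu)+\Var(\nu)-2\int\langle u,v\rangle\,d\eta ,
\]
because the mixed terms $2\langle a-b,u-v\rangle$ integrate to zero. All three inequalities then reduce to bounding the covariance term $C(\eta):=\int\langle u,v\rangle\,d\eta$.

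For \eqref{eq:var_estimate_above} and \eqref{eq:var_estimate_below}, apply Cauchy--Schwarz in $L^2(\eta)$:
\[
|C(\eta)|\le\|u\|_{L^2(\eta)}\|v\|_{L^2(\eta)}=\sqrt{\Var(\mu)\Var(\nu)} .
\]
Substituting $-2C(\eta)\le 2\sqrt{\Var(\mu)\Var(\nu)}$ into the identity produces $\bigl(\sqrt{\Var(\mu)}+\sqrt{\Var(\nu)}\bigr)^2$. Substituting $-2C(\eta)\ge -2\sqrt{\Var(\mu)\Var(\nu)}$ produces $\bigl(\sqrt{\Var(\mu)}-\sqrt{\Var(\nu)}\bigr)^2$.

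For \eqref{eq:var_estimate}, apply the identity twice: once to the given $\eta$, and once to an optimal plan $\eta_o\in\Gamma_o(\mu,\nu)$, which exists. Subtracting the two expressions cancels the barycenter and variance terms, and we get
\[
\int\|x-y\|^2\,d\eta-W_2^2(\mu,\nu)=2\bigl(C(\eta_o)-C(\eta)\bigr)\le 4\sqrt{\Var(\mu)\Var(\nu)} ,
\]
since each covariance term is bounded in absolute value by $\sqrt{\Var(\mu)\Var(\nu)}$. An equivalent route is to combine the upper bound \eqref{eq:var_estimate_above} for $\eta$ with the lower bound \eqref{eq:var_estimate_below} for $W_2^2(\mu,\nu)$ (taking $\eta=\eta_o$); the difference of the two right-hand sides is exactly $4\sqrt{\Var(\mu)\Var(\nu)}$.

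There is no real obstacle here. The only points to check are that the cross terms vanish, which uses only the marginal constraints of $\eta$, and that the second moments are finite, which holds because $\mu,\nu\in\P_2(\R^d)$ and therefore every integral above is well defined.
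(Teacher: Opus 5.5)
Your proof is correct and follows the paper's argument essentially verbatim. Both center at the barycenters so the cross terms vanish, then bound the covariance term $\int\langle x-M(\mu),\,y-M(\nu)\rangle\,d\eta$ by Cauchy--Schwarz to get the upper and lower bounds. Both then obtain the third inequality by subtracting the lower bound for an optimal plan $\eta_o\in\Gamma_o(\mu,\nu)$ from the upper bound for $\eta$.
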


\begin{proof}

For every $\eta \in\Gamma(\mu,\nu)$ let $C_{\eta}:= \int \| x-y\|^2 d \eta -\| M(\mu)-M(\nu)\|^2 - \Var(\mu) - \Var (\nu)$. We have
    \[\begin{aligned}
        \int \|x-y\|^2 \, d\eta &= \int \|M(\mu)-M(\nu)\|^2 + \|x-M(\mu)-(y-M(\nu))\|^2  \\
        & \hspace{1.5cm}+ 2 \langle M(\mu)-M(\nu), x- M(\mu) - (y-M(\nu)) \rangle d\eta\\
        &= \|M(\mu)-M(\nu)\|^2 + \int\|x-M(\mu)-(y-M(\nu))\|^2 \, d \eta \\
        & = \|M(\mu)-M(\nu)\|^2 +\Var(\mu)+\Var(\nu) -2 \int \langle x-M(\mu), y-M(\nu)\rangle \, d \eta.
        \end{aligned}\]
In particular, using Cauchy-Schwarz inequality on the last term we get $$ -2\sqrt{\Var(\mu) \Var(\nu)} \leq  C_{\eta} \leq 2 \sqrt{\Var(\mu) \Var(\nu)} \qquad \forall \eta \in \Gamma(\mu,\nu).$$
Rearranging the terms we get \eqref{eq:var_estimate_above} and \eqref{eq:var_estimate_below}. Finally, substracting \eqref{eq:var_estimate_below} for $\eta_o \in \Gamma_o(\mu, \nu)$ and \eqref{eq:var_estimate_above}  we get \eqref{eq:var_estimate}.

\end{proof}

\begin{theorem} Let $\cF: \P_2(\R^d) \to (-\infty, +\infty]$ be proper, lower semicontinuous, $\lambda$-convex along generalized geodesics, $\tau>0$ such that $\lambda \tau >-1$. Let $0 < \eps <1$: if $\sqrt{\Var(\nu)} \leq \eps^2$, then 
    $$(1+\lambda \tau)^2W_2^2(\prox_{\tau \cF}^W(\mu), \prox_{\tau \cF}^W(\nu)) \leq (1+4\eps)   W_2^2(\mu, \nu) \qquad \forall \, \mu \text{ : } W_2(\mu, \nu) \geq 4 \eps.$$
\end{theorem}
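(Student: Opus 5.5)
The plan is to combine the weak non-expansivity of Theorem~\ref{thm:weak_ne} with the variance estimate \eqref{eq:var_estimate} of Lemma~\ref{lem:var_estimate}. Set $\tilde\mu:=\prox_{\tau\cF}^W(\mu)$ and $\tilde\nu:=\prox_{\tau\cF}^W(\nu)$. Since $\cF$ is $\lambda$-convex along generalized geodesics, it is in particular $\lambda$-convex along geodesics. Theorem~\ref{thm:weak_ne} therefore applies to any $3$-geodesic plan $\gamma\in\Gamma(\mu,\tilde\mu,\tilde\nu,\nu)$ with intermediate points $\tilde\mu,\tilde\nu$ along whose middle geodesic $\cF$ is $\lambda$-convex, and such plans exist by the remark following that theorem. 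Discarding the nonpositive term, this gives
\[
(1+\lambda\tau)^2 W_2^2(\tilde\mu,\tilde\nu)\le \int\|x-y\|^2\,d\eta,\qquad \eta:=\pi^{x,y}_\#\gamma\in\Gamma(\mu,\nu).
\]
The coupling $\eta$ need not be optimal, so the remaining task is to bound its cost by $W_2^2(\mu,\nu)$ plus a small error controlled by $\Var(\nu)$.

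For this, apply \eqref{eq:var_estimate} to $\eta$:
\[
\int\|x-y\|^2\,d\eta\le W_2^2(\mu,\nu)+4\sqrt{\Var(\mu)\Var(\nu)}.
\]
Next control $\sqrt{\Var(\mu)}$ by $W_2(\mu,\nu)$. Write $V:=\sqrt{\Var(\nu)}\le\eps^2$. By \eqref{eq:var_estimate_below} with an optimal plan, $\sqrt{\Var(\mu)}\le W_2(\mu,\nu)+V$. Hence
\[
(1+\lambda\tau)^2W_2^2(\tilde\mu,\tilde\nu)\le W_2^2(\mu,\nu)+4V\,W_2(\mu,\nu)+4V^2 .
\]

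Finally, use the hypothesis $W:=W_2(\mu,\nu)\ge 4\eps$, which gives $\eps\le W/4$. Then
\[
4VW\le 4\eps^2W\le \eps W^2,\qquad 4V^2\le 4\eps^4\le 4\eps^3\cdot\frac{W^2}{16}\le \eps W^2,
\]
where the second chain uses $\eps^2\le W^2/16$ and $\eps<1$. Summing, the right-hand side is at most $(1+2\eps)W^2\le(1+4\eps)W^2$, which gives the statement with room to spare.

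The only step needing care is verifying that the cross term from \eqref{eq:var_estimate} is genuinely of lower order. That requires bounding $\Var(\mu)$ in terms of $W$ and $V$ alone, since $\mu$ itself is unconstrained. This is handled by \eqref{eq:var_estimate_below}, and after that the argument is elementary bookkeeping.
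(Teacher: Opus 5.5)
Your proof is correct. After the common first step it takes a more direct route than the paper.

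Both arguments begin by combining weak non-expansivity with \eqref{eq:var_estimate}, which gives $(1+\lambda\tau)^2W_2^2(\tilde\mu,\tilde\nu)\le W^2+4\sqrt{\Var(\mu)\Var(\nu)}$.

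The paper then argues by contradiction. If the claim failed, one would have $W^2<\eps\sqrt{\Var(\mu)}$. Feeding this into \eqref{eq:var_estimate_below} and completing a square places $(M(\mu),\sqrt{\Var(\mu)})$ near $(M(\nu),\eps/2+\eps^2)$ in $\R^{d+1}$. Then \eqref{eq:var_estimate_above} and the triangle inequality in $\R^{d+1}$ force $W<4\eps$.

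You instead bound $\sqrt{\Var(\mu)}\le W+V$ a priori; this is just the triangle inequality through $\delta_{M(\nu)}$. The right-hand side then becomes $W^2+4VW+4V^2=(W+2V)^2$. What this buys:
\begin{itemize}
\item it uses only \eqref{eq:var_estimate_below};
\item it needs no normalization $\Var(\nu)=\eps^4$, which the paper imposes ``without loss of generality'';
\item it gives the unconditional additive estimate $(1+\lambda\tau)W_2(\tilde\mu,\tilde\nu)\le W_2(\mu,\nu)+2\sqrt{\Var(\nu)}$ for every $\mu$, the analogue of Lemma~\ref{lem:w2mumu-estimate};
\item the theorem then follows as in Corollary~\ref{cor:nualmostmini}, with the better constant $(1+\eps/2)^2$, since $2V\le 2\eps^2\le \tfrac{\eps}{2}W$.
\end{itemize}

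There is one arithmetic slip. The step $4\eps^4\le 4\eps^3\cdot W^2/16$ would need $W^2\ge 16\eps$, which does not follow from $W\ge 4\eps$ when $\eps<1$. Write $4\eps^4\le 4\eps^2\cdot W^2/16=\eps^2W^2/4\le \eps W^2$ instead, or simply use the $(W+2V)^2$ form. The conclusion is unaffected.
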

\begin{proof} We assume without loss of generality that  $\Var(\nu)= \eps^4$. Let us consider $\mu$ such that $W_2(\mu, \nu) \geq 4 \eps$ and let $\tilde{\mu}\coloneqq \prox_{\tau \cF}^W(\mu)$, $\tilde{\nu}\coloneqq\prox_{\tau \cF}^W(\nu)$. Thanks to weak non-expansivity   and Lemma~\ref{lem:var_estimate} we have
\begin{equation}\label{eqn:Var_contr} (1+\lambda \tau)^2 W_2^2(\tilde{\mu}, \tilde{\nu}) \leq W_2^2(\mu, \nu) + 4 \sqrt{ \Var(\mu) \Var(\nu)} \leq W_2^2(\mu, \nu) + 4 \eps^2  \sqrt{\Var(\mu)}.\end{equation}
Suppose by contradiction $(1+\lambda \tau)^2 W_2^2(\tilde{\mu}, \tilde{\nu}) > (1+4\eps)W_2^2(\mu,\nu)$; then substituting in \eqref{eqn:Var_contr} we have
\begin{equation}\label{eqn:W2Var_1} W_2^2(\mu,\nu) < \eps\sqrt{\Var(\mu)}.
\end{equation}
Using \eqref{eq:var_estimate_below} for an optimal plan leads to
\begin{equation}\label{eqn:W2Var_1} \|M(\mu)-M(\nu)\|^2+\Bigl( \sqrt{\Var(\mu)}- \eps^2 \Bigr)^2 < \eps\sqrt{\Var(\mu)},  
\end{equation}
which rearranging the terms becomes: 
\begin{equation}\label{eqn:W2Var_2} \|M(\mu)-M(\nu)\|^2+\Bigl( \sqrt{\Var(\mu)}- (\eps^2+\tfrac{\eps}2 )\Bigr)^2 < \tfrac{\eps^2}4 +  \eps^3.
\end{equation}
Using \eqref{eq:var_estimate_above} and the triangular inequality in $\R^{d+1}$ we get
\begin{align*}
W_2(\mu,\nu) &< \| (M(\mu),\sqrt{\Var (\mu) } ) - (M(\nu), -\eps^2) \|_{R^{d+1}} \\
& \leq \| (M(\mu),\sqrt{\Var (\mu) } ) - (M(\nu), \tfrac{\eps}2+\eps^2) \|_{R^{d+1}} + \| (M(\nu), \tfrac{\eps}2+\eps^2) - (M(\nu), -\eps^2) \|_{R^{d+1}} \\
& < \sqrt{ \tfrac{\eps^2}4 +  \eps^3} + 2\eps^2 +\tfrac{\eps}2 \leq 4 \eps,
\end{align*}
which is a contradiction.
\end{proof}

%So that, again, if $ \Var(\mu)+ \Var(\nu) \leq W_2^2(\mu,\nu)$, we have Lipschitzianity. {\color{blue}Notice that by the above result adapted to strong convexity, we could even have nonespansivity...!}

\subsection{Non-expansivity for dilations}\label{subsec:affine}

Here we prove that for a special case of $\mu,\nu$ we have non-expansivity for every function convex along gneralized geodesics; the particular case is $\nu= T_{\sharp}\mu$ when $T$ is a dilation.

\begin{prop}[Translation and dilations]
Let $\cF:\P_2(\R^d)\to(-\infty, +\infty]$ be proper, lower semicontinuous and $\lambda$-convex along generalized geodesics, $\tau>0$ such that $\lambda \tau >-1$. Let $\mu\in \P_2(\R^d)$ and $\psi (x):= \frac{\alpha}{2} \|x\|^2 + \langle b, x\rangle $ where $\alpha >-1$ and $b\in \R^d$. Consider $\nu \coloneqq (I+\nabla \psi)_{\#}\mu$ be the correspondent dilation of $\mu$ and $\tilde \mu \coloneqq \prox_{\tau \cF}^W(\mu)$, $\tilde \nu := \prox_{\tau \cF}^W(\nu)$. Then 
\begin{equation}\label{eq:ne_affine} (1+\lambda \tau) W_2(\tilde \mu, \tilde \nu) \leq W_2(\mu,\nu).\end{equation}
\end{prop}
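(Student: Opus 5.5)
The plan is to apply the weak non-expansivity estimate of Theorem~\ref{thm:gg_wne} with a $3$-geodesic plan whose $(x,y)$-marginal is exactly the optimal plan $(\mathrm{Id},T)_\#\mu$. Here $T(x):=x+\nabla\psi(x)=(1+\alpha)x+b$ and $1+\alpha>0$. Since $T$ is the gradient of the convex function $\frac{1+\alpha}{2}\|x\|^2+\langle b,x\rangle$, the plan $(\mathrm{Id},T)_\#\mu$ belongs to $\Gamma_o(\mu,\nu)$, and its cost is $W_2^2(\mu,\nu)$. Once such a $\gamma$ is available, Theorem~\ref{thm:gg_wne} gives
\[
(1+\lambda\tau)^2W_2^2(\tilde\mu,\tilde\nu)\le \int\|x-y\|^2\,d\pi^{x,y}_\#\gamma - \int \|x-y-(1+\lambda\tau)(\tilde x-\tilde y)\|^2\,d\gamma\le W_2^2(\mu,\nu).
\]
Dropping the nonnegative subtracted term and taking square roots yields \eqref{eq:ne_affine}.

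\textbf{Construction of $\gamma$.} By $\lambda$-convexity along (outer) generalized geodesics applied with base $\mu$ and endpoints $\tilde\mu,\tilde\nu$, there is $\gamma'\in\Gamma(\mu,\tilde\mu,\tilde\nu)$ with the following properties. In coordinates $(x,\tilde x,\tilde y)$, we have $\pi^{x,\tilde x}_\#\gamma'\in\Gamma_o(\mu,\tilde\mu)$ and $\pi^{x,\tilde y}_\#\gamma'\in\Gamma_o(\mu,\tilde\nu)$. Moreover, $\cF$ satisfies \eqref{eq:convgg} along $t\mapsto(\pi_t^{\tilde x\to\tilde y})_\#\gamma'$.

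No gluing lemma is needed, because $\nu$ is the deterministic image of $\mu$. I would simply define $\gamma$ as the law of $(\tilde x,x,\tilde y,T(x))$ under $\gamma'$. In the ordering of Theorem~\ref{thm:gg_wne}, this means $\gamma\in\Gamma(\tilde\mu,\mu,\tilde\nu,\nu)$. The required properties then read off directly:
\begin{itemize}
\item the $(\tilde x,x)$- and $(x,\tilde y)$-marginals are optimal, by the choice of $\gamma'$;
\item the interpolation between $\tilde\mu$ and $\tilde\nu$ is unchanged, so $\cF$ is still $\lambda$-convex along it;
\item $\pi^{x,y}_\#\gamma=(\mathrm{Id},T)_\#\mu$.
\end{itemize}

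\textbf{Main obstacle.} The one nontrivial condition is that the $(\tilde y,y)$-marginal, namely the law of $(\tilde y,T(x))$, lies in $\Gamma_o(\tilde\nu,\nu)$. I would prove this through cyclical monotonicity. The support of $\sigma:=\pi^{x,\tilde y}_\#\gamma'$ is cyclically monotone since $\sigma$ is optimal. For points $(x_i,\tilde y_i)$ in that support, with indices taken cyclically,
\[
\sum_i\langle T(x_i),\tilde y_{i+1}-\tilde y_i\rangle=(1+\alpha)\sum_i\langle x_i,\tilde y_{i+1}-\tilde y_i\rangle+\Big\langle b,\sum_i(\tilde y_{i+1}-\tilde y_i)\Big\rangle\le 0,
\]
because $1+\alpha>0$ and the telescoping sum vanishes. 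Hence the image of $\operatorname{supp}\sigma$ under $(x,\tilde y)\mapsto(T(x),\tilde y)$ is cyclically monotone. The law of $(T(x),\tilde y)$ is supported in the closure of this image, which is still cyclically monotone by continuity of the defining inequalities. Since both marginals have finite second moments, this plan is optimal. This is exactly where the restriction to dilations (positive multiples of the identity plus a translation) is used: a general affine map would destroy cyclical monotonicity. With this in hand, the estimate from Theorem~\ref{thm:gg_wne} described in the first paragraph concludes the proof.
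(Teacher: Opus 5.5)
Your proof is correct, and it takes a somewhat different route from the paper's.

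The paper's proof runs in three steps:
\begin{itemize}
\item It first assumes $\mu$ (hence $\nu$) absolutely continuous and takes the optimal maps $T_\mu$, $T_\nu$ onto $\tilde\mu$, $\tilde\nu$. It observes that $T_\nu\circ(I+\nabla\psi)$ is again the gradient of a convex function, so $(I,T_\mu,T_\nu\circ(I+\nabla\psi))_\#\mu$ induces the \emph{unique} generalized geodesic with base $\mu$ between $\tilde\mu$ and $\tilde\nu$, and convexity applies along it.
\item It then reruns the first-variation argument of Theorem~\ref{thm:tbgg_ne}.
\item Finally it removes absolute continuity by density together with continuity of $\prox^W_{\tau\cF}$.
\end{itemize}

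You use the same underlying invariance, namely that cyclical monotonicity survives $x\mapsto(1+\alpha)x+b$ with $1+\alpha>0$. You apply it in the opposite direction: you start from whatever generalized geodesic the hypothesis supplies and push its $(x,\tilde y)$-coupling forward to an optimal $(\tilde y,y)$-coupling.

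This buys several things:
\begin{itemize}
\item You need no uniqueness of generalized geodesics, no transport maps, and no approximation step. The paper's approximation step, as written, implicitly requires choosing $\nu_n=(I+\nabla\psi)_\#\mu_n$ and relies on continuity of the prox.
\item By invoking Theorem~\ref{thm:gg_wne}, you handle $\lambda\neq 0$ directly rather than ``similarly''.
\item You retain the firm-nonexpansivity remainder $\int\|x-y-(1+\lambda\tau)(\tilde x-\tilde y)\|^2\,d\gamma$.
\end{itemize}

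The paper's map-based construction is more concrete, since it works with explicit Brenier potentials. Yours is shorter and applies verbatim to every $\mu\in\P_2(\R^d)$.
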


\begin{proof}
 We first prove the statement for absolutely continuous measures $\mu$ and $\nu$. We have the following situation.
        \begin{center}
\begin{tikzpicture}[
  node distance=2cm,
  every node/.style={font=\normalsize}
]
  % Corner measures (no boxes)
  \node (mu) {$\mu$};
  \node (nu) [right=of mu] {$\nu$};
  \node (tmu)  [below=of mu] {$\tilde \mu$};
  \node (tnu)  [below=of nu] {$\tilde \nu$};

  % Center gamma (exact center of the square)
  \node (gam) at (1.25,-1.25) {$\gamma$};

  % Square edges with labels (no dashed lines)
  \draw[->] (mu) -- node[above] {$S$} (nu);

  \draw[<-] (tmu)  -- node[left]  {$T_\mu$} (mu);

  \draw[<-] (tnu)  -- node[right] {$T_\nu$} (nu);

  \draw (tmu)  -- node[below] {\tiny $(T_\mu,T_\nu \circ S)_\#\mu$} (tnu);
\end{tikzpicture}
\end{center}
where the maps $S: x \mapsto x+\nabla \psi$, $T_{\mu}= I+\nabla \varphi_\mu$ and $T_{\nu}= I+\nabla \varphi_\nu$ are optimal (and thus $\varphi_\mu$ and $\varphi_\nu$ are convex functions). Notice that $T_\nu\circ S = I +\nabla \psi + \nabla \varphi_\nu\circ (I+\nabla \psi)$. Let $h(x):= \frac{1}{1+\alpha}\varphi_\nu((1+\alpha)x +b)$ so that
\[\nabla h(x) = \varphi_\nu((1+\alpha)x +b),\]
and $\nabla \varphi_\nu\circ (I+\nabla \psi)$ is the gradient of a convex function. Thus $T_\nu \circ S$ is an optimal map. Define now the plan $\gamma := (I, S, T_{\mu}, T_{\nu}\circ S)_\#\mu\in \Gamma(\mu,\nu,\tilde \mu, \tilde \nu)$. We have that $\pi^{(x,\tilde x)}_\#\gamma =(I,T_\mu)_\#\mu$ which is optimal, $\pi^{(x,y)}_\#\gamma =(I,S)_\#\mu$, which is optimal, and $\pi^{(y,\tilde y)}_\#\gamma =(S,T_\nu\circ S)_\#\mu = (I,T_\nu)_\# (S_\#\mu) =  (I,T_\nu)_\# \nu$, which is also the optimal one. Consider $(\mu_t)_{t\in[0,1]}$, with $\mu_t\coloneqq (\pi^{\tilde x\to \tilde y}_t)_{\#}  \gamma$.  Let $\bar \gamma := \pi^{(x,\tilde x, \tilde y)}_\# \gamma = (I, T_\mu,T_\nu \circ S)_\# \mu$. Notice that $\tilde \mu_t:=(\pi_t)_\#\bar \gamma$, with $\pi_t(x, \tilde x, \tilde y) = (1-t) \tilde x + t\tilde y$, is a generalized geodesic between $\tilde \mu$ and $\tilde \nu$ and it is the only one possible with base point $\mu$ (see \cite[Remark 9.2.3]{AGS08}). Since it is the only one, we can use convexity along generalized geodesics, to get convexity along $\tilde \mu_t$. We can therefore reason exactly as in the proof of Theorem \ref{thm:tbgg_ne} and conclude.

Finally, let $\mu,\, \nu\in \P_2(\R^d)$, not necessarily absolutely continuous with respect to the Lebesgue measure. Let $\{\mu_n\}_n\subset \P_2^{ac}(\R^d)$ and $\{\nu_n\}_n\subset \P_2^{ac}(\R^d)$ be two sequences converging in $W_2$ to $\mu$ and $\nu$ respectively. Then, since $\prox_{\tau \cF}^W$ is continuous we also have $\tilde \mu_n\coloneqq\prox_{\tau \cF}^W(\mu_n)$ and $\tilde \nu_n\coloneqq\prox_{\tau \cF}^W(\nu_n)$ converging  to $\tilde \mu$ and $\tilde \nu$, respectively. We therefore can pass to the limit in inequality \eqref{eq:ne_affine}, which we  holds for every $\mu_n,\nu_n\in \P_2^{ac}(\R^d)$ and get the desired result.
\end{proof}

\section{Local H\"olderianity of the proximal operator under convexity along generalized geodesics}\label{sec:loc hol}

In this section we prove a local $\frac 12$-H\"{o}lder continuity result to every proximal operator of a functional which is convex along (outer) generalized geodesics.

The first quantitative result for a proximal operator that we know of is found in \cite[Proposition 2.3.4, Remark 2.3.5]{Roudneff2011ModelisationMD}. It deals with the $W_2$-projection on the set of density costrained measures on a convex set.

\begin{prop} Let $K = \{ \rho \in \cP^{a.c.}(\Omega) : \rho \leq 1 \}$, with $\Omega\subseteq \R^d$, be the set of density constrained measures. For every $\rho \in \P^{a.c.}(\Omega)$, there exists a unique Wasserstein projection $P_K \rho := \arg\min_{\nu \in K} W_2(\rho, \nu)$. 
Moreover, $P_K$ is continuous and satisfies a local $\frac{1}{2}$-Hölder continuity: for any $\rho^1, \rho^2$ in a neighborhood of $K$ (i.e., $W_2(\rho^i, K) \leq C$), we have
\[
W_2(P_K \rho^1, P_K \rho^2) \leq \sqrt{W_2^2(\rho^1, \rho^2) + 2C \cdot  W_2(\rho^1, \rho^2)}.
\]
\end{prop}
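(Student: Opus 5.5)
The plan is to obtain the statement as a special case of the general machinery developed above, with the density constraint entering only through the convexity of $\mathbf{1}_K$. First, I would recall that for $\Omega$ convex, $K=\{\rho\le 1\}$ is convex along (outer) generalized geodesics. For any base $\sigma$ and any $\rho_0,\rho_1\in K$, the displacement interpolation $(\pi_t^{2\to3})_\#\gamma$ of a generalized geodesic obeys the same density bound $\le 1$. This follows from the McCann-type determinant inequality for the interpolated map, exactly as in the proof that internal energies with $P(s)=\mathbf{1}_{\{s>1\}}\cdot\infty$ are convex along generalized geodesics \cite{AGS08}. Hence $\mathbf{1}_K$ is proper, lower semicontinuous (since $K$ is weakly closed) and convex along generalized geodesics. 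By \eqref{eq:prox=proj} and the lemma on single-valuedness of $\prox^W_{\tau\cF}$, $P_K\rho$ exists and is unique for every $\rho$, in particular for $\rho\in\P^{a.c.}(\Omega)$. Continuity of $P_K$ is then \cite[Lemma~4.1.2]{AGS08}, and in any case it will follow from the quantitative estimate.

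For the Hölder bound, set $\tilde\rho^i:=P_K\rho^i$ and $W:=W_2(\rho^1,\rho^2)$. I would apply Theorem~\ref{thm:gg_wne} with $\lambda=0$, using the $3$-geodesic plan $\gamma\in\Gamma(\tilde\rho^1,\rho^1,\tilde\rho^2,\rho^2)$. This plan is built from a generalized geodesic between $\tilde\rho^1$ and $\tilde\rho^2$ with base $\rho^1$, glued along $\tilde\rho^2$ with an optimal plan in $\Gamma_o(\tilde\rho^2,\rho^2)$. Theorem~\ref{thm:gg_wne} then gives
\[
W_2^2(\tilde\rho^1,\tilde\rho^2)\le \int\|x-y\|^2\,d\gamma ,
\]
where $(x,\tilde y)$ and $(\tilde y,y)$ are optimally coupled. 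The core step is to compare this cost with $W^2$, since the $(x,y)$-marginal of $\gamma$ is not optimal. I would use the pointwise identity
\[
\|x-y\|^2=\|x-\tilde y\|^2-\|y-\tilde y\|^2-2\langle x-y,\,y-\tilde y\rangle .
\]
Integrating it, using optimality of the two couplings and Cauchy--Schwarz, yields
\[
\int\|x-y\|^2d\gamma\le W_2^2(\rho^1,\tilde\rho^2)-W_2^2(\rho^2,\tilde\rho^2)+2\|x-y\|_{L^2(\gamma)}\,W_2(\rho^2,\tilde\rho^2).
\]
The first two terms combine via $a^2-b^2=(a-b)(a+b)$ and the triangle inequality into at most $W\,(W+2W_2(\rho^2,K))$, using $W_2(\rho^2,\tilde\rho^2)=W_2(\rho^2,K)\le C$. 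The last term is at most $2C\,\|x-y\|_{L^2(\gamma)}$. Solving the resulting quadratic inequality in $A:=\|x-y\|_{L^2(\gamma)}$ gives a bound of the form $W_2^2(\tilde\rho^1,\tilde\rho^2)\le W^2+c\,C\,W+O(C^2)$.

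The main obstacle is precisely this last step. The naive argument leaves a term of order $C^2$ (equivalently a bound like $(W+2C)^2$), which does not vanish as $W\to0$ and would destroy the Hölder modulus. To remove it, I would symmetrize. I would run the same argument with the roles of $\rho^1,\rho^2$ exchanged, and, instead of discarding it, exploit the first-order optimality inequality from the proof of Theorem~\ref{thm:gg_wne}:
\[
\int\langle\tilde x-\tilde y,\,(x-\tilde x)-(y-\tilde y)\rangle\,d\gamma\ge0 .
\]
With it I would write
\[
\int\|\tilde x-\tilde y\|^2d\gamma\le\int\langle\tilde x-\tilde y,\,x-y\rangle\,d\gamma .
\]
I would then estimate $\langle\tilde x-\tilde y,\,x-y\rangle$ by splitting $x-y=(x-\tilde y)+(\tilde y-y)$, so that only linear-in-$C$ error terms multiplied by displacements of size $O(W)$ appear. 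This is where the exact constant $2C$ of the statement should come out, and where I expect the bookkeeping to be delicate.
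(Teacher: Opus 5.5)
\textbf{There is a genuine gap in the H\"older estimate.} Your first paragraph is fine: existence, uniqueness and continuity follow from convexity of $\mathbf 1_K$ along outer generalized geodesics, which needs $\Omega$ convex, an assumption you rightly add. But the H\"older estimate is never actually proved, and the route you sketch cannot reach it.

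The defect is built into the glued plan $\gamma\in\Gamma(\tilde\rho^1,\rho^1,\tilde\rho^2,\rho^2)$ of Theorem~\ref{thm:gg_wne}. There the minimality of $\tilde\rho^2$ is tested along a curve that is not a generalized geodesic with base $\rho^2$, so the pair $(\tilde x,y)$ is not optimally coupled. Set $D:=\|\tilde x-\tilde y\|_{L^2(\gamma)}$ and split $x-y=(x-\tilde y)+(\tilde y-y)$ in $D^2\le\int\langle\tilde x-\tilde y,x-y\rangle\,d\gamma$. The first piece polarizes exactly, because $(x,\tilde x)$ and $(x,\tilde y)$ are optimal. The second piece is only bounded by $C\,D$. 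The factor $\tilde x-\tilde y$ is the unknown displacement itself, not one of size $O(W)$, so your claim that only ``linear-in-$C$ terms times $O(W)$'' appear is false. What you actually get is
\[
D^2\le W_2^2(\rho^1,\tilde\rho^2)-W_2^2(\rho^1,\tilde\rho^1)+2C\,D ,
\]
which as $W\to0$ only yields $D\le 2C+o(1)$, with no H\"older modulus. Equivalently, polarizing the whole first-order inequality gives
\[
2D^2\le\bigl[W_2^2(\rho^1,\tilde\rho^2)-W_2^2(\rho^1,\tilde\rho^1)\bigr]+\Bigl[\int\|\tilde x-y\|^2\,d\gamma-W_2^2(\rho^2,\tilde\rho^2)\Bigr].
\]
The non-optimal cost $\int\|\tilde x-y\|^2\,d\gamma$ can only be controlled through the chain $\tilde x\to x\to\tilde y\to y$, which reintroduces an $O(C^2)$ term. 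Exchanging the roles of $\rho^1,\rho^2$ in a second glued plan has exactly the same flaw, so symmetrizing these plan-based inequalities does not help.

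\textbf{The missing idea is to decouple the two optimality conditions.} This is what the proof of Theorem~\ref{thm:hold-cont} does; the statement is its case $\cF=\mathbf 1_K$, with $C_{\rho^i}=W_2(\rho^i,\tilde\rho^i)=\dist(\rho^i,K)\le C$.
\begin{enumerate}
\item Test the minimality of $\tilde\rho^1$ only along a generalized geodesic from $\tilde\rho^1$ to $\tilde\rho^2$ with base $\rho^1$, staying in $K$.
\item Test the minimality of $\tilde\rho^2$ only along a different generalized geodesic from $\tilde\rho^2$ to $\tilde\rho^1$ with base $\rho^2$.
\item Use that $\frac12W_2^2(\rho^i,\cdot)$ is $1$-convex along generalized geodesics with base $\rho^i$. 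Dividing by $t$ and letting $t\to0$, each side gives an inequality involving only Wasserstein distances, for instance
\[
W_2^2(\tilde\rho^1,\tilde\rho^2)\le W_2^2(\rho^1,\tilde\rho^2)-W_2^2(\rho^1,\tilde\rho^1).
\]
\item Add this to its symmetric counterpart and regroup as $[W_2^2(\tilde\rho^2,\rho^1)-W_2^2(\tilde\rho^2,\rho^2)]+[W_2^2(\tilde\rho^1,\rho^2)-W_2^2(\tilde\rho^1,\rho^1)]$.
\item Bound each bracket by $a^2-b^2\le|a-b|(a+b)$, with $|a-b|\le W$ and $a+b\le W+2C$ by the triangle inequality. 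This gives $2W_2^2(\tilde\rho^1,\tilde\rho^2)\le 2W(W+2C)$, which is exactly the claimed bound.
\end{enumerate}
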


This result was later generalized for projections on $K_f = \{ \varrho \in \mathcal{P}_2(\Omega) : \varrho \leq f \}$, the set of probability measures with density bounded above by $f$. In fact in \cite[Corollary 5.3]{DePhilippis2016} the strong-weak continuity of $P_{K_f}$ is shown (that is, $W_2$ convergence of $\mu_n$ implies weak convergence of $P_{K_f}$), but again the quantitative stability is proven only if $f=1$.
% \begin{prop}(\cite[Corollary 5.3]{DePhilippis2016}).
% For fixed $f$, the map $P_{K_f} : \mathcal{P}_2(\Omega) \to \mathcal{P}_2(\Omega)$ defined through
% \[
% P_{K_f}\mu := \operatorname{argmin}\{W_2^2(\varrho, \mu) : \varrho \in K_f\}
% \]
% is continuous in the following sense: if $\mu_n \to \mu$ for the $W_2$ distance, then $P_{K_f}[\mu_n] \to P_{K_f}[\mu]$ in the weak convergence.
% Moreover, in the case where $f = 1$ and $\Omega$ is a convex set, the projection is also locally $\frac{1}{2}$-Hölder continuous for $W_2$ on the whole $\mathcal{P}_2(\Omega)$.
% \end{prop}
The authors note in a subsequent remark, that is an open question whether the projection $P_{K_1}$ is $1$-Lipschitz. Furthermore, when $f$ is not constantly equal to $1$, even establishing the continuity of the projection with respect to the Wasserstein distance seems to be a delicate issue.
%\subsubsection*{Our  results}
We want to remark that recently, in \cite{DeFa} a $\frac 12$-H\"{o}lder continuity result is proven for the problem of the regularized moment measures: even though the setting is different the techinques used to prove that results are similar to ours. We believe that this kind of H\"{o}lder stability result can be generalized to more general variational problem involving the Wasserstein distance.

Before the main theorem, we state and prove the following lemma in the particular case of $(\P_2(\R^d),W_2)$ but we underline that it holds true in any metric space $(X,d)$ were the superlinearity is in terms of the distance $d$ from a point. For a set $A\subseteq \P_2(\R^d) $ we define $\cF(A)=\inf_A \cF$, and for $\mu\in \P_2(\R^d)$ we define $\dist(\mu,A)\coloneqq\inf\{W_2(\mu,\nu)\mid\nu\in A\}$ as the classical distance from a set.

\begin{lemma}\label{lem:bound}
Let $\cF:\P_2(\R^d)\to (-\infty, +\infty]$ and $\tau>0$. %Assume that $\cF$ is lower bounded by a linear function of the distance from a point, in the sense that there exists two constants $A,\, B$ in $\R$ such that $\cF(\mu)\geq A+B \cdot W_2(\mu,\bar \mu)$, for a fixed $\bar \mu\in D(F)$, for any $\mu\in\P_2(\R^d)$.
Assume there exists $\mu_0 \in D(\cF)$, and $A,\tau_* \in \R$ such that $\tau_* > \tau$ and 
\begin{equation}\label{eqn:quadraticbound}\cF(\mu)\geq A-\frac{1}{2\tau_*} \cdot W_2^2(\mu,\mu_0) \qquad \forall \mu\in\P_2(\R^d).
\end{equation}
%Then there exists $C(\tau, \bar \mu, r, A,B)>0$ such that for any $\mu\in B_r(\bar \mu)$,
Then, given $r>0$ and $\mu\in B_r(\mu_0)$, for every $\tilde{\mu} \in \prox_{\tau \cF}^W(\mu)$ we have
\begin{equation}\label{eqn:estimatertau}
W_2^2(\mu,\tilde{\mu})\leq \frac{2 \tau^* (\tau+\tau_*)}{(\tau_*-\tau)^2}r^2 + \frac{4\tau \tau_*}{\tau_*-\tau} ( \cF (\mu_0) -A).
\end{equation}
In particular the function $C^{\tau}_{\mu}:=\sup\{ W_2(\mu, \tilde \mu) : \tilde \mu \in \prox_{\tau \cF}^W (\mu)\}$ is locally bounded.
%exists a locally bounded function $\mu\mapsto C^{\tau}_{\mu}$ such that $W_2^2(\mu,\prox^{W}_{\tau \cF}(\mu))\leq C^{\tau}_{\mu}$.
Moreover 

\begin{itemize}
    \item[(1)] if $\cF$ is lower-bounded and $\arg \min \cF \neq \emptyset$ then $C^{\tau}_{\mu}\leq \dist(\mu,\arg\min \cF)$;
    \item[(2)] if $\mu \in D(\cF)$ then $C_{\mu}^{\tau} \leq \sqrt{2\tau} \cdot \sqrt{\cF(\mu)- \cF(\prox_{\tau \cF}^{W}(\mu))}$.
\end{itemize}
\end{lemma}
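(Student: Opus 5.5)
The plan is to compare the value of the proximal objective at $\tilde\mu$ with its value at the competitor $\mu_0$, and then control $\cF(\tilde\mu)$ from below through \eqref{eqn:quadraticbound}. Minimality of $\tilde\mu$ gives
\[
\cF(\tilde\mu)+\frac{1}{2\tau}W_2^2(\mu,\tilde\mu)\leq \cF(\mu_0)+\frac{1}{2\tau}W_2^2(\mu,\mu_0)\leq \cF(\mu_0)+\frac{r^2}{2\tau}.
\]
Combining this with $\cF(\tilde\mu)\geq A-\frac{1}{2\tau_*}W_2^2(\tilde\mu,\mu_0)$ and the triangle inequality $W_2(\tilde\mu,\mu_0)\leq W_2(\tilde\mu,\mu)+r$, we get, with $s:=W_2(\mu,\tilde\mu)$,
\[
\frac{s^2}{2\tau}-\frac{(s+r)^2}{2\tau_*}\leq \cF(\mu_0)-A+\frac{r^2}{2\tau}.
\]

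Next, expand $(s+r)^2\leq (1+\eps)s^2+(1+\eps^{-1})r^2$ and choose $\eps$ so that the coefficient of $s^2$ stays positive. A natural choice is $\eps=\frac{\tau_*-\tau}{2\tau}$. Then $\frac{1}{\tau}-\frac{1+\eps}{\tau_*}=\frac{\tau_*-\tau}{2\tau\tau_*}$, which is half the available gap. Multiplying through by $\frac{4\tau\tau_*}{\tau_*-\tau}$ gives $s^2\leq \frac{4\tau\tau_*}{\tau_*-\tau}(\cF(\mu_0)-A)+c\,r^2$ with an explicit constant $c$. The only step requiring care is this bookkeeping, so that $c$ matches $\frac{2\tau_*(\tau+\tau_*)}{(\tau_*-\tau)^2}$. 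The structure, namely a quadratic inequality in $s$ with positive leading coefficient, is straightforward. If the constant does not match exactly, one can either adjust $\eps$ or simply bound $\frac{1}{\tau}+\frac{1+\eps^{-1}}{\tau_*}$ from above. Local boundedness of $C^\tau_\mu$ follows at once, since the right-hand side depends only on $r$ and on $\mu_0$.

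For (1), use $\mu_*\in\argmin\cF$ as a competitor, exactly as in the proof of Corollary~\ref{cor:nualmostmini}. This gives $\cF(\tilde\mu)+\frac{1}{2\tau}W_2^2(\mu,\tilde\mu)\leq\cF(\mu_*)+\frac{1}{2\tau}W_2^2(\mu,\mu_*)$. Since $\cF(\mu_*)\leq\cF(\tilde\mu)$, it follows that $W_2(\mu,\tilde\mu)\leq W_2(\mu,\mu_*)$, and taking the infimum over $\mu_*$ yields the claim. For (2), use $\mu$ itself as a competitor: $\cF(\tilde\mu)+\frac{1}{2\tau}W_2^2(\mu,\tilde\mu)\leq\cF(\mu)$, so $W_2^2(\mu,\tilde\mu)\leq 2\tau(\cF(\mu)-\cF(\tilde\mu))$. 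Since $\cF(\tilde\mu)$ here plays the role of $\cF(\prox_{\tau\cF}^W(\mu))$, taking the square root gives (2).
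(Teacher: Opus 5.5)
Your proof is correct and is essentially the paper's argument. The paper compares with the competitor $\mu_0$ and invokes the a priori estimate (2.2.4) of Lemma 2.2.1 in \cite{AGS08}, whose proof is exactly your Young-inequality step with the same choice $\eps=\frac{\tau_*-\tau}{2\tau}$; items (1) and (2) are identical to yours. Your deferred bookkeeping gives $c=\frac{2(\tau^2+\tau_*^2)}{(\tau_*-\tau)^2}$, which is already $\le\frac{2\tau_*(\tau+\tau_*)}{(\tau_*-\tau)^2}$ since $\tau<\tau_*$, so no adjustment of $\eps$ is needed.
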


\begin{proof} First we notice that \eqref{eqn:quadraticbound} is equivalent to $\cF^{\tau}(\mu_0) \geq A$, where $\cF^{\tau}$ is the Moreau envelope defined in \eqref{eqn:moreau}. We can then apply \cite[Lemma~2.2.1]{AGS08}, and using $v=\tilde{\mu}$ in equation (2.2.4) of \cite{AGS08} gives us a bound on $W_2^2(\mu, \tilde{\mu})$: we can finally obtain \eqref{eqn:estimatertau} using that $\Phi(\mu,\tau; \tilde{\mu}) \leq \Phi(\mu,\tau;\mu_0)$. For the other two bounds:
\begin{itemize}
    \item[(1)] If $\arg\min \cF\neq \emptyset$, let $\mu_1 \in \arg\min \cF$. By the defintion of $\tilde{\mu} \in \prox_{\tau \cF}^W(\mu)$ we have
$$\cF(\tilde{\mu}) + \frac{ W_2^2(\mu, \tilde{\mu})}{2 \tau}  \leq \cF(\mu_1) + \frac{ W_2^2(\mu, \mu_1)}{2 \tau}.$$
Using now that $\cF(\tilde{\mu}) \geq \cF(\mu_1)$ we conclude that $W_2^2(\mu, \tilde{\mu}) \leq W_2^2(\mu,\mu_1)$; optimizing in $\mu_1$ and $\tilde{\mu}$ we get $C^{\tau}_{\mu} \leq \dist(\mu,\arg\min \cF)$.

\item[(2)]If $\mu\in D(\cF)$ then from the definition of $\prox_{\tau \cF}^W(\mu)$, taking $\mu$ as competitor:
$$\cF(\tilde{\mu})+\frac{1}{2\tau}W^{2}_2(\mu,\tilde{\mu})\leq \cF(\mu),
$$
which gives $C_{\mu}^{\tau} \leq \sqrt{2\tau} \cdot \sqrt{\cF(\mu)- \cF(\prox_{\tau \cF}^{W}(\mu))}$.
\end{itemize}

\end{proof}

\begin{theorem}\label{thm:hold-cont}
    Let $\cF:\P_2(\R^d)\to (-\infty, +\infty]$ be $\lambda$-convex along outer generalized geodesics, $\tau>0$ such that $\lambda \tau > -1$. Then there exists a locally bounded function $\mu\mapsto C^{\tau}_{\mu}$ such that for every $\mu_1, \mu_2 \in \P_2(\R^d)$
\begin{equation}\label{eqn:cbounded}
(1+\lambda \tau) \cdot W_2^2(\prox^{W}_{\tau \cF}( \mu_1) , \prox^{W}_{\tau \cF}(\mu_2) )\leq W_2^2(\mu_1,\mu_2) +  (C^{\tau}_{\mu_1}+C^{\tau}_{\mu_2}) \cdot W_2(\mu_1, \mu_2)%\quad \forall\,  \mu_1,\mu_2 \in \P_2(\R^d);
\end{equation}
in particular $\prox_{\tau \cF}^W$ is locally $\frac 12$-H\"{o}lder continuous. Moreover
\begin{itemize}
    \item[(i)] if $\argmin (\cF) \neq \emptyset$ then $C_{\mu} \leq d(\mu, \argmin{\cF}) $;
    \item[(ii)] if $\mu \in D(\cF)$ then $C_{\mu} \leq \sqrt{2\tau} \cdot \sqrt{\cF(\mu)- \cF(\prox_{\tau \cF}^{W}(\mu))}$;
    \item[(iii)] if $\mu \in D(\nabla^-\cF)$ then $C_{\mu} \leq \frac{\tau}{1+\lambda \tau} | \nabla^- \cF|(\mu)$.
\end{itemize} 
\end{theorem}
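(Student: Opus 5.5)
The plan is to avoid going through Theorem~\ref{thm:gg_wne}: the plan $\gamma$ there does not give an optimal coupling between $\mu_1$ and $\mu_2$, and that costs a constant error term instead of one proportional to $W_2(\mu_1,\mu_2)$. Instead I would use two separate first-order optimality conditions, each along an outer generalized geodesic with a different base. Set $\tilde\mu_i:=\prox_{\tau\cF}^W(\mu_i)$ and $C_i:=W_2(\mu_i,\tilde\mu_i)$; I take $C^\tau_\mu:=W_2(\mu,\prox^W_{\tau\cF}(\mu))$, as in Lemma~\ref{lem:bound}.

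\textbf{Step 1: the two variational inequalities.} By convexity along outer generalized geodesics with base $\mu_1$, there is a plan $P_1$ on $(\R^d)^3$ with coordinates $(x,\tilde x,\tilde y)$ such that $(x,\tilde x)$ is optimal between $\mu_1,\tilde\mu_1$, $(x,\tilde y)$ is optimal between $\mu_1,\tilde\mu_2$, and $\cF$ satisfies \eqref{eq:convgg} along $\mu_t=(\pi_t^{\tilde x\to\tilde y})_\#P_1$. Since $W_2^2(\mu_1,\mu_t)\le\int\|x-(1-t)\tilde x-t\tilde y\|^2dP_1$, I compare $\tilde\mu_1$ with $\mu_t$ in the minimization, divide by $t$ and let $t\to0$. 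This gives
\[
\cF(\tilde\mu_2)-\cF(\tilde\mu_1)\ge \tfrac1\tau\int\langle x-\tilde x,\tilde y-\tilde x\rangle\,dP_1+\tfrac\lambda2\int\|\tilde x-\tilde y\|^2dP_1 .
\]
Symmetrically, with base $\mu_2$ I get a plan $P_2$ with coordinates $(y,\tilde y,\tilde x)$ and the analogous inequality with the roles of the indices swapped. Summing the two inequalities cancels the $\cF$ terms.

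\textbf{Step 2: polarization.} This is the key algebraic trick. On $P_1$, the identity $\|x-\tilde y\|^2=\|x-\tilde x\|^2+2\langle x-\tilde x,\tilde x-\tilde y\rangle+\|\tilde x-\tilde y\|^2$, together with optimality of both $(x,\tilde x)$ and $(x,\tilde y)$, gives
\[
2\int\langle x-\tilde x,\tilde y-\tilde x\rangle dP_1=\int\|\tilde x-\tilde y\|^2dP_1+C_1^2-W_2^2(\mu_1,\tilde\mu_2),
\]
and the analogous identity holds on $P_2$. Substituting into the summed inequality and multiplying by $\tau$ yields
\[
\frac{1+\lambda\tau}{2}\Big(\int\|\tilde x-\tilde y\|^2dP_1+\int\|\tilde x-\tilde y\|^2dP_2\Big)\le \frac12\big(W_2^2(\mu_1,\tilde\mu_2)-C_1^2+W_2^2(\mu_2,\tilde\mu_1)-C_2^2\big).
\]
Since $1+\lambda\tau>0$ and each plan norm is at least $W_2^2(\tilde\mu_1,\tilde\mu_2)$, the left-hand side is at least $(1+\lambda\tau)W_2^2(\tilde\mu_1,\tilde\mu_2)$. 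This step is why negative $\lambda$ causes no trouble. On the right, the triangle inequality gives $W_2(\mu_1,\tilde\mu_2)\le W_2(\mu_1,\mu_2)+C_2$ and $W_2(\mu_2,\tilde\mu_1)\le W_2(\mu_1,\mu_2)+C_1$. After squaring, the terms $C_1^2,C_2^2$ cancel exactly, leaving $W_2^2(\mu_1,\mu_2)+(C_1+C_2)W_2(\mu_1,\mu_2)$. This is \eqref{eqn:cbounded}.

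\textbf{Step 3: the constant and its bounds.} To see that $C^\tau_\mu$ is locally bounded, I would use that $\lambda$-convexity along generalized geodesics implies a quadratic lower bound of the type \eqref{eqn:quadraticbound} for some $\tau_*>\tau$ (this is standard, cf.\ \cite{AGS08}, since $\lambda\tau>-1$), and then apply Lemma~\ref{lem:bound}. Local $\frac12$-H\"older continuity then follows by taking square roots in \eqref{eqn:cbounded}. Items (i) and (ii) are exactly points (1) and (2) of Lemma~\ref{lem:bound}. For (iii), I would invoke the slope estimate for the minimizing movement of a $\lambda$-convex functional, $W_2(\mu,\prox^W_{\tau\cF}(\mu))\le\frac{\tau}{1+\lambda\tau}|\nabla^-\cF|(\mu)$ (cf.\ \cite[Thm.~3.1.6/Lemma~3.1.3]{AGS08}). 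Alternatively, it can be derived directly by testing the minimality of $\tilde\mu$ along the geodesic from $\tilde\mu$ towards $\mu$.

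I expect the main obstacle to be Step 1. Passing to the limit $t\to0$ needs care when $\cF(\tilde\mu_2)$ or the relevant terms are infinite; they are not, since both are values of $\cF$ at prox points, so they are finite. One also has to check that the first-order expansion of the non-optimal cost $\int\|x-(1-t)\tilde x-t\tilde y\|^2dP_1$ is exactly what produces the inner product. Everything after that is bookkeeping.
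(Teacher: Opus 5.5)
Your proposal is correct and follows essentially the same route as the paper. Both arguments use two outer generalized geodesics between the prox points, with bases $\mu_1$ and $\mu_2$, invoke the minimality of $\prox^W_{\tau\cF}(\mu_i)$ for $\Phi_{\mu_i}$, sum the two inequalities, divide by $t$ and let $t\to0$, close with a triangle-inequality estimate, and then rely on Lemma~\ref{lem:bound}, the quadratic lower bound and the AGS slope estimate for the constant and items (i)--(iii). Your explicit polarization just re-derives the $1$-convexity of $\frac12 W_2^2(\cdot,\mu_i)$ along generalized geodesics with base $\mu_i$, which the paper invokes directly, and your bookkeeping has the small advantage of handling general $\lambda$ (via $\int\|\tilde x-\tilde y\|^2\,dP_i\ge W_2^2$) instead of only $\lambda=0$.
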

%\sara{Che cos'è $\nabla^-\cF$? $| \nabla^- \cF|(\mu)$ è quella che Carlen e Craig chiamano metric slope?}
%\simone{si, in AGS si chiama descending slope}

Notice that the Holderianity on metric balls, in a metric space is equivalent to local Holderianity.  
We recall that $\mu\mapsto \frac{W_2^2(\mu, \bar \mu)}{2}$ is $1$-convex along generalized geodesics with base point $\bar \mu$.
\begin{proof} For the first part we consider $\lambda=0$, the general case being quite similar. Let 
$$\Phi_{\mu}(\nu)\coloneqq  \cF(\nu) + \frac{1}{2\tau}W_2^2(\mu,\nu).$$
Let us moreover consider $(\rho^1_t)_{t\in [0,1]}$ the generalized geodesic with base point $\mu_1$ from $\prox^{W}_{\tau \cF}(\mu_1) $ to $\prox^{W}_{\tau \cF}(\mu_2) )$ and $(\rho^2_t)_{t\in [0,1]}$ the generalized geodesic with base point $\mu_2$ from $\prox^{W}_{\tau \cF}(\mu_1) $ to $\prox^{W}_{\tau \cF}(\mu )$. We know that $\nu \mapsto \Phi_{\mu_1}(\nu)$ and $\nu \mapsto \Phi_{\mu_2}(\nu)$ are $\frac{1}{\tau}$-convex respectively along $(\rho^1_t)_{t\in [0,1]}$ and $(\rho^2_t)_{t\in [0,1]}$. Thus for any $t\in [0,1]$ we have
 \begin{align*}
 \Phi_{\mu_1}(\rho^1_t)\leq(1-t) \Phi_{\mu_1}(\prox^{W}_{\tau \cF}(\mu_1))+t\Phi_{\mu_1}(\prox^{W}_{\tau \cF}(\mu_2))-\frac{t(1-t)}{2\tau}W_2^2(\prox^{W}_{\tau \cF}(\mu_1),\prox^{W}_{\tau \cF}(\mu_2)),\\
  \Phi_{\mu_2}(\rho^2_t)\leq(1-t) \Phi_{\mu_2}(\prox^{W}_{\tau \cF}(\mu_2))+t\Phi_{\mu_2}(\prox^{W}_{\tau \cF}(\mu_1))-\frac{t(1-t)}{2\tau}W_2^2(\prox^{W}_{\tau \cF}(\mu_1),\prox^{W}_{\tau \cF}(\mu_2)), 
 \end{align*} 
 that, when added, give 
 \begin{align*}
 \Phi_{\mu_1}(\rho^1_t)&+\Phi_{\mu_2}(\rho^2_t)\leq (1-t)\left( \Phi_{\mu_1}(\prox^{W}_{\tau \cF}(\mu_1)+\Phi_{\mu_2}(\prox^{W}_{\tau \cF}(\mu_2)))\right)\\& +t\left(\Phi_{\mu_1}(\prox^{W}_{\tau \cF}(\mu_2))+\Phi_{\mu_2}(\prox^{W}_{\tau \cF}(\mu_1))\right)-\frac{t(1-t)}{\tau}W_2^2(\prox^{W}_{\tau \cF}(\mu_1),\prox^{W}_{\tau \cF}(\mu_2)).
 \end{align*}
 By using that $\prox^{W}_{\tau \cF}(\mu_i)$ is minimizer of $\Phi_{\mu_i}$ for $i=1,2$, we have
 \begin{align*}
& \frac{t(1-t)}{\tau}W_2^2(\prox^{W}_{\tau \cF}(\mu_1),\prox^{W}_{\tau \cF}(\mu_2))\\
 &\leq t(\Phi_{\mu_2}(\prox^{W}_{\tau \cF}(\mu_1))-\Phi_{\mu_1}(\prox^{W}_{\tau \cF}(\mu_1)))+t(\Phi_{\mu_1}(\prox^{W}_{\tau \cF}(\mu_2))-\Phi_{\mu_2}(\prox^{W}_{\tau \cF}(\mu_2))).
 \end{align*}
Dividing by $t$ and sending $t\to  0$,
 \begin{align*}
 &W_2^2(\prox^{W}_{\tau \cF}(\mu_1),\prox^{W}_{\tau \cF}(\mu_2))\\&\leq \frac{1}{2}(W_2^2(\prox^{W}_{\tau \cF}(\mu_1),\mu_2)-W_2^2(\prox^{W}_{\tau \cF}(\mu_1),\mu_1)+W_2^2(\prox^{W}_{\tau \cF}(\mu_2), \mu_1)-W_2^2(\prox^{W}_{\tau \cF}(\mu_2), \mu_2))\\ &\leq \frac{W_2(\mu_1,\mu_2)}{2} \left(W_2(\prox^{W}_{\tau \cF}(\mu_1),\mu_2)+W_2(\prox^{W}_{\tau \cF}(\mu_1),\mu_1)+W_2(\prox^{W}_{\tau \cF}(\mu_2), \mu_1)+W_2(\prox^{W}_{\tau \cF}(\mu_2), \mu_2)\right)\\ &\leq {W_2(\mu_1,\mu_2)}\left(W_2(\mu_1,\mu_2)+W_2(\prox^{W}_{\tau \cF}(\mu_1),\mu_1)+W_2(\prox^{W}_{\tau \cF}(\mu_2), \mu_2)\right). 
 \end{align*}

In particular we have that \eqref{eqn:cbounded} holds true for $C_{\mu}^{\tau}:=W_2(\mu,\prox_{\tau \cF}^W(\mu))$; we can now apply Lemma \ref{lem:bound} since geodesically  $\lambda$-convex functionals in $\P_2(\R^d)$ are lower bounded by a quadratic function of the distance e.g. %\cite[Theorem 4.3]{NaSa} and
\cite[Lemma 2.17]{MuSa} or \cite[Lemma~2.4.8]{AGS08}(in particular \eqref{eqn:quadraticbound} holds true for every $\tau_* < \frac 1{\lambda_-}$, and so we can choose $\tau_*>\tau$). We conclude that $C_{\mu}^\tau$ is locally bounded, thus the first part of the statement, but also (i) and (ii) follow respectively from Lemma~\ref{lem:bound} (1) and (2). The estimate in (iii) is instead proven for example in \cite[Lemma~3.1.6~(i)]{AGS08}.
% \begin{itemize}
%     \item[(iii)] We will prove $ W_2(\mu, \prox_{\tau \cF}^W(\mu)) \leq\frac{ \tau}{1+\lambda \tau} |\nabla^- \cF|(\mu)$.  It even though it is quite classical for a $\lambda$-geodesically convex functional we couldn't find a reference. Let us define $D:= W_2(\mu, \prox_{\tau \cF}^W(\mu))$ and consider the geodesic $\mu_t$ between $\mu$ and $\prox_{\tau \cF}^W(\mu)$ along which $\cF$ is $\lambda$-convex. Let $f(t)=\cF(\mu_t)$; we know that $g(t) = f(t) + \frac {t^2D^2}{2\tau}$ attains its minimum in $t=1$ thus $g'_-(1)=f'_-(1)+ \frac{D^2}{\tau} \leq 0$. In particular $D^2 \leq - \tau f'_-(1)$; by the $\lambda D^2$-convexity of $f$ we have $f'_+(0) \leq  f'_-(1) - \lambda D^2$ and by definition of descending slope
% $$ |\nabla^- \cF|(\mu)\geq \limsup_{t\to 0} \frac{ (\cF(\mu)-\cF(\mu_t))_+ }{W_2(\mu, \mu_t)} = \limsup_{t\to 0} \frac{ (f(0)-f(t))_+ }{D \cdot  t} = - \frac{f'_+(0)}D.$$
% Putting everything together $D^2 \leq- \tau f'_-(1) \leq -\tau (f'_+(0)+\lambda D^2) \leq \tau D \cdot(|\nabla^- \cF|(\mu)-\lambda D) $ thus $(1+\lambda \tau) D \leq \tau |\nabla^- \cF|(\mu)$ and we can conclude
% $$C_{\mu}^{\tau}=W_2(\mu,\prox_{\tau \cF}^W(\mu)) =D \leq  \frac{\tau}{1+\lambda \tau} |\nabla^- \cF|(\mu).$$
% \end{itemize}  
\end{proof}

%%%%%%%%%%%%%%%%%%%%%%%%%%%%%%%%%%%%%
%%%%%%%%%%%%%% Aknow %%%%%%%%%%%%%%%%%%
%%%%%%%%%%%%%%%%%%%%%%%%%%%%%%%%%
\section*{Acknowledgments}
The authors thank Alessandro Pinzi for pointing out the reference \cite{Be}.\\
The authors are members of GNAMPA at INdAM. The authors are supported by the INdAM-GNAMPA project “Geometria di spazi di misure: distanze di Wasserstein, loro varianti e applicazioni”, CUP E53C25002010001.  The authors aknowledge the support of  PRIN 202244A7YL. S. D. M. and E. N. acknowledge the financial support of the US Air Force Office of Scientific Research (FA8655-22-1-7034) and of the MUR Excellence Department Project awarded to Dipartimento di Matematica, Universita di Genova, CUP D33C23001110001.   S.F. acknowledges the support
of the European Commission, grant TraDE-OPT 861137 and of the COST Action 24122 mSPACE, supported by COST (European Cooperation in Science and Technology), www.cost.eu. \\
This work represents only the view of the authors. The European Commission and the other organizations are not responsible for any use that may be made of the information it contains.

\bibliographystyle{abbrv}
\bibliography{bib}

%\section*{\textcolor{red}{Cose da Fare}}
%\begin{itemize}
    % \item SARA:\sout{mettere le notazioni in verticale}
    % \item SARA: \sout{aggiungere $3$-plan e altre cose di interpolazione nelle notazioni}
    % \item SIMO: \sout{uniformare $\mathcal{F}$ che è $\cF$}
    % \item SIMO: \sout{citazione per proiezione unica} Aggiunta la citazione, ma ci serve come lemma?
    % \item {\color{green}EMA: Citazione per contrattività o meno per $\lambda$ generali: \cite[Proposition 3.3]{HoheiselLabordeOberman2020}}
    % \item EMA: \sout{Controllare AGS e commenti sul dominio di G (nella generalized convexity)}
    % \item \sout{EMA: citazione continuità prox. HERE:} \cite[Theorem 4.1.2 (i)]{AGS08}
    % \item SIMO: \sout{Aggiungere $\lambda$-convexity nelle definizioni in notazioni}
    % \item SIMO: \sout{citare sara mm}
    % \item SIMO: \sout{sistemare l'ultima sezione}
    % \item SARA: \sout{cambiare il nome dell'insieme dei lower bounds controllare che è fatto dappertutto (in definizione ancora $\leq_C$)}
    % \item SARA: \sout{controllare il remark dopo l'esempio  entropia non convessa su 2-base gg}
    % \item SARA:\sout{scrivere la fine della proof che combaci con lo statement nell intro e cambiare lo statement}
    % \item SIMO: \sout{intro Sezione 3}
     %\item
     
     %Tutto in $\R^d$: cambiamo in Hilbert o mettiamo un remark che dice che è uguale? (controllare dove si usa la misura di Lebesgue)
%\end{itemize}
\end{document}